\documentclass[11pt]{article}
\usepackage[a4paper,margin=27mm]{geometry}
\usepackage{amsmath,amssymb,amsthm,mathtools}
\usepackage{enumitem}
\usepackage{needspace}
\usepackage{microtype}
\usepackage{hyperref}
\hypersetup{
  colorlinks=true,
  linkcolor=blue,
  citecolor=blue,
  urlcolor=blue,
  pdftitle={Type Annihilation for Classifying Maps of Rack Spaces},
  pdfauthor={Takefumi Nosaka},
  pdfkeywords={rack, quandle, rack space, classifying map, associated group, rack homology, group homology, type of a rack}
}

\newtheorem{theorem}{Theorem}[section]
\newtheorem{proposition}[theorem]{Proposition}
\newtheorem{lemma}[theorem]{Lemma}
\newtheorem{corollary}[theorem]{Corollary}
\newtheorem{remark}[theorem]{Remark}

\DeclareMathOperator{\As}{As}
\DeclareMathOperator{\Inn}{Inn}
\DeclareMathOperator{\Stab}{Stab}
\DeclareMathOperator{\Tor}{Tor}
\DeclareMathOperator{\sgn}{sgn}
\DeclareMathOperator{\Type}{Type}
\DeclareMathOperator{\rank}{rank}
\DeclareMathOperator{\Ker}{Ker}
\DeclareMathOperator{\Coker}{Coker}
\newcommand{\ZZ}{\mathbb Z}
\newcommand{\QQ}{\mathbb Q}
\newcommand{\OO}{\mathcal O}
\newcommand{\tB}{\widetilde B}
\newcommand{\tkap}{\widetilde\kappa}
\newcommand{\tF}{\widetilde F}
\newcommand{\tE}{\widetilde E}
\newcommand{\tP}{\widetilde P}
\newcommand{\td}{\widetilde\partial}
\newcommand{\gr}{\mathrm{gr}}
\newcommand{\R}{\mathrm R}
\newcommand{\Q}{\mathrm Q}
\newcommand{\D}{\mathrm D}
\newcommand{\ab}{\mathrm{ab}}

\title{Type Annihilation for Classifying Maps of Rack Spaces}
\author{Takefumi Nosaka}
\date{}

\begin{document}
\maketitle

\begin{abstract}
We study the classifying map $c\colon BX\to K(\As(X),1)$ of a rack $X$ of finite type. Let $t = \Type(X)$. We prove that $t c_{n*}=0$ for every $n\ge2$ when $X$ is connected, and that $t^{n-1}c_{n*}=0$ on the torsion subgroup $\Tor H_n^\mathbb{R}(X)$ without any connectedness assumption. For a finite rack, under our sign conventions, the rationalized classifying map in degree $n$ is given by $(-1)^n$ times the canonical projection from the $n$-fold tensor power of the orbit module to its $n$-th exterior power. For an arbitrary rack of finite type, we determine $H_2^{\mathrm{gr}}(\As(X);\mathbb{Z}[1/t])$. We also derive low-dimensional applications to symplectic and Alexander structures.
\end{abstract}
\medskip
\noindent\textbf{2020 Mathematics Subject Classification.}
Primary 57K12; Secondary 20J05, 55N35, 55R35.

\smallskip
\noindent\textbf{Key words.}
rack, quandle, rack space, classifying map, associated group, rack homology,
group homology, type of a rack.

\section{Introduction}

Rack and quandle homology are fundamental tools connecting self-distributive algebraic structures with knot theory \cite{CJKS,FRS}. The purpose of this paper is to describe, in terms of the type and orbit structure of a rack, the image of the canonical map from rack homology to the group homology of its associated group.

A \emph{rack} is a set $X$ equipped with a binary operation $x\lhd y$ such that every right translation $R_y(x)=x\lhd y$ is bijective and the right self-distributive law
$(x\lhd y)\lhd z=(x\lhd z)\lhd(y\lhd z)$ holds. A rack satisfying $x\lhd x=x$ is called a \emph{quandle}. Its associated group is defined by
\begin{equation}
 \As(X)=\bigl\langle e_x\ (x\in X)\ \bigm|\
 e_xe_y=e_y e_{x\lhd y}\bigr\rangle
 \label{eq:associated-group-intro}
\end{equation}
and acts on $X$ from the right by $x\cdot e_y=x\lhd y$. We write $\OO(X)$ for the set of orbits of this action and call $X$ \emph{connected} if $|\OO(X)|=1$.

Fenn--Rourke--Sanderson constructed, for each rack $X$, a connected cubical CW complex $BX$ having one $n$-cube for each $(x_1,\ldots,x_n)\in X^n$ \cite{FRSTrunks,FRS}. With suitable choices of orientations, its cellular chain complex is naturally identified with the rack chain complex, and
\[
        \pi_1(BX)\cong\As(X),
        \qquad
        H_n(BX;\ZZ)\cong H_n^\R(X).
\]
For the construction of quandle spaces adapted to quandle homology, see \cite{NosakaQuandleSpace,NosakaHomotopical}. Even when $X$ is a quandle, the notation $BX$ in this paper always denotes the rack space of the underlying rack. The canonical classifying map induced by the isomorphism on fundamental groups,
\begin{equation}
        c\colon BX\longrightarrow K(\As(X),1)
        \label{eq:classifying-map-intro}
\end{equation}
induces homomorphisms $c_{n*}\colon H_n^\R(X)\to H_n^{\gr}(\As(X))$. This is the canonical map from $BX$ to its first Postnikov stage and measures the part of rack homology detected by the group homology of the associated group.

The \emph{type} of a rack $X$ is defined by
\[
 \Type(X)=\min\{t>0\mid R_y^t=\operatorname{id}_X\ (\forall y\in X)\}
\]
when such a positive integer exists; in that case, $X$ is said to be of finite type. In general, this invariant differs from the rack rank, which is defined as the order of the kink map $x\mapsto x\lhd x$ \cite{ElhamdadiNelsonRank}. For a connected quandle of finite type, the author proved that $t c_{n*}=0$ in degrees $2$ and $3$, and subsequently asked whether this extends to all degrees and to nonconnected quandles \cite[Theorem~6.1 and the problem immediately following it]{NosakaAdjoint}. Our first main result extends the vanishing theorem to all degrees for connected racks of finite type and gives a uniform bound on the torsion part without assuming connectedness.

\Needspace{13\baselineskip}
\begin{theorem}[Main theorem]
\label{thm:main}
Let $X$ be a rack with $\Type(X)=t<\infty$.
\begin{enumerate}[label=(\arabic*)]
\item If $X$ is connected, then, for every $n\ge2$,
\[
        t\,c_{n*}=0
        \colon H_n^\R(X)\longrightarrow H_n^{\gr}(\As(X)).
\]
\item Without assuming that $X$ is connected, for every $n\ge1$ and every
$\alpha\in\Tor H_n^\R(X)$,
\[
        t^{n-1}c_{n*}(\alpha)=0.
\]
\end{enumerate}
\end{theorem}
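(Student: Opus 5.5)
The plan rests on one observation: in $\As(X)$ the element $e_y^t$ is central for every $y\in X$. Indeed, applying the defining relation $t$ times gives $e_xe_y^t=e_y^te_{x\lhd^t y}$, where $x\lhd^t y=R_y^t(x)$, and $R_y^t=\mathrm{id}$ turns this into $e_xe_y^t=e_y^te_x$. Write $z_y=e_y^t$. Conjugate generators have conjugate, hence equal, central powers. So $z_y$ depends only on the orbit of $y$, and when $X$ is connected there is a single central element $z$.

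For a central element $z$ of a group $G$, the homomorphism $\ZZ\times G\to G$, $(k,g)\mapsto z^kg$, induces a Pontryagin-type product
\[
\mu_z\colon H_{n-1}^{\gr}(G)\to H_n^{\gr}(G),\qquad \beta\mapsto [z]\times\beta .
\]
It lifts to an explicit chain-level operation on the bar complex, $[g_1|\cdots|g_{n-1}]\mapsto\sum\pm[\cdots|z|\cdots]$, the sum running over shuffles of $z$ into the bar word.

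\textbf{Step 1: chain formula for $c$.} I would write $c$ at chain level as a chain map $C_n^\R(X)\to C_n^{\gr}(\As(X))$. It sends $(x_1,\ldots,x_n)$ to the signed sum over permutations that realizes the cubical cell as a union of simplices. Each cube face carries a vertex labelled by a product of the $e_{x_i}$, with the appropriate action on later coordinates.

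\textbf{Step 2: key lemma.} Next I would prove that for every chain $(x_1,\ldots,x_n)$,
\[
t\,c_n(x_1,\ldots,x_n)\;\equiv\;\sum_{i}\pm\,\mu_{z_{x_i}}\bigl(c_{n-1}(x_1,\ldots,\widehat{x_i},\ldots,x_n)\bigr)\pmod{\text{boundaries and terms built from } \partial^\R}.
\]
To build the homotopy, I would replace the edge $e_{x_1}$ by the sum of the $t$ edges $e_{x_1}$, $e_{x_1\lhd x_1}$, and so on. Gluing these gives a cube whose first edge is labelled $z_{x_1}$. The periodicity $R_{x_1}^t=\mathrm{id}$ makes the opposite faces of this long cube coincide, so its boundary reduces to the shuffle of $z_{x_1}$ with $c_{n-1}$ of the remaining coordinates, plus $t$ times the original boundary terms. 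Applied to a rack cycle $\alpha$, the formula reads $t\,c_{n*}(\alpha)=\sum_{O\in\OO(X)}\mu_{z_O}(\beta_O)$, where $\beta_O\in H_{n-1}^{\gr}$ is obtained from $\alpha$ by the rack-level ``deletion of an entry in orbit $O$'' map. That deletion map should be a chain map $C_n^\R\to C_{n-1}^\R$ up to sign, since the rack boundary respects orbits.

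\textbf{Step 3: conclude (1).} If $X$ is connected there is one orbit and one $z$, so $t\,c_{n*}(\alpha)=\mu_z(c_{n-1*}(D\alpha))$, where $D(x_1,\ldots,x_n)=\sum_i(-1)^i(x_1,\ldots,\widehat{x_i},\ldots,x_n)$. I would then show that $D$ is null-homotopic on rack chains, or that $\sum_i(-1)^i$ collapses to a rack boundary once all $x_i$ are identified up to the action. Connectedness should make the two face maps $d_i^0$ and $d_i^1$ agree after applying $c$, so $D$ lands in boundaries and the right-hand side vanishes for $n\ge2$. Degree $1$ is excluded because $D$ does not collapse there.

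\textbf{Step 4: conclude (2).} Here I would argue by induction on $n$. The base case $n=1$ is trivial, as $t^0c_{1*}(\alpha)=c_{1*}(\alpha)$ and $H_1^\R(X)=\ZZ^{\OO(X)}\cong H_1(\As(X))$ is torsion-free, so $\Tor H_1^\R(X)=0$. For the inductive step, suppose $\alpha$ is torsion, with $m\alpha=0$. Then each $\beta_O$ comes from a class $D_O\alpha$ that is killed by $m$, hence is torsion in $H_{n-1}^\R$. The induction gives $t^{n-2}c_{n-1*}(D_O\alpha)=0$, hence
\[
t^{n-1}c_{n*}(\alpha)=\sum_O\mu_{z_O}\bigl(t^{n-2}c_{n-1*}(D_O\alpha)\bigr)=0.
\]

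\textbf{Main obstacle.} I expect the main difficulty to be Step 2: checking the long-cube homotopy with correct signs and verifying that the deletion maps $D_O$ are genuinely chain maps on $C^\R_*$. I would first test the lemma in degree $2$ against the known result of \cite{NosakaAdjoint}.
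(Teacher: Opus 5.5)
The genuine gap is in Step~3, the connected case. The construction you describe in Step~2 (subdividing the first edge into $t$ copies of $e_{x_1}$ and coning off to a single edge labelled $z_{x_1}$) is the paper's fan-chain edge-replacement identity. It produces only the $i=1$ term: on chains whose first entry lies in a fixed orbit $O$, with $z=z_O$, one gets $t\,c_{n*}(\alpha)=\pm\mu_z\bigl(c_{n-1*}(p_*\alpha)\bigr)$, where $p$ deletes the first coordinate. Your displayed sum over all $i$ is false as stated. For the trivial rack on $\{x,y\}$ (so $t=1$ and $\As=\ZZ^2$), take the cycle $(x,y)$: the left side is a generator of $H_2^{\gr}(\ZZ^2)\cong\ZZ$, while the right side is a sum of two terms $\pm e_x\wedge e_y$.

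In Step~3 you need the suspension term to vanish, and your proposed mechanism (the deletion map is null-homotopic, or lands in boundaries) fails. For a quandle, the degeneracy insertion $j_n(y_1,\ldots,y_{n-1})=(y_1,y_1,y_2,\ldots,y_{n-1})$ is a chain map up to sign with $p_nj_n=\mathrm{id}$, so $p_*$ is split surjective. Already $p_*[(x,x)]=[x]$ generates $H_1^\R(X)\cong\ZZ$. Your alternating $D$ fails too: for the one-point quandle, $D_*[(x,x,x)]=-[(x,x)]\neq0$. A degree-$2$ test will not reveal this, because there the suspension term is $\langle a^t,a\rangle_{\mathrm P}=t\langle a,a\rangle_{\mathrm P}=0$ (with $a=e_x$) for trivial reasons.

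The missing idea is to use $j_n$ against $c$ rather than to try to kill the deletion map. The simplices of the classifying chain on a tuple with $x_1=x_2$ cancel in pairs under $\sigma\mapsto(1\,2)\sigma$, so $c_nj_n=0$ at chain level. For a rack cycle $\xi$ of a connected quandle, set $\zeta=\xi-j_np_n\xi$. Then $\zeta$ is a cycle with $c_n\zeta=c_n\xi$ and $p_n\zeta=0$. Applying the edge-replacement identity to $\zeta$ gives $t\,c_n(\xi)=\partial K_n(\zeta)$, with no suspension term at all.

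This trick needs $j_n$ to be a chain map, which holds only for quandles, since the $i=2$ face requires $y\lhd y=y$. So a general connected rack must first be replaced by its quandle reflection $X^{\mathrm q}$. This quotient is connected, has type dividing $t$, and has the same associated group, so naturality of the classifying chain transfers the result back to $X$. Neither the degeneracy trick nor the reduction to quandles appears in your plan, and without them part~(1) is not proved.

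Part~(2) is unaffected once Step~2 is corrected to the first-coordinate version. There the suspension term is killed by the induction rather than eliminated, and your Step~4 is then exactly the paper's argument.
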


The key ingredient in the proof is an edge-replacement identity for the explicit chain $\kappa_*$ representing the classifying map. For each orbit $X_\lambda$ and each $x\in X_\lambda$, the element $s_\lambda=(e_x)^t$ is central. On chains whose first coordinate lies in $X_\lambda$, the chain $t\kappa_n$ is related by a chain homotopy to the negative of the corresponding central-suspension term determined by $s_\lambda$. This identity is obtained from the shuffle product and a fan chain in the homogeneous bar complex. For a connected quandle, a degeneracy insertion eliminates the suspension term; a general connected rack is reduced to this case by passing to the quandle reflection, which does not change the associated group. In the nonconnected case, we use the decomposition by the orbit of the first coordinate and induction on the degree.

Whereas the main theorem captures the torsion behavior governed by the type, the classifying map on the free part of a finite rack depends, rationally, only on the orbit set. Let $A_X=\ZZ[\OO(X)]$ be the free $\ZZ$-module on the orbit set. The standard rational identifications are
\[
 H_n^\R(X;\QQ)\cong(A_X\otimes\QQ)^{\otimes n},
 \qquad
 H_n^{\gr}(\As(X);\QQ)\cong\Lambda^n(A_X\otimes\QQ).
\]
Under these identifications, Theorem~\ref{thm:finite-rack-free-image} identifies the induced map as
\[
        c_{n*}\otimes\operatorname{id}_{\QQ}=(-1)^n\operatorname{Alt}_n
        \colon (A_X\otimes\QQ)^{\otimes n}
        \longrightarrow\Lambda^n(A_X\otimes\QQ).
\]
Thus, if $|X| < \infty$ and $m=|\OO(X)|$, the image of the induced map on torsion-free quotients has rank $\binom{m}{n}$. Moreover, for a finite rack with homogeneous orbits, the explicit cycles of Litherland--Nelson yield a concrete description of the image of a finite-index sublattice of the free quotient \cite{EtingofGrana,LitherlandNelson}. In this sense, the type controls the torsion behavior of the classifying map, whereas the orbit structure controls its rational free part.

In degree $2$, the edge-replacement identity and the orbit factorization show that the orbit-degree homomorphism, which sends each generator $e_x$ to its orbit, induces the canonical isomorphism established in Theorem~\ref{thm:H2-away-from-type}:
\[
 H_2^{\gr}\bigl(\As(X);\ZZ[1/t]\bigr)
 \cong
 \Lambda^2_{\ZZ[1/t]}\bigl(\ZZ[1/t][\OO(X)]\bigr).
\]
We also prove that the composite of the classifying map with any group homomorphism annihilating all elements $(e_x)^t$ is itself annihilated by $t$ on the whole of rack homology. In particular, this applies to the natural homomorphism from $\As(X)$ to the inner automorphism group $\Inn(X)=\langle R_x\mid x\in X\rangle$. Applied to Coxeter quandles, it gives another proof that $2H_2^{\gr}(W)=0$ for every finite-rank Coxeter group $W$; see Corollary~\ref{cor:coxeter-H2} and \cite{AkitaCoxeter,HowlettCoxeter}. We further determine $\pi_2(BX)$ for coefficient-contracted symplectic quandles over finite fields (Theorem~\ref{thm:symp-pi2}) and explicitly compute $H_2^{\gr}(\As(X))$ for connected Alexander quandles for which $\mathrm{id}_M-T$ is invertible (Theorem~\ref{thm:alexander-h2-associated}). The relevant examples are defined in the sections in which they are used.

The paper is organized as follows. Section~2 reviews the rack and quandle chain complexes, an explicit chain model for the classifying map, and the quandle reflection. Section~3 establishes the edge-replacement identity by means of the shuffle product and fan chains, and Section~4 proves the main theorem. Section~5 describes the free part for finite racks. Section~6 treats homomorphisms that kill the central elements arising from the type, the application to Coxeter groups, and the second group homology after inverting $t$. Sections~7 and~8 contain low-dimensional calculations for symplectic and Alexander quandles, respectively.

\section{Chain complexes and a chain model for the classifying map}

In this section, we fix the conventions and tools needed to treat the classifying map at the chain level. This section is primarily expository. In Subsection~\ref{subsec:rack-quandle-complexes}, we fix the rack and quandle chain complexes and their sign conventions, and introduce the homogeneous bar complex and its coinvariant complex. In Subsection~\ref{subsec:kappa-chain}, we rewrite Kabaya's explicit chain in homogeneous form and define the classifying chain $\kappa_\bullet$. Finally, Subsection~\ref{subsec:quandle-reflection} introduces the quandle reflection, which reduces a general rack to a quandle, and establishes the naturality of the classifying chain. We follow Fenn--Rourke--Sanderson for rack spaces and classifying maps \cite{FRS}.

\subsection{Rack and quandle complexes and the homogeneous bar complexes of groups}
\label{subsec:rack-quandle-complexes}

For a rack $X$, set $X^0=\{*\}$ and define $C_n^\R(X)=\ZZ[X^n]$ for $n\ge0$. In particular, $C_0^\R(X)=\ZZ[*]\cong\ZZ$. For $n\ge1$, the boundary homomorphism $\partial_n^\R\colon C_n^\R(X)\to C_{n-1}^\R(X)$ is defined by
\[
\partial_n^\R(x_1,\ldots,x_n)
=
\sum_{i:\,1\le i\le n}(-1)^i
\bigl(d_i^1(x_1,\ldots,x_n)-d_i^0(x_1,\ldots,x_n)\bigr),
\]
where
\[
\begin{aligned}
 d_i^0(x_1,\ldots,x_n)
 &= (x_1,\ldots,\widehat{x_i},\ldots,x_n),\\
 d_i^1(x_1,\ldots,x_n)
 &= (x_1\lhd x_i,\ldots,x_{i-1}\lhd x_i,x_{i+1},\ldots,x_n).
\end{aligned}
\]
For $i=1$, there are no preceding coordinates, so $d_1^1=d_1^0$. Hence $\partial_1^\R=0$, and with our convention $\partial_2^\R(x,y)=(x\lhd y)-(x)$.

Suppose that $X$ is a quandle. For $n\ge2$, let $C_n^\D(X)$ be the subgroup generated by those $(x_1,\ldots,x_n)$ for which $x_i=x_{i+1}$ for some $i$, and put $C_0^\D(X)=C_1^\D(X)=0$. These groups form a subcomplex. The quandle chain complex is $C_n^\Q(X)=C_n^\R(X)/C_n^\D(X)$, and the corresponding homology groups are denoted by $H_n^\R(X)$ and $H_n^\Q(X)$, respectively. For the standard definitions of rack and quandle homology and their applications to quandle cocycle invariants, see, for example, Carter--Jelsovsky--Kamada--Langford--Saito \cite{CJKS}.

The presentation~\eqref{eq:associated-group-intro} is equivalent to $e_y^{-1}e_xe_y=e_{x\lhd y}$. Consequently, $e_{x\cdot g}=g^{-1}e_xg$ for every $g\in\As(X)$. For an orbit $\lambda\in\OO(X)$, we denote the corresponding subset of $X$ by $X_\lambda$.

Meanwhile, for a group $G$, put $\tB_m(G)=\ZZ[G^{m+1}]$ for $m\ge0$, and write its generators as $\langle g_0,\ldots,g_m\rangle$. For $m\ge1$, define
\[
\td_m\langle g_0,\ldots,g_m\rangle
        =
        \sum_{r:\,0\le r\le m}(-1)^r
        \langle g_0,\ldots,\widehat{g_r},\ldots,g_m\rangle
\]
and set $\td_0=0$. The group $G$ acts by simultaneous left translation. Regard $\ZZ$ as a trivial right $\ZZ G$-module and set $B_m^{\gr}(G)=\ZZ\otimes_{\ZZ G}\tB_m(G)$. We write $\varpi_m\colon\tB_m(G)\to B_m^{\gr}(G)$ for the natural projection and $\partial_m^{\gr}$ for the induced boundary.

The standard $\ZZ G$-linear isomorphism with the unnormalized inhomogeneous bar complex is
\[
\langle g_0,\ldots,g_m\rangle
 \longleftrightarrow
 g_0[\,g_0^{-1}g_1\mid g_1^{-1}g_2\mid\cdots\mid g_{m-1}^{-1}g_m\,].
\]
Thus, after passing to coinvariants, it becomes
\[
\varpi_m\langle g_0,\ldots,g_m\rangle
 \longleftrightarrow
 [\,g_0^{-1}g_1\mid g_1^{-1}g_2\mid\cdots\mid g_{m-1}^{-1}g_m\,].
\]
The normalized inhomogeneous bar complex is obtained from this unnormalized complex by quotienting out the degenerate chains containing the identity element.

\subsection{The classifying chain \texorpdfstring{$\kappa_n$}{kappa-n}}
\label{subsec:kappa-chain}

Kabaya constructed an explicit chain map from the rack complex to the inhomogeneous bar complex, and the author  observed that the induced homomorphism on homology is the classifying map~\eqref{eq:classifying-map-intro} \cite[\S8.4]{Kabaya}; see also \cite[\S6]{NosakaAdjoint}. We now put $G=\As(X)$, rewrite this chain in homogeneous form, and fix a lift before passing to coinvariants.

In degree zero, set $\tkap_0(*)=\langle1\rangle$ and $\kappa_0(*)=\varpi_0\langle1\rangle$. Let $n\ge1$ and $\mathbf x=(x_1,\ldots,x_n)\in X^n$. For $I=\{i_1<\cdots<i_k\}\subset\{1,\ldots,n\}$, put $g_I(\mathbf x)=e_{x_{i_1}}\cdots e_{x_{i_k}}$ and $g_\varnothing(\mathbf x)=1$, with the product taken in increasing order of the indices. Define the chain $\tkap_n(\mathbf x)\in\tB_n(G)$ before passing to coinvariants by
\begin{equation}
\tkap_n(\mathbf x)
=
(-1)^n
\sum_{\sigma\in S_n}\sgn(\sigma)
\Bigl\langle
        g_\varnothing,
        g_{\{\sigma(1)\}},
        g_{\{\sigma(1),\sigma(2)\}},
        \ldots,
        g_{\{1,\ldots,n\}}
\Bigr\rangle
\label{eq:kappa-definition}
\end{equation}
and set $\kappa_n(\mathbf x)=\varpi_n(\tkap_n(\mathbf x))\in B_n^{\gr}(G)$. For example, in the normalized inhomogeneous bar complex,
\[
 \kappa_1(x)=-[e_x],
 \qquad
 \kappa_2(x,y)=[e_x\mid e_y]-[e_y\mid e_{x\lhd y}].
\]
Under the standard homogeneous--inhomogeneous identification, $\tkap_n$ is $(-1)^n$ times Kabaya's chain; this factor compensates for the difference between his boundary convention and ours.

\begin{proposition}[Boundary formula for the classifying chain]
\label{prop:kappa-boundary}
Let $X$ be a rack. For every $n\ge1$, before passing to coinvariants one has
\begin{equation}
\td_n\tkap_n(\mathbf x)
=
\sum_{i:\,1\le i\le n}(-1)^{i+1}\tkap_{n-1}(d_i^0\mathbf x)
+
\sum_{i:\,1\le i\le n}(-1)^i e_{x_i}\,\tkap_{n-1}(d_i^1\mathbf x)
\label{eq:kappa-homogeneous-boundary}
\end{equation}
Consequently, after passing to coinvariants,
\begin{equation}
        \partial_n^{\gr} \circ  \kappa_n
        =
        \kappa_{n-1} \circ \partial_n^\R
\label{eq:kappa-chain-map}
\end{equation}
Moreover, if $X$ is a quandle, then $\kappa_\bullet$ maps the degenerate subcomplex $C_*^\D(X)$ to zero.
\end{proposition}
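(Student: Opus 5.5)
We verify \eqref{eq:kappa-homogeneous-boundary} by computing $\td_n$ term by term on the defining sum \eqref{eq:kappa-definition}. Fix $\sigma\in S_n$. For $0\le r\le n$ write $I_r=\{\sigma(1),\ldots,\sigma(r)\}$, so the $\sigma$-term is the simplex $\langle g_{I_0},\ldots,g_{I_n}\rangle$.

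The faces $1\le r\le n-1$ come first. Deleting $g_{I_r}$ gives a simplex that also occurs for $\sigma\circ(r\ r{+}1)$. That permutation has the opposite sign and the same sequence with index $r$ removed. Hence all interior faces cancel in pairs, and only the faces $r=0$ and $r=n$ survive.

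Next consider the face $r=n$, which deletes $g_{\{1,\ldots,n\}}$. Group the permutations by $i=\sigma(n)$. The remaining simplex is $\langle g_{J_0},\ldots,g_{J_{n-1}}\rangle$, where the $J_k$ form a maximal flag in $\{1,\ldots,n\}\setminus\{i\}$. This is exactly a term of $\tkap_{n-1}(d_i^0\mathbf x)$, since $g_J(\mathbf x)=g_{J'}(d_i^0\mathbf x)$ after reindexing. The sign bookkeeping is as follows.
\begin{itemize}
\item The sign of $\sigma$ equals $(-1)^{n-i}$ times the sign of the induced permutation on $n-1$ letters.
\item The face $r=n$ contributes $(-1)^n$.
\item The prefactor changes from $(-1)^n$ to $(-1)^{n-1}$.
\end{itemize}
Together these give $(-1)^{i+1}$.

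The face $r=0$ is the one I expect to be the main obstacle. It deletes $g_\varnothing=1$, and we group by $i=\sigma(1)$. Every remaining entry $g_{I_r}$ with $r\ge1$ contains $i$. So we must rewrite $g_I(\mathbf x)=e_{x_i}\,g_{I\setminus\{i\}}(d_i^1\mathbf x)$ to pull out $e_{x_i}$ on the left. The tool is the relation $e_{x_j}e_{x_i}=e_{x_i}e_{x_j\lhd x_i}$, used to move $e_{x_i}$ leftward past the factors with indices $j<i$. Those factors become $e_{x_j\lhd x_i}$, while the indices $j>i$ are unchanged. This is precisely the definition of $d_i^1\mathbf x$, and it gives $e_{x_i}\tkap_{n-1}(d_i^1\mathbf x)$. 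The sign is computed as before: $\sgn\sigma=(-1)^{i-1}\sgn(\sigma')$, the face $r=0$ contributes $+1$, and the prefactor ratio is $-1$. Together these give $(-1)^i$, completing \eqref{eq:kappa-homogeneous-boundary}.

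For \eqref{eq:kappa-chain-map}, apply $\varpi$. The left translation by $e_{x_i}$ disappears in coinvariants, so the right-hand side becomes $\sum_i(-1)^i\bigl(\kappa_{n-1}(d_i^1\mathbf x)-\kappa_{n-1}(d_i^0\mathbf x)\bigr)=\kappa_{n-1}(\partial_n^\R\mathbf x)$.

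For the quandle statement, suppose $x_j=x_{j+1}$. Then the transposition $\tau=(j\ j{+}1)$ pairs $\sigma$ with $\tau\circ\sigma$, which has the opposite sign. We claim these two terms give equal simplices after applying $\varpi$, so they cancel. The two flags differ only at the stage where exactly one of $j,j+1$ has been added. Consider an element $g_I$ with $j\in I$ and $j+1\notin I$; it corresponds to the set $I'=\tau(I)$ in the paired flag. Writing $I=A\cup\{j\}\cup B$ with $A<j$ and $B>j+1$, we have $g_{I'}=g_A\,e_{x_{j+1}}\,g_B$. We need the corresponding simplex entries to agree. This uses $x_j=x_{j+1}$ and the quandle identity $e_xe_x=e_xe_{x\lhd x}$, which is trivially consistent. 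Equality is immediate when $x_j=x_{j+1}$, because $e_{x_j}=e_{x_{j+1}}$ forces $g_I=g_{I'}$. So the two simplices are literally equal already in $\tB_n$, and $\tkap_n$ vanishes on degenerate generators.
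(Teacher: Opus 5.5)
Your proof is correct and follows the paper's argument. The paper phrases the computation through the standard triangulation of the $n$-cube: interior faces cancel in pairs, and the facets $\{x_i=0\}$ and $\{x_i=1\}$ produce the $d_i^0$ and $e_{x_i}\,d_i^1$ terms. Your face-by-face bookkeeping, with the permutation signs $(-1)^{n-i}$ and $(-1)^{i-1}$, is an explicit unpacking of that picture, and your degeneracy pairing $\sigma\leftrightarrow(j\ j{+}1)\circ\sigma$ is the paper's. The only blemish is the stray remark about the quandle identity $e_xe_x=e_xe_{x\lhd x}$: the cancellation uses only $x_j=x_{j+1}$ and already holds in $\tB_n$, as you yourself conclude.
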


\begin{proof}
The chain $T_n(\mathbf x):=(-1)^n\widetilde\kappa_n(\mathbf x)$ is the oriented chain obtained by mapping the vertex $I\subset\{1,\ldots,n\}$ of the standard triangulation of the $n$-cube to $g_I(\mathbf x)$. Here the top-dimensional simplex indexed by $\sigma\in S_n$ is assigned the orientation $\sgn(\sigma)$ relative to the standard orientation of the cube. With this convention, the interior faces cancel in pairs, and
\[
 \td T_n=\sum_{i:\,1\le i\le n}(-1)^{i-1}\bigl(T_{i,1}-T_{i,0}\bigr).
\]
The $0$-face in the $i$th coordinate is $T_{n-1}(d_i^0\mathbf x)$. On the other hand, after relabeling the indices in an order-preserving way, one has $g_{J\cup\{i\}}(\mathbf x)=e_{x_i}\,g_J(d_i^1\mathbf x)$ for every $J\subset\{1,\ldots,n\}\setminus\{i\}$. Hence the corresponding $1$-face is $e_{x_i}T_{n-1}(d_i^1\mathbf x)$. Substituting $T_m=(-1)^m\widetilde\kappa_m$ gives~\eqref{eq:kappa-homogeneous-boundary}; passing to coinvariants yields~\eqref{eq:kappa-chain-map}. This is the homogeneous form of Kabaya's classifying chain \cite[\S8.4]{Kabaya}.

Finally, suppose that $x_r=x_{r+1}$. The map
$\sigma\mapsto(r\ r+1)\circ\sigma$ is a fixed-point-free involution of $S_n$. In each pair of corresponding terms, the index sets representing the vertices are obtained from one another by interchanging $r$ and $r+1$; since $x_r=x_{r+1}$, the two terms determine the same homogeneous simplex. Their permutation signs are opposite, so the two terms cancel. Thus every degenerate chain is mapped to zero.
\end{proof}

Kabaya's cubical boundary is the negative of our $\partial^\R$. If a rack generator in degree $n$ is identified with $(-1)^n$ times the corresponding cell, then $\kappa_\bullet$ is the cellular chain map induced by the classifying map. We shall therefore identify $\kappa_{n*}$ with $c_{n*}$ throughout \cite[\S8.4]{Kabaya}. The last assertion of Proposition~\ref{prop:kappa-boundary} also shows that $\kappa_\bullet$ factors through the quandle chain complex $C_*^\Q(X)$.

\subsection{The quandle reflection of a rack}
\label{subsec:quandle-reflection}

For a rack $X$, put $\iota_X(x)=x\lhd x$, and let $\sim_{\mathrm q}$ be the rack congruence---that is, an equivalence relation compatible with the rack operation---generated by
$x\sim_{\mathrm q}\iota_X(x)$ for all $x\in X$. Set $X^{\mathrm q}:=X/{\sim_{\mathrm q}}$ and write $q_X\colon X\to X^{\mathrm q}$ for the quotient map. This is the universal quandle quotient of $X$ and is also called the quandle reduction
\cite{FennRourke}; see also \cite[Proposition~5.1]{TanakaTaniguchi}. Some references use the inverse kink map, but it generates the same congruence. Indeed, the quotient operation $[x]\lhd[y]=[x\lhd y]$ makes $X^{\mathrm q}$ a quandle, and every rack homomorphism from $X$ to a quandle factors uniquely through $q_X$. Thus $q_X$ is the reflection from racks to quandles.

The quandle reduction does not change the associated group. Substituting $y=x$ into the defining relation of $\As(X)$ gives $e_x=e_{x\lhd x}$. Hence the canonical rack homomorphism $x\mapsto e_x$ is constant on $\sim_{\mathrm q}$ and induces a homomorphism $\As(X^{\mathrm q})\to\As(X)$ given by $e_{[x]}\mapsto e_x$. Therefore the homomorphism
\[
 \Phi_X:=\As(q_X)\colon
 \As(X)\xrightarrow{\cong}\As(X^{\mathrm q}),
 \qquad e_x\longmapsto e_{[x]},
\]
is an isomorphism, with inverse given by the homomorphism above.

\begin{proposition}
\label{prop:quandle-reflection}
\begin{enumerate}[label=(\arabic*)]
\item If $\Type(X)=t<\infty$, then $d:=\Type(X^{\mathrm q})$ is finite and divides $t$. Moreover, if $X$ is connected, then so is $X^{\mathrm q}$.
\item Let $q_{X,n}\colon C_n^\R(X)\to C_n^\R(X^{\mathrm q})$ be the chain map obtained by applying $q_X$ to each coordinate. To distinguish the two targets, write the classifying chains as $\kappa_n^X$ and $\kappa_n^{X^{\mathrm q}}$. Then the bar-chain map induced by $\Phi_X$ satisfies
\begin{equation}
 (\Phi_X)_\#\kappa_n^X
 =\kappa_n^{X^{\mathrm q}}q_{X,n}.
 \label{eq:quandle-reflection-classifying}
\end{equation}
\end{enumerate}
\end{proposition}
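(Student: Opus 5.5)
For part (1), I will work directly with the quotient operation $[x]\lhd[y]=[x\lhd y]$ on $X^{\mathrm q}$. The right translation $R_{[y]}$ of $X^{\mathrm q}$ is the map induced by $R_y$ on equivalence classes. Since $\sim_{\mathrm q}$ is a congruence, $R_y$ descends, and the induced map satisfies $R_{[y]}^t([x])=[R_y^t(x)]=[x]$. Hence every right translation of $X^{\mathrm q}$ has order dividing $t$, so $d=\Type(X^{\mathrm q})$ is finite.

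To show $d\mid t$, I will use that the set $\{m>0\mid R_{[y]}^m=\mathrm{id}\ \forall y\}$ is the set of positive elements of a subgroup of $\ZZ$, namely the intersection of the annihilators of the orders. So $d$ is its generator, i.e.\ the least common multiple of the orders of the $R_{[y]}$. Since $t$ is a common multiple of these orders, $d$ divides $t$.

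For connectedness, the key observation is that $q_X$ intertwines the actions. For $g\in\As(X)$ we have $q_X(x\cdot g)=q_X(x)\cdot\Phi_X(g)$; I will check this on generators, where it reads $[x\lhd y]=[x]\lhd[y]$. Since $q_X$ is surjective and $\Phi_X$ is surjective, if $X$ is a single $\As(X)$-orbit then $X^{\mathrm q}$ is a single $\As(X^{\mathrm q})$-orbit.

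For part (2), I will compare the two chains term by term using the definition~\eqref{eq:kappa-definition}. Fix $\mathbf x=(x_1,\ldots,x_n)$. The bar-chain map induced by a homomorphism $\Phi$ sends $\langle g_0,\ldots,g_n\rangle$ to $\langle\Phi(g_0),\ldots,\Phi(g_n)\rangle$. This map is equivariant along $\Phi$, so it descends to coinvariants compatibly with $\varpi$; hence it suffices to prove the identity already at the level of $\tkap$. For each $I\subset\{1,\ldots,n\}$ I will show
\[
\Phi_X\bigl(g_I(\mathbf x)\bigr)=\Phi_X(e_{x_{i_1}}\cdots e_{x_{i_k}})=e_{[x_{i_1}]}\cdots e_{[x_{i_k}]}=g_I(q_{X,n}\mathbf x).
\]
Applying this to every vertex of every simplex in the sum~\eqref{eq:kappa-definition}, with the same signs $(-1)^n\sgn(\sigma)$, gives $(\Phi_X)_\#\tkap_n^X(\mathbf x)=\tkap_n^{X^{\mathrm q}}(q_{X,n}\mathbf x)$. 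Passing to coinvariants then yields~\eqref{eq:quandle-reflection-classifying}. The degree-zero case is trivial.

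I expect no real obstacle. The only points needing care are the well-definedness of $R_{[y]}$, which follows from $\sim_{\mathrm q}$ being a congruence, and the identification of $(\Phi_X)_\#$ on coinvariants. For the latter I will note that $\tB_\bullet(\Phi_X)$ is a chain map satisfying $\tB(\Phi_X)(g\cdot c)=\Phi_X(g)\cdot\tB(\Phi_X)(c)$, and this is exactly what makes it descend to the coinvariant complexes.
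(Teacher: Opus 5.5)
Your proposal is correct and follows the paper's proof. In (1) you descend $R_y^t=\mathrm{id}$ to the quotient and use that $q_X$ is surjective and $\Phi_X$-equivariant; in (2) you check $\Phi_X(g_I^X(\mathbf x))=g_I^{X^{\mathrm q}}(q_{X,n}\mathbf x)$ vertex by vertex in \eqref{eq:kappa-definition}. Your extra remarks (why the minimal exponent $d$ divides $t$, and why $(\Phi_X)_\#$ descends to coinvariants) only spell out details the paper leaves implicit.
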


\begin{proof}
If $R_y^t=\operatorname{id}_X$, then $R_{[y]}^t=\operatorname{id}$ on the quotient, and hence $d\mid t$. Since $q_X$ is surjective and equivariant with respect to $\Phi_X$, the quandle $X^{\mathrm q}$ is connected whenever $X$ is connected.

For every $I\subset\{1,\ldots,n\}$, one has $\Phi_X\bigl(g_I^X(\mathbf x)\bigr)=g_I^{X^{\mathrm q}}(q_{X,n}\mathbf x)$. Substituting this into~\eqref{eq:kappa-definition} gives~\eqref{eq:quandle-reflection-classifying}.
\end{proof}

\section{The edge-replacement identity via shuffles}

In this section, we construct the chain homotopy underlying the main theorem for racks of finite type. In Subsection~\ref{subsec:shuffle-and-suspension}, we review the shuffle product and central suspension in the homogeneous bar complex. In Subsection~\ref{subsec:fan-edge-replacement}, we identify the central elements arising from the type and prove the edge-replacement identity by means of a fan chain. Since shuffles involving noncentral elements do not in general descend to $B_*^{\gr}(G)$, all calculations are carried out before passing to coinvariants.

\subsection{Shuffles and central suspension}
\label{subsec:shuffle-and-suspension}

For a group $G$, set $E_mG=G^{m+1}$ with its standard simplicial
structure and identify $\ZZ[E_\bullet G]$ with $(\tB_*(G),\td)$. Let
$\operatorname{sh}$ be the standard Eilenberg--Zilber shuffle and let
$\mu_\#$ be the chain map induced by coordinatewise multiplication
$\mu((u_i),(v_i))=(u_iv_i)$. For $U\in\tB_p(G)$ and $V\in\tB_r(G)$, set
\begin{equation}
 U\diamond V
 :=\mu_\#\operatorname{sh}(U\otimes V)
 \in\tB_{p+r}(G).
 \label{eq:shuffle-definition}
\end{equation}
Write $\operatorname{Sh}(p,r)$ for
the set of $(p,r)$-shuffles. For generators
$U=\langle u_0,\ldots,u_p\rangle$ and
$V=\langle v_0,\ldots,v_r\rangle$, if the $k$th vertex of the lattice path
associated with such a shuffle is $(i_k,j_k)$, then the corresponding term is
$\langle u_{i_0}v_{j_0},\ldots,u_{i_{p+r}}v_{j_{p+r}}\rangle$, with the usual
shuffle sign.

\begin{lemma}[Shuffle Leibniz rule]
For every $U\in\tB_p(G)$ and $V\in\tB_r(G)$,
\[
        \td_{p+r}(U\diamond V)
        =
        (\td_p U)\diamond V
        +(-1)^p U\diamond(\td_r V).
\]
\end{lemma}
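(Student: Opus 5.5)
The identity will follow by factoring $\diamond$ into three pieces, each of which is compatible with boundaries. The shuffle map $\operatorname{sh}\colon \ZZ[E_\bullet G]\otimes\ZZ[E_\bullet G]\to\ZZ[E_\bullet G\times E_\bullet G]$ is the Eilenberg--Zilber map for the product of the simplicial sets $E_\bullet G$ and $E_\bullet G$. The product $E_\bullet G\times E_\bullet G$ is the simplicial set whose $m$-simplices are pairs $((u_i),(v_i))$ of $m$-simplices, with faces acting diagonally. The coordinatewise multiplication $\mu\colon E_\bullet G\times E_\bullet G\to E_\bullet G$, $((u_i),(v_i))\mapsto(u_iv_i)$, is a simplicial map, because deleting the $r$th vertex commutes with coordinatewise multiplication. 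Hence $\mu_\#$ is a chain map for the alternating face boundary, which is exactly $\td$ under the identification $\ZZ[E_\bullet G]=(\tB_*(G),\td)$. Once $\operatorname{sh}$ is known to satisfy the Leibniz rule with respect to the tensor-product differential, the lemma follows by composing with $\mu_\#$.

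Concretely, I would carry out the following steps.

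\begin{enumerate}[label=(\roman*)]
\item Record the standard fact that the Eilenberg--Zilber shuffle map
\[
\operatorname{sh}(a\otimes b)=\sum_{(\mu,\nu)\in\operatorname{Sh}(p,r)}\sgn(\mu,\nu)\,(s_{\nu}a,\,s_{\mu}b)
\]
is a chain map
\[
\ZZ[K]\otimes\ZZ[L]\to\ZZ[K\times L]
\]
for arbitrary simplicial sets $K$ and $L$. Here the tensor product carries the differential $\partial(a\otimes b)=\partial a\otimes b+(-1)^p a\otimes\partial b$, and the target carries the alternating face sum. This is classical (Eilenberg--MacLane); I would cite it rather than reprove it.
\item Check that the lattice-path description of the terms given before the lemma agrees with $(s_\nu a,s_\mu b)$ followed by $\mu$. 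The $k$th vertex of the path being $(i_k,j_k)$ means that the $k$th vertex of $s_\nu a$ is $u_{i_k}$ and the $k$th vertex of $s_\mu b$ is $v_{j_k}$. So the definition in~\eqref{eq:shuffle-definition} is literally $\mu_\#\circ\operatorname{sh}$.
\item Compose the chain-map identities:
\[
\td(U\diamond V)=\mu_\#\,\partial\,\operatorname{sh}(U\otimes V)=\mu_\#\operatorname{sh}\bigl(\td U\otimes V+(-1)^pU\otimes\td V\bigr).
\]
The right-hand side is exactly the stated formula.
\end{enumerate}

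No group structure beyond coordinatewise multiplication is used, and no equivariance or passage to coinvariants enters. This is consistent with the paper's remark that the calculations must stay before coinvariants.

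The only delicate point is the sign convention. I must make sure that the ``usual shuffle sign'' intended in the definition is the one for which $\operatorname{sh}$ satisfies the Leibniz rule with the sign $(-1)^p$ on the second factor, and that the degree-$0$ convention $\td_0=0$ causes no trouble. For the latter, when $p=0$ or $r=0$ the shuffle set is a single degeneracy pattern and the formula reduces directly to $\td$ applied to a coordinatewise translate.

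If a self-contained check is preferred, I would verify the chain-map property directly as follows. Write $\td(s_\nu a,s_\mu b)$ as an alternating sum of faces $d_k$. For each shuffle, split the vertex deletions into two kinds.
\begin{itemize}
\item \emph{Corner vertices}, where the path turns. Deleting such a vertex gives a face that also arises from the adjacent shuffle obtained by swapping the two steps at that corner. The two shuffles have opposite signs, so these terms cancel in pairs.
\item \emph{Straight-run vertices}, lying in the interior of a run of horizontal or vertical steps. Deleting such a vertex corresponds to a face $d_i$ of $U$ or of $V$ composed with a shuffle of smaller size. Counting positions along the path gives the sign $(-1)^i$ for faces of $U$ and $(-1)^{p+j}$ for faces of $V$.
\end{itemize}
This bookkeeping of signs and of the endpoint vertices is the main obstacle. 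It is nevertheless routine and standard, so I would simply invoke the Eilenberg--Zilber theorem.
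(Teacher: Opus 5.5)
Your proposal is correct and follows the same route as the paper. In both, $\diamond$ is by definition $\mu_\#\circ\operatorname{sh}$, the shuffle map is the Eilenberg--Zilber chain map for the tensor-product differential, and $\mu_\#$ is a chain map because coordinatewise multiplication $E_\bullet G\times E_\bullet G\to E_\bullet G$ is simplicial, so composing the two gives the Leibniz rule; your corner/straight-run cancellation sketch and the check of the cases $p=0$ or $r=0$ are consistent with this but go beyond what the paper writes out.
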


\begin{proof}
Both $\operatorname{sh}$ and $\mu_\#$ are chain maps \cite{EilenbergZilber}; see also \cite[Chapter~VIII, \S8]{MacLaneHomology}; the assertion follows immediately from~\eqref{eq:shuffle-definition} and the sign convention for the tensor-product differential.
\end{proof}

For $k\in G$, set $kV=\langle kv_0,\ldots,kv_r\rangle$ and $U^k=\langle k^{-1}u_0k,\ldots,k^{-1}u_pk\rangle$. Then, in coinvariants,
\begin{equation}
        \varpi\bigl(U\diamond(kV)\bigr)
        =\varpi\bigl(U^k\diamond V\bigr)
        \label{eq:shuffle-conjugation}
\end{equation}
Indeed, multiplying every vertex on the left by $k^{-1}$ transforms the simplex on the left into the corresponding simplex on the right. Thus $\diamond$ does not in general descend to the coinvariant complex, although it does when the first variable is invariant under conjugation. We shall use this descended operation only for central suspension; all other shuffle calculations will be carried out in $\tB_*(G)$ before passing to coinvariants.

\begin{lemma}[Central suspension]
\label{lem:center-suspension}
Let $s\in Z(G)$ and put $L_s=\langle1,s\rangle$. For
$A\in B_m^{\gr}(G)$, choose a representative
$\widetilde A\in\tB_m(G)$ and define
\[
 \Sigma_s(A)
 :=\varpi_{m+1}(L_s\diamond\widetilde A).
\]
This definition is independent of the representative, and
\[
 \partial_{m+1}^{\gr}\Sigma_s(A)
 =-\Sigma_s(\partial_m^{\gr}A).
\]
Thus $\Sigma_s$ induces a homomorphism of degree $1$
\[
 (\Sigma_s)_*\colon H_m^{\gr}(G)\longrightarrow H_{m+1}^{\gr}(G).
\]

If $s=1$, then the induced homomorphism $(\Sigma_1)_*$ is zero.
\end{lemma}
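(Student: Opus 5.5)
The lemma makes three claims: $\Sigma_s$ is well defined, it anticommutes with the boundary, and $(\Sigma_1)_*=0$. I would prove them in that order, working in $\tB_*(G)$ and using the shuffle Leibniz rule and the conjugation identity~\eqref{eq:shuffle-conjugation}.

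\emph{Well-definedness.} Two representatives of $A$ differ by an element of the kernel of $\varpi_m$. This kernel is spanned by elements $kV-V$ with $k\in G$ and $V$ a generator of $\tB_m(G)$. By~\eqref{eq:shuffle-conjugation},
\[
\varpi_{m+1}\bigl(L_s\diamond(kV)\bigr)=\varpi_{m+1}\bigl(L_s^k\diamond V\bigr).
\]
Since $s$ is central, $L_s^k=\langle k^{-1}k,\,k^{-1}sk\rangle=\langle1,s\rangle=L_s$. Hence $\varpi_{m+1}(L_s\diamond(kV-V))=0$, and $\Sigma_s$ is well defined and additive.

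\emph{Boundary formula.} The Leibniz rule with $p=1$ gives
\[
\td(L_s\diamond\widetilde A)=(\td L_s)\diamond\widetilde A-L_s\diamond\td\widetilde A,
\qquad \td L_s=\langle s\rangle-\langle1\rangle.
\]
A $0$-simplex $\langle g\rangle$ shuffled with $V$ is just $gV$, since there is only one shuffle and it carries no sign. So $(\td L_s)\diamond\widetilde A=s\widetilde A-\widetilde A$, and this vanishes after applying $\varpi$. Moreover $\td\widetilde A$ represents $\partial^{\gr}_m A$. Applying $\varpi_{m+1}$ therefore gives $\partial^{\gr}\Sigma_s(A)=-\Sigma_s(\partial^{\gr}A)$. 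It follows that $\Sigma_s$ sends cycles to cycles and boundaries to boundaries, so it induces $(\Sigma_s)_*$.

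\emph{The case $s=1$.} This is the step needing an actual computation. Here $L_1=\langle1,1\rangle$, and every vertex of $L_1\diamond\langle v_0,\ldots,v_m\rangle$ is some $v_j$. Concretely, it equals
\[
\sum_{j=0}^m(-1)^{j}\langle v_0,\ldots,v_j,v_j,\ldots,v_m\rangle
\]
up to a global sign convention, with $v_j$ repeated. In the inhomogeneous description each such term contains the identity entry $1$, so it is degenerate.

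For the normalized bar complex this already finishes the argument. In the unnormalized complex I would instead give an explicit chain homotopy. The map $h(\langle v_0,\ldots,v_m\rangle)=\langle v_0,\ldots,v_m\rangle$ viewed as degenerate is not a homotopy by itself, so I would use the standard fact that the degenerate subcomplex of a simplicial abelian group is acyclic; this gives that $(\Sigma_1)_*$ lands in classes represented by degenerate cycles, which are zero in homology. Alternatively, one can note that the term-by-term computation above is exactly the simplicial prism operator applied to $\widetilde A$. That operator gives $\td(L_1\diamond\widetilde A)+L_1\diamond\td\widetilde A=0$, and $L_1\diamond$ descends to a map on coinvariants whose image consists of degenerate chains, which are null in homology.

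The main obstacle is pinning down the signs in the shuffle of the $1$-simplex with an $m$-simplex. Once that expansion is written out, the vanishing for $s=1$ follows from the degenerate-chains argument.
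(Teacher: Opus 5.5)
Your proof is correct. The well-definedness and boundary-formula steps are the same as the paper's: the conjugation identity plus centrality of $s$, then the Leibniz rule with $\td L_s=\langle s\rangle-\langle1\rangle$ and $\varpi(s\widetilde A-\widetilde A)=0$.

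For $s=1$ you take a different route. The paper writes down an explicit null-homotopy. With $U_2=\langle1,1,1\rangle$ one has $\td U_2=L_1$. The operator $H_m(A)=\varpi(U_2\diamond\widetilde A)$ is well defined for the same reason as $\Sigma_s$, since all its vertices are central. The Leibniz rule then gives $\partial H_m=\Sigma_1+H_{m-1}\partial$ at once.

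You instead compute $L_1\diamond\langle v_0,\ldots,v_m\rangle=\sum_j(-1)^j\langle v_0,\ldots,v_j,v_j,\ldots,v_m\rangle$, a signed sum of degeneracies. You conclude that $\Sigma_1$ lands in the degenerate subcomplex, and you invoke the normalization theorem. This is valid, but you must apply the acyclicity of degenerate chains in the coinvariant complex $B^{\gr}_\bullet(G)=\ZZ\otimes_{\ZZ G}\ZZ[E_\bullet G]$. That complex is itself a simplicial abelian group (the free abelian group on the nerve of $G$), because the face and degeneracy maps are $G$-equivariant. The point matters because $\Sigma_1(A)$ is a cycle only after passing to coinvariants, not in $\tB_*(G)$.

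Your route is more conceptual, since it shows that $\Sigma_1$ vanishes identically on the normalized complex, but it imports the normalization theorem. The paper's route is self-contained, stays within the shuffle calculus already set up, and gives a chain-level homotopy.

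Three minor points:
\begin{enumerate}
\item The sentence about the map $h$ ``viewed as degenerate'' is not meaningful. The explicit homotopy you were looking for is exactly $\varpi(\langle1,1,1\rangle\diamond-)$.
\item The prism-operator ``alternative'' only re-derives the anticommutation $\td(L_1\diamond\widetilde A)=-L_1\diamond\td\widetilde A$ and still relies on the same acyclicity fact.
\item The signs in $L_1\diamond V$ are not an obstacle, because each term is degenerate individually.
\end{enumerate}
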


\begin{proof}
Since the relations defining the coinvariant complex are generated by chains of the form $kV-V$, it suffices to check invariance under replacing a generator $V$ by $kV$. If the representative is replaced by $k\widetilde A$, then~\eqref{eq:shuffle-conjugation} gives
$\varpi(L_s\diamond k\widetilde A)=\varpi(L_s^k\diamond\widetilde A)$. Since $s$ is central, $L_s^k=L_s$, and therefore $\Sigma_s$ is well defined. Moreover, $\td L_s=\langle s\rangle-\langle1\rangle$, so the Leibniz rule gives
\[
 \partial\Sigma_s(A)
 =\varpi\bigl((\langle s\rangle-\langle1\rangle)\diamond\widetilde A\bigr)
  -\Sigma_s(\partial A)
 =-\Sigma_s(\partial A),
\]
where the first term vanishes in coinvariants because $\varpi(s\widetilde A-\widetilde A)=0$.

When $s=1$, let $U_2=\langle1,1,1\rangle$, so that $\td U_2=L_1$. The map $H_m(A):=\varpi(U_2\diamond\widetilde A)$ is well defined, and the Leibniz rule gives
$\partial H_m=\Sigma_1+H_{m-1}\partial$. Hence $(\Sigma_1)_*=0$.
\end{proof}

\subsection{Central elements arising from the type, fan chains, and edge replacement}
\label{subsec:fan-edge-replacement}

We first record the relation between the type and central elements of the associated group in the form needed below.

\begin{lemma}
\label{lem:central-type}
Let $X$ be a rack, and suppose that $t>0$ satisfies $R_y^t=\operatorname{id}_X$ for every $y\in X$. For an orbit $\lambda\in\OO(X)$ and an element $x\in X_\lambda$, put $s_\lambda=(e_x)^t$. Then $s_\lambda$ is independent of the choice of $x\in X_\lambda$ and lies in the center $Z(\As(X))$.
\end{lemma}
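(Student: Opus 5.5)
The plan is to work entirely with the relation $e_{x\cdot g}=g^{-1}e_xg$ in $\As(X)$, recorded in Subsection~\ref{subsec:rack-quandle-complexes}, together with the fact that the action of $\As(X)$ on $X$ factors through the action of the generators by the right translations $R_y$. I will first prove centrality of $(e_x)^t$ for a fixed $x$, and then deduce independence of the representative within the orbit.

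\emph{Centrality.} It suffices to check that $(e_x)^t$ commutes with every generator $e_y$, $y\in X$. From the defining relation $e_xe_y=e_ye_{x\lhd y}$ we get $e_y^{-1}e_xe_y=e_{x\lhd y}=e_{R_y(x)}$, and raising to the $t$-th power gives
\[
 e_y^{-1}(e_x)^te_y=(e_{R_y(x)})^t .
\]
This alone does not finish the argument, because $R_y(x)\neq x$ in general. I would therefore use the relation with the roles reversed: conjugating $e_y$ by $e_x$ gives $e_x^{-1}e_ye_x=e_{R_x(y)}$, and iterating $t$ times gives
\[
 (e_x)^{-t}e_y(e_x)^t=e_{R_x^t(y)}=e_y ,
\]
since $R_x^t=\operatorname{id}_X$ by hypothesis. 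Hence $(e_x)^t$ commutes with every generator $e_y$, and therefore it lies in $Z(\As(X))$.

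\emph{Independence.} Let $x'\in X_\lambda$. Then $x'=x\cdot g$ for some $g\in\As(X)$, so $e_{x'}=g^{-1}e_xg$ and hence $(e_{x'})^t=g^{-1}(e_x)^tg=(e_x)^t$ by the centrality just proved. The identity $e_{x\cdot g}=g^{-1}e_xg$ itself follows from the defining relation by induction on the word length of $g$, including the inverse generators, which act by $R_y^{-1}$.

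The only subtle point is to apply the type hypothesis to the right translation of the \emph{other} element, $R_x^t(y)=y$, rather than trying to show directly that $R_y$ fixes $x$. Once this is seen, everything else is routine.
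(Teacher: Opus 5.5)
Your proposal is correct and follows essentially the same argument as the paper: conjugating a generator $e_y$ by $(e_x)^t$ gives $e_{R_x^t(y)}=e_y$, so $(e_x)^t$ is central. Independence of the representative then follows from $e_{x\cdot g}=g^{-1}e_xg$ together with that centrality. Your preliminary computation of $e_y^{-1}(e_x)^te_y$ is an unneeded detour but does no harm.
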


\begin{proof}
The author proved the same assertion for connected quandles \cite[Lemma~3.5]{NosakaAdjoint}. The following direct calculation does not use the quandle condition. Indeed, 
one has 
\[ (e_x)^{-t}e_b(e_x)^t=e_{b\cdot(e_x)^t}=e_{R_x^t(b)}=e_b, \qquad x,b \in X .\]
Thus $(e_x)^t$ commutes with every generator $e_b$ and is therefore central. If $x'=x\cdot g$, then $e_{x'}=g^{-1}e_xg$, and centrality gives $(e_{x'})^t=g^{-1}(e_x)^tg=(e_x)^t$.
\end{proof}

Henceforth, let $X$ be a rack, put $G=\As(X)$, and fix $t>0$ such that
$R_y^t=\operatorname{id}_X$ for every $y\in X$. For
$\lambda\in\OO(X)$ and $n\ge1$, let $C_n^{\R,\lambda}(X)$ be the
subgroup generated by tuples whose first coordinate lies in $X_\lambda$,
and set $C_0^{\R,\lambda}(X)=0$. Since the boundary preserves the orbit of
the first coordinate, $C_*^{\R,\lambda}(X)$ is a subcomplex.

Let $n\ge1$ and let $\mathbf x=(x_1,\ldots,x_n)$ have first coordinate in $X_\lambda$. Put $a=e_{x_1}$ and $s=s_\lambda=a^t$, and define the fan chain by
\[
        \tF_a=
        \sum_{r:\,0\le r\le t-2}\langle a^r,a^{r+1},s\rangle
        \in \tB_2(G),
\]
where the sum is empty when $t=1$. Also define the chain representing the subdivided first edge by
\[
        \tE_a=\sum_{r:\,0\le r\le t-1}\langle a^r,a^{r+1}\rangle.
\]
The chain $L_s=\langle1,s\rangle$ is the one used in
Lemma~\ref{lem:center-suspension}. Taking boundaries gives the telescoping
identity
\begin{equation}
        \td_2\tF_a=\tE_a-L_s,
        \label{eq:fan-boundary}
\end{equation}
where the empty-sum convention includes the case $t=1$.

Write $p_n(x_1,\ldots,x_n)=(x_2,\ldots,x_n)$ for deletion of the first coordinate, with the convention $p_1(x)=*\in X^0$. Put $\widehat{\mathbf x}=p_n\mathbf x$ and, for every $n\ge1$, define
\[
 \tP_n^\lambda(\mathbf x)
 :=\tF_a\diamond\tkap_{n-1}(\widehat{\mathbf x})\in\tB_{n+1}(G),
 \qquad
 K_n^\lambda(\mathbf x):=-\varpi_{n+1}\bigl(\tP_n^\lambda(\mathbf x)\bigr).
\]
Thus $K_n^\lambda\colon C_n^{\R,\lambda}(X)\to B_{n+1}^{\gr}(G)$. Since $\tF_a\diamond\tkap_0(*)=\tF_a$, in degree one we have $\tP_1^\lambda(x)=\tF_{e_x}$ and $K_1^\lambda(x)=-\varpi_2(\tF_{e_x})$. Thus no separate convention is needed when $n=2$.

\begin{lemma}[Shuffle along the first edge]
\label{lem:first-edge-shuffle}
For every $n\ge1$ and every $(x_1,\ldots,x_n)\in X^n$,
\[
\langle1,e_{x_1}\rangle\diamond
 \tkap_{n-1}(x_2,\ldots,x_n)
 =-\tkap_n(x_1,\ldots,x_n).
\]
\end{lemma}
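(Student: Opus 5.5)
Both sides are explicit signed sums of homogeneous simplices, so the plan is a direct term-by-term comparison. Write $a=e_{x_1}$ and $\widehat{\mathbf x}=(x_2,\ldots,x_n)$, and index the coordinates of $\widehat{\mathbf x}$ by $\{2,\ldots,n\}$. Then $g_J(\widehat{\mathbf x})=g_J(\mathbf x)$ for $J\subset\{2,\ldots,n\}$, and $g_{\{1\}\cup J}(\mathbf x)=a\,g_J(\mathbf x)$, since the index $1$ comes first in the increasing order.

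By~\eqref{eq:kappa-definition}, $\tkap_{n-1}(\widehat{\mathbf x})=(-1)^{n-1}\sum_{\tau}\sgn(\tau)\langle g_{J_0},g_{J_1},\ldots,g_{J_{n-1}}\rangle$. Here $\tau$ runs over the orderings of $\{2,\ldots,n\}$, and $J_k$ is the set of the first $k$ entries of $\tau$.

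Next I would expand $\langle 1,a\rangle\diamond\langle v_0,\ldots,v_{n-1}\rangle$ using~\eqref{eq:shuffle-definition}. The first factor has dimension $1$, so the $(1,n-1)$-shuffles are indexed by the position $p\in\{0,\ldots,n-1\}$ at which the single horizontal step occurs. The term for $p$ is
\[
\langle v_0,\ldots,v_p,av_p,av_{p+1},\ldots,av_{n-1}\rangle
\]
with shuffle sign $(-1)^{n-1-p}$. This is the number of vertical steps taken after the horizontal one, or equivalently before it, depending on the convention; I will fix the convention from Mac Lane.

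Substituting $v_k=g_{J_k}$, the vertex sequence becomes
\[
g_{J_0},\ldots,g_{J_p},\ g_{J_p\cup\{1\}},\ \ldots,\ g_{J_{n-1}\cup\{1\}}.
\]
This is exactly the vertex sequence attached in~\eqref{eq:kappa-definition} to the permutation $\sigma\in S_n$ obtained by inserting the index $1$ into $\tau$ at position $p+1$. The correspondence $(\tau,p)\mapsto\sigma$ is a bijection onto $S_n$. Moreover, $\sgn(\sigma)=(-1)^{p}\sgn(\tau)$, because moving $1$ from the front to position $p+1$ takes $p$ transpositions. Hence the left side equals
\[
(-1)^{n-1}\sum_\sigma \sgn(\sigma)\,\varepsilon_p\,\langle\cdots\rangle,
\]
where $\varepsilon_p=(-1)^p$ times the shuffle sign. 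We need $\varepsilon_p\equiv 1$, which gives $(-1)^{n-1}=-(-1)^n$ and hence $-\tkap_n(\mathbf x)$.

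The main obstacle is the sign bookkeeping. The claim forces the shuffle sign of the term with the horizontal step in position $p$ to be $(-1)^p$, that is, the sign of the permutation moving the single $U$-step past the $p$ preceding $V$-steps. This is the standard Eilenberg--Zilber sign for a $(1,n-1)$ shuffle when the horizontal step is listed first. I would verify it on the case $n=2$ against $\kappa_2(x,y)=[e_x\mid e_y]-[e_y\mid e_{x\lhd y}]$, using $e_ye_{x\lhd y}=e_xe_y$, to make sure the convention matches. The case $n=1$ is immediate: $\langle1,a\rangle\diamond\langle1\rangle=\langle1,a\rangle=-\tkap_1(x_1)$.
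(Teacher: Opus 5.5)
Your proposal is correct and is essentially the paper's proof: it uses the same insertion bijection $\operatorname{Sh}(1,n-1)\times S_{n-1}\to S_n$, in which the shuffle sign $(-1)^p$ and $\sgn(\sigma)=(-1)^p\sgn(\tau)$ cancel and leave $(-1)^{n-1}=-(-1)^n$. The one blemish is your initial hedge about the sign. The two candidate signs $(-1)^{n-1-p}$ and $(-1)^p$ are not equivalent, since they differ by $(-1)^{n-1}$. Nor is the sign a free choice: the Leibniz rule $\td(U\diamond V)=(\td U)\diamond V+(-1)^pU\diamond\td V$ stated in the paper forces the standard sign $(-1)^p$, where $p$ is the number of $V$-steps before the $U$-step, which is exactly the sign you end up using.
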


\begin{proof}
Consider the following insertion map $\operatorname{Sh}(1,n-1)\times S_{n-1}\to S_n$. Regard $\tau\in S_{n-1}$ as a permutation of the indices $2,\ldots,n$. For a shuffle that takes $r$ steps in the second direction before taking the step in the first direction, insert the index $1$ after the first $r$ entries of $\tau$ to obtain $\sigma\in S_n$. This gives a bijection. The shuffle sign is $(-1)^r$, and $\sgn(\sigma)=(-1)^r\sgn(\tau)$. The corresponding vertex sequence agrees with the $\sigma$-term in~\eqref{eq:kappa-definition}; hence its coefficient on the left-hand side is
$(-1)^{n-1}(-1)^r\sgn(\tau)=(-1)^{n-1}\sgn(\sigma)$. By contrast, the coefficient of the $\sigma$-term in $\tkap_n$ is $(-1)^n\sgn(\sigma)$, proving the assertion.
\end{proof}

\begin{proposition}[Edge-replacement identity]
\label{prop:edge-replacement}
Let $X$ be a rack, and let $t>0$ satisfy $R_y^t=\operatorname{id}_X$ for every $y\in X$. For every $\lambda\in\OO(X)$ and every $n\ge2$, the following identity holds on $C_n^{\R,\lambda}(X)$:
\begin{equation}
        t\kappa_n+
        \Sigma_{s_\lambda}\kappa_{n-1}p_n
        =
        \partial_{n+1}^{\gr}K_n^\lambda
        +K_{n-1}^\lambda\partial_n^\R.
\label{eq:edge-replacement}
\end{equation}
\end{proposition}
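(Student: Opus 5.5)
The plan is to compute the boundary of the fan chain $\tP_n^\lambda(\mathbf x)=\tF_a\diamond\tkap_{n-1}(\widehat{\mathbf x})$ in $\tB_*(G)$, and only then pass to coinvariants. Since $\tF_a$ has degree $2$, the shuffle Leibniz rule together with the telescoping identity~\eqref{eq:fan-boundary} gives
\[
 \td_{n+1}\tP_n^\lambda(\mathbf x)
 =\tE_a\diamond\tkap_{n-1}(\widehat{\mathbf x})
 -L_s\diamond\tkap_{n-1}(\widehat{\mathbf x})
 +\tF_a\diamond\td_{n-1}\tkap_{n-1}(\widehat{\mathbf x}).
\]
I will identify the image under $\varpi_n$ of each of the three terms separately.

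\emph{First term.} Each summand of $\tE_a$ is $\langle a^r,a^{r+1}\rangle=a^r\langle1,a\rangle$. Every vertex of $a^r\langle1,a\rangle\diamond V$ is $a^r(u_iv_j)$, so this chain is the left translate by $a^r$ of $\langle1,a\rangle\diamond V$. Hence $\varpi(\tE_a\diamond V)=t\,\varpi(\langle1,a\rangle\diamond V)$. Lemma~\ref{lem:first-edge-shuffle} then gives $\varpi_n(\tE_a\diamond\tkap_{n-1}(\widehat{\mathbf x}))=-t\kappa_n(\mathbf x)$.

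\emph{Second term.} By the definition in Lemma~\ref{lem:center-suspension}, this term is $-\Sigma_{s}\kappa_{n-1}(p_n\mathbf x)$, with $s=s_\lambda$. It is central by Lemma~\ref{lem:central-type}.

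Together, these two terms give $-\bigl(t\kappa_n+\Sigma_{s_\lambda}\kappa_{n-1}p_n\bigr)(\mathbf x)$. It therefore remains to show
\[
 \varpi\bigl(\tF_a\diamond\td\tkap_{n-1}(\widehat{\mathbf x})\bigr)
 =\varpi\bigl(\tP_{n-1}^\lambda(\partial_n^\R\mathbf x)\bigr)
 =-K_{n-1}^\lambda\partial_n^\R\mathbf x .
\]
Once this holds, negating yields $\partial K_n^\lambda=t\kappa_n+\Sigma\kappa_{n-1}p_n-K_{n-1}^\lambda\partial^\R$, which is~\eqref{eq:edge-replacement}.

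\emph{Third term: the main bookkeeping step.} I expand $\td\tkap_{n-1}(\widehat{\mathbf x})$ by the homogeneous boundary formula~\eqref{eq:kappa-homogeneous-boundary}, indexing by $j=1,\ldots,n-1$, and compare with $\partial_n^\R\mathbf x=\sum_{i\ge2}(-1)^i(d_i^1-d_i^0)\mathbf x$. The $i=1$ term of $\partial_n^\R$ vanishes, and for $i=j+1$ one has $p_n d_i^0\mathbf x=d_j^0\widehat{\mathbf x}$ and $p_n d_i^1\mathbf x=d_j^1\widehat{\mathbf x}$. Moreover, the first coordinate of $d_i^0\mathbf x$ is $x_1$, while that of $d_i^1\mathbf x$ is $x_1\lhd x_i$; both lie in $X_\lambda$.

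For the $d^0$-terms, the coefficient $(-1)^{j+1}$ in the boundary formula equals the coefficient $-(-1)^{i}$ in $\partial^\R$, so they match directly.

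For the $d^1$-terms, the translated chains $e_{x_i}\tkap_{n-1}(d_j^1\widehat{\mathbf x})$ appear. Here I use the conjugation rule~\eqref{eq:shuffle-conjugation} with $k=e_{x_i}$: in coinvariants, $\tF_a\diamond(kV)$ becomes $\tF_a^{\,k}\diamond V$. Since $k^{-1}ak=e_{x_1\lhd x_i}$ and $s$ is central, $\tF_a^{\,k}=\tF_{e_{x_1\lhd x_i}}$, which is exactly the fan chain used in $\tP_{n-1}^\lambda(d_i^1\mathbf x)$. The coefficient $(-1)^j$ equals $(-1)^i$, as required.

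The case $n=2$, where $\tkap_0=\langle1\rangle$ and $\td\tkap_1$ involves $\tkap_0$, fits the same computation by the conventions $p_1(x)=*$ and $\tP_1^\lambda(x)=\tF_{e_x}$. The main obstacle is this last sign and orbit bookkeeping, in particular making sure the conjugated fan chain is exactly $\tF_{e_{x_1\lhd x_i}}$ rather than merely homologous to it. This holds because conjugation by $k$ sends $a^r\mapsto(k^{-1}ak)^r$ and fixes $s$.
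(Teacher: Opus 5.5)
Your route is the paper's: the Leibniz rule with \eqref{eq:fan-boundary}, left translation by $a^{-r}$ together with Lemma~\ref{lem:first-edge-shuffle} for the $\tE_a$ term, the definition of $\Sigma_{s_\lambda}$ for the $L_s$ term, and the conjugation rule \eqref{eq:shuffle-conjugation} with $\tF_a^{\,k}=\tF_{e_{x_1\lhd x_i}}$ for the tail. The first two terms and the identification of the conjugated fan chain are fine. However, the sign bookkeeping in the tail is wrong, and this is the step you yourself single out. You reach \eqref{eq:edge-replacement} only because a second slip cancels the first.

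With $i=j+1$ one has $(-1)^{j+1}=(-1)^i$. This is the \emph{negative} of the coefficient $-(-1)^i$ of $d_i^0\mathbf x$ in $\partial_n^\R\mathbf x$. Likewise, for the $d^1$-terms, $(-1)^j=-(-1)^i$. Hence
\[
 \varpi\bigl(\tF_a\diamond\td\tkap_{n-1}(\widehat{\mathbf x})\bigr)
 =-\varpi\bigl(\tP_{n-1}^\lambda(\partial_n^\R\mathbf x)\bigr)
 =K_{n-1}^\lambda\partial_n^\R\mathbf x ,
\]
not $-K_{n-1}^\lambda\partial_n^\R\mathbf x$. You can see this already for $n=2$. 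There $\td\tkap_1(x_2)=\langle1\rangle-\langle e_{x_2}\rangle$, so the tail is $\varpi\tF_{e_{x_1}}-\varpi\tF_{e_{x_1\lhd x_2}}=-\varpi\tP_1^\lambda\bigl((x_1\lhd x_2)-(x_1)\bigr)$.

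The sentence ``negating yields $\partial K_n^\lambda=\cdots-K_{n-1}^\lambda\partial^\R$'' also does not follow from your displayed claim. If the tail really were $-K_{n-1}^\lambda\partial_n^\R\mathbf x$, negation would give $\partial_{n+1}^{\gr}K_n^\lambda=t\kappa_n+\Sigma_{s_\lambda}\kappa_{n-1}p_n+K_{n-1}^\lambda\partial_n^\R$. That is the identity with the wrong sign on the last term.

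With the corrected tail one gets $\varpi\,\td\tP_n^\lambda(\mathbf x)=-t\kappa_n(\mathbf x)-\Sigma_{s_\lambda}\kappa_{n-1}(p_n\mathbf x)+K_{n-1}^\lambda\partial_n^\R\mathbf x$. Negating, using $K_n^\lambda=-\varpi\tP_n^\lambda$, gives exactly \eqref{eq:edge-replacement}, which is how the paper concludes. So the argument is repaired simply by tracking the parity change in the index shift $i=j+1$. As written, though, the key bookkeeping step asserts a false identity.
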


\begin{proof}
Let $\mathbf x=(x_1,\ldots,x_n)$ be a generator with $x_1\in X_\lambda$, and put $a=e_{x_1}$, $s=s_\lambda$, and $\widehat{\mathbf x}=p_n\mathbf x$. The Leibniz rule and~\eqref{eq:fan-boundary} give
\begin{equation}
 \td\tP_n^\lambda(\mathbf x)
 =(\tE_a-L_s)\diamond\tkap_{n-1}(\widehat{\mathbf x})
 +\tF_a\diamond\td\tkap_{n-1}(\widehat{\mathbf x}).
 \label{eq:P-boundary-start}
\end{equation}
For each $r$, simultaneously left-translate every vertex of
$\langle a^r,a^{r+1}\rangle\diamond
\tkap_{n-1}(\widehat{\mathbf x})$ by $a^{-r}$ and apply
Lemma~\ref{lem:first-edge-shuffle}. Then
\[
 \varpi\bigl(\tE_a\diamond\tkap_{n-1}(\widehat{\mathbf x})\bigr)
 =-t\kappa_n(\mathbf x).
\]
Moreover,
$\varpi(L_s\diamond\tkap_{n-1}(\widehat{\mathbf x}))=\Sigma_s\kappa_{n-1}(p_n\mathbf x)$.

Apply Proposition~\ref{prop:kappa-boundary} to the remaining term.
For $j=1,\ldots,n-1$, put $i=j+1$. The $0$-face term of the
tail has coefficient $(-1)^{j+1}=(-1)^i$ and gives
$\tP_{n-1}^\lambda(d_i^0\mathbf x)$. For a $1$-face term,
\eqref{eq:shuffle-conjugation} and
$e_{x_i}^{-1}ae_{x_i}=e_{x_1\lhd x_i}$ give
\[
\varpi\bigl(\tF_a\diamond e_{x_i}V\bigr)
=
\varpi\bigl(\tF_{e_{x_i}^{-1}ae_{x_i}}\diamond V\bigr),
\]
so its coefficient is $(-1)^j=(-1)^{i+1}$ and it gives
$\tP_{n-1}^\lambda(d_i^1\mathbf x)$. Consequently,
\begin{align*}
&\varpi\bigl(
\tF_a\diamond\td\tkap_{n-1}(\widehat{\mathbf x})
\bigr)\\
&\quad=
\sum_{i:\,2\le i\le n}(-1)^i
\varpi\bigl(\tP_{n-1}^\lambda(d_i^0\mathbf x)\bigr)
+
\sum_{i:\,2\le i\le n}(-1)^{i+1}
\varpi\bigl(\tP_{n-1}^\lambda(d_i^1\mathbf x)\bigr)\\
&\quad=
-\varpi\bigl(
\tP_{n-1}^\lambda(\partial_n^\R\mathbf x)
\bigr).
\end{align*}
Here the $i=1$ contribution to the rack boundary is zero because
$d_1^1=d_1^0$. Substituting these identities into
\eqref{eq:P-boundary-start} and using
$K_m^\lambda=-\varpi\tP_m^\lambda$ yields
\eqref{eq:edge-replacement}.
\end{proof}

\section{Proof of the main theorem}

We now prove Theorem~\ref{thm:main}. We first record deletion of the first coordinate and the degeneracy insertion for quandles, then prove the vanishing theorem for connected quandles and pass to connected racks by means of the quandle reflection. Finally, we establish the torsion statement without connectedness by decomposing according to the orbit of the first coordinate and arguing by induction on the degree.

\begin{lemma}
\label{lem:auxiliary-maps}
For every rack $X$, deletion of the first coordinate, $p_n(x_1,\ldots,x_n)=(x_2,\ldots,x_n)$ with $p_1(x)=*$, satisfies
\begin{equation}
 \partial_{n-1}^\R p_n=-p_{n-1}\partial_n^\R
 \label{eq:p-anti-chain}
\end{equation}
and therefore induces $p_*\colon H_n^\R(X)\to H_{n-1}^\R(X)$.

If $X$ is a quandle and $j_n(y_1,\ldots,y_{n-1})=(y_1,y_1,y_2,\ldots,y_{n-1})$, then $\partial_2^\R j_2=0$, and for $n\ge3$ one has
\begin{equation}
 \partial_n^\R j_n=-j_{n-1}\partial_{n-1}^\R.
 \label{eq:j-anti-chain}
\end{equation}
Moreover,
\[
p_nj_n=\operatorname{id},
 \qquad
 \kappa_nj_n=0.
\]
\end{lemma}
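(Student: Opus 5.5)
The plan is to verify each identity directly on generators $\mathbf x=(x_1,\ldots,x_n)$, using the explicit face maps $d_i^0,d_i^1$ and, for the last identity, the cancellation argument already used in Proposition~\ref{prop:kappa-boundary}.

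For \eqref{eq:p-anti-chain}, first observe how $p$ interacts with the faces. For $i\ge2$ we have $p_{n-1}d_i^\epsilon\mathbf x=d_{i-1}^\epsilon p_n\mathbf x$ for $\epsilon=0,1$. For $d_{i-1}^0$ this is clear. For $d_i^1$, the first coordinate $x_1\lhd x_i$ is deleted by $p_{n-1}$, and the remaining coordinates $x_2\lhd x_i,\ldots,x_{i-1}\lhd x_i$ are exactly those acted on by $d_{i-1}^1$ applied to $(x_2,\ldots,x_n)$. The $i=1$ term of $\partial_n^\R$ vanishes because $d_1^1=d_1^0$. We then get
\[
p_{n-1}\partial_n^\R\mathbf x=\sum_{i=2}^n(-1)^i\bigl(d_{i-1}^1-d_{i-1}^0\bigr)p_n\mathbf x=-\partial_{n-1}^\R p_n\mathbf x ,
\]
after reindexing with $i-1$. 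In the case $n=1$ both sides are zero. An anti-chain map still sends cycles to cycles and boundaries to boundaries, so $p_*$ is well defined.

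For $j$, write $\mathbf z=j_n(\mathbf y)=(y_1,y_1,y_2,\ldots)$ and compute $\partial_n^\R\mathbf z$ face by face.
\begin{itemize}
\item The $i=1$ term is zero.
\item For $i=2$, the quandle axiom gives $d_2^1\mathbf z=d_2^0\mathbf z=(y_1,y_2,\ldots)$, so the $i=2$ term also vanishes.
\item For $i\ge3$, we have $d_i^\epsilon\mathbf z=j_{n-1}d_{i-1}^\epsilon\mathbf y$. This holds because the first two coordinates are both $y_1$, and under $d^1$ both become $y_1\lhd y_{i-1}$.
\end{itemize}
The sign $(-1)^i=-(-1)^{i-1}$ then gives \eqref{eq:j-anti-chain}. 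When $n=2$ only the terms $i=1,2$ occur, so $\partial_2^\R j_2=0$. The identity $p_nj_n=\mathrm{id}$ is immediate from the definitions.

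Finally, $\kappa_nj_n=0$ is exactly the last assertion of Proposition~\ref{prop:kappa-boundary}, since $j_n(\mathbf y)$ has equal adjacent coordinates $x_1=x_2$ and is therefore degenerate. The only step that needs care is the sign bookkeeping when reindexing. The one genuinely structural point is the quandle axiom killing the $i=2$ face, and that requires no real effort.
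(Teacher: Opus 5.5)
Your proposal is correct and follows the paper's proof essentially verbatim. It uses the same shifted face relations $p_{n-1}d_i^\epsilon=d_{i-1}^\epsilon p_n$ and $d_i^\epsilon j_n=j_{n-1}d_{i-1}^\epsilon$, kills the $i=1$ term via $d_1^1=d_1^0$ and the $i=2$ term of $\partial_n^\R j_n$ via idempotency, and cites the degenerate-chain assertion of Proposition~\ref{prop:kappa-boundary} for $\kappa_nj_n=0$. (When you reindex for $j$, the missing $k=1$ term of $j_{n-1}\partial_{n-1}^\R$ vanishes for the same reason $d_1^1=d_1^0$; you stated this for $p$ but not again here.)
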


\begin{proof}
For $p$, the equality $d_1^1=d_1^0$ holds, while for $i\ge2$ one has $p_{n-1}d_i^\epsilon=d_{i-1}^\epsilon p_n$; the shift in the index gives~\eqref{eq:p-anti-chain}. For $j$, the term with $i=1$ vanishes, and the term with $i=2$ vanishes because $y_1\lhd y_1=y_1$. For $i\ge3$, one has $d_i^\epsilon j_n=j_{n-1}d_{i-1}^\epsilon$, which gives~\eqref{eq:j-anti-chain}. The last two identities follow immediately from the definitions and from the assertion on degenerate chains in Proposition~\ref{prop:kappa-boundary}.
\end{proof}

We first treat connected quandles.

\begin{proposition}
\label{prop:connected-quandle-annihilation}
Let $X$ be a connected quandle with $\Type(X)=t<\infty$. Then, for every $n\ge2$,
\[
        t\,c_{n*}=0
        \colon H_n^\R(X)\longrightarrow H_n^{\gr}(\As(X)).
\]
\end{proposition}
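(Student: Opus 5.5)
Since $X$ is connected, there is a single orbit $\lambda$, so $C_n^{\R,\lambda}(X)=C_n^\R(X)$ for every $n\ge1$. The edge-replacement identity (Proposition~\ref{prop:edge-replacement}) then holds on the whole rack complex in degrees $n\ge2$. Applying it to a cycle $z\in C_n^\R(X)$ gives
\[
 t\kappa_n(z)=-\Sigma_{s}\kappa_{n-1}(p_nz)+\partial_{n+1}^{\gr}K_n^\lambda(z),
\]
with $s=s_\lambda$ central by Lemma~\ref{lem:central-type}. Therefore, on homology,
\[
 t\,c_{n*}=-(\Sigma_s)_*\,c_{n-1*}\,p_*.
\]
By Lemma~\ref{lem:auxiliary-maps}, $p_nz$ is again a cycle, and by Lemma~\ref{lem:center-suspension}, $\Sigma_s$ passes to homology. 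So the plan reduces to showing that $(\Sigma_s)_*c_{n-1*}p_*$ vanishes on $H_n^\R(X)$. A cleaner target is to show $c_{n-1*}p_*=0$ on every class of $H_n^\R(X)$ outright.

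To do this I use the quandle hypothesis through the degeneracy insertion $j_n$. Lemma~\ref{lem:auxiliary-maps} gives $p_nj_n=\mathrm{id}$ and $\kappa_nj_n=0$. I apply the edge-replacement identity to the chain $j_n(w)$, where $w=p_nz$ is an $(n-1)$-cycle. This is legitimate because the first coordinate of $j_n(w)$ lies in $X_\lambda=X$, and because $j_n$ anti-commutes with the boundary by \eqref{eq:j-anti-chain}, with $\partial_2^\R j_2=0$ when $n=2$; hence $j_n(w)$ is a cycle. Since $\kappa_nj_n(w)=0$ and $p_nj_n(w)=w$, the identity evaluated on the cycle $j_n(w)$ yields
\[
 \Sigma_s\kappa_{n-1}(w)=\partial^{\gr}_{n+1}K_n^\lambda(j_nw).
\]
So $(\Sigma_s)_*c_{n-1*}[w]=0$ for every class $[w]$ in the image of $p_*$, and in fact for every $(n-1)$-cycle. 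Substituting $w=p_nz$ into the first display gives $t\kappa_n(z)\in\operatorname{Im}\partial^{\gr}_{n+1}$, i.e.\ $t\,c_{n*}[z]=0$.

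The point that needs care is the low-degree case $n=2$. There $w=p_2z$ lies in $C_1^\R(X)$, where every chain is a cycle since $\partial_1^\R=0$, and $j_2(w)=(y,y)$ summed linearly. The edge-replacement identity is stated for $n\ge2$, so it applies to $j_2(w)\in C_2^\R(X)$, and its right-hand term $K_1^\lambda\partial_2^\R j_2(w)$ vanishes because $\partial_2^\R j_2=0$. For $n\ge3$, the correction term $K_{n-1}^\lambda\partial_n^\R j_n(w)=-K_{n-1}^\lambda j_{n-1}\partial_{n-1}^\R w$ vanishes because $w$ is a cycle. I also need to check signs, namely that $p_nj_n=\mathrm{id}$ holds on the nose so that the suspension term is exactly $\Sigma_s\kappa_{n-1}(w)$; this is a routine bookkeeping step. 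With these cases handled, the argument is complete for connected quandles.
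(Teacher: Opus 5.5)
Your argument is correct and is essentially the paper's proof. Applying the edge-replacement identity separately to $z$ and to $j_np_nz$ and then subtracting is, by linearity, the same as the paper's single application to the cycle $\zeta=z-j_np_nz$, which satisfies $p_n\zeta=0$ and $\kappa_n\zeta=\kappa_nz$.

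One small correction: your aside that one could aim to prove $c_{n-1*}p_*=0$ outright is false. For $n=2$, $p_*[(x,x)]=[x]$ and $c_{1*}[x]=-[e_x]\neq0$. You never use that aside, though; what you actually prove, $(\Sigma_s)_*c_{n-1*}=0$ on every $(n-1)$-cycle, is exactly what is needed.
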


\begin{proof}
Let $\xi$ be a rack $n$-cycle, and put $\eta=p_n\xi$ and
$\zeta=\xi-j_n\eta$. By Lemma~\ref{lem:auxiliary-maps}, the chain $\zeta$ is a cycle, $p_n\zeta=0$, and $\kappa_n(\zeta)=\kappa_n(\xi)$.
Since $X$ has only one orbit, write the corresponding map $K_n^\lambda$ simply as $K_n$. Applying Proposition~\ref{prop:edge-replacement} to $\zeta$ and using $p_n\zeta=0$ and $\partial_n^\R\zeta=0$, we obtain
\[
 t\kappa_n(\xi)=t\kappa_n(\zeta)=\partial_{n+1}^{\gr}K_n(\zeta).
\]
Thus $t c_{n*}=0$.
\end{proof}

For a general rack, the idempotency used to eliminate the $i=2$ term in~\eqref{eq:j-anti-chain} is unavailable, so the preceding degenerate-insertion argument cannot be applied directly. We therefore pass to the quandle reflection.

\begin{proof}[Proof of Theorem~\ref{thm:main}(1)]
Let $X$ be a connected rack and put $d=\Type(X^{\mathrm q})$. By Proposition~\ref{prop:quandle-reflection}, $d\mid t$, and we recall an isomorphism $ \Phi_X$ from $\As(X)$ to $\As(X^{\mathrm q})$. Proposition~\ref{prop:connected-quandle-annihilation} and~\eqref{eq:quandle-reflection-classifying} give
\[
 (\Phi_X)_*\bigl(d\,c_{n*}^X\bigr)
 =d\,c_{n*}^{X^{\mathrm q}}(q_X)_*=0.
\]
Hence $d\,c_{n*}^X=0$ and, in particular, $t\,c_{n*}^X=0$.
\end{proof}

We next prove the torsion statement without assuming connectedness. The subcomplexes defined in Section~3 by the orbit of the first coordinate give, in positive degrees,
\begin{equation}
        C_n^\R(X)=
        \bigoplus_{\lambda\in\OO(X)}C_n^{\R,\lambda}(X),
        \qquad
        H_n^\R(X)\cong
        \bigoplus_{\lambda\in\OO(X)}
        H_n(C_*^{\R,\lambda}(X)).
\label{eq:first-orbit-decomposition}
\end{equation}

Since $\partial_2^\R(x,y)=(x\lhd y)-(x)$, the group $H_1^\R(X)$ is the free abelian group obtained by identifying elements in the same $\As(X)$-orbit. Thus
$  H_1^\R(X)\cong\ZZ[\OO(X)]$
and in particular $H_1^\R(X)$ is torsion-free.

\begin{proof}[Proof of Theorem~\ref{thm:main}(2)]
We argue by induction on $n$. The case $n=1$ follows from the isomorphism $  H_1^\R(X)\cong\ZZ[\OO(X)]$. Let $n\ge2$, and write $\alpha\in\Tor H_n^\R(X)$ as $\alpha=\sum_\lambda\alpha_\lambda$ using~\eqref{eq:first-orbit-decomposition}. Each $\alpha_\lambda$ is again torsion.

Passing to homology in Proposition~\ref{prop:edge-replacement} gives
\begin{equation}
 t\,c_{n*}(\alpha_\lambda)
 =-(\Sigma_{s_\lambda})_*c_{n-1,*}(p_*\alpha_\lambda).
 \label{eq:orbit-induction-step}
\end{equation}
Since $p_*\alpha_\lambda$ is torsion, the induction hypothesis yields
$t^{n-2}c_{n-1,*}(p_*\alpha_\lambda)=0$. Multiplying~\eqref{eq:orbit-induction-step} by $t^{n-2}$ gives $t^{n-1}c_{n*}(\alpha_\lambda)=0$. Summing over $\lambda$ proves the assertion.
\end{proof}

We note that, if $d=\Type(X^{\mathrm q})$, then the preceding proof already gives $d\,c_{n*}=0$ in the connected case. In general, $q_X$ sends torsion classes to torsion classes and $\Phi_X$ is an isomorphism of associated groups. Applying Theorem~\ref{thm:main}(2) to $X^{\mathrm q}$ and using
naturality therefore gives $d^{n-1}c_{n*}=0$ on $\Tor H_n^\R(X)$.

\section{The image on the free part for finite racks}
\label{sec:free-part-image}

We now describe the free part of the classifying map in terms of orbit data.
We first prove an integral orbit-exterior formula for an arbitrary rack, then
interpret the torsion-free quotients for a finite rack as finite-index
lattices in the orbit tensor and orbit exterior powers. Finally, when the
orbits are homogeneous, we compute the images of the explicit cycles of
Litherland--Nelson.

Set $A_X=\ZZ[\OO(X)]$, and denote by $u_\lambda$ the basis element
corresponding to $\lambda\in\OO(X)$. Regard
$\mathcal T_X=\OO(X)$ as the trivial rack with operation
$\lambda\lhd\mu=\lambda$, and let
$\pi_X\colon X\to\mathcal T_X$ be the orbit map. Define
\[
\varepsilon_X\colon\As(X)\longrightarrow A_X,
 \qquad
 \varepsilon_X(e_x)=u_{[x]}.
\]
This is precisely $\As(\pi_X)$. Upon abelianizing the presentation of the
associated group, the relations merely identify generators lying in the same
orbit. Hence $\varepsilon_X$ induces the canonical isomorphism
$\As(X)_{\ab}\cong A_X$. We also let
\[
 \operatorname{Alt}_n\colon A_X^{\otimes n}\longrightarrow\Lambda^nA_X,
 \qquad
 u_{\lambda_1}\otimes\cdots\otimes u_{\lambda_n}
 \longmapsto
 u_{\lambda_1}\wedge\cdots\wedge u_{\lambda_n},
\]
be the canonical projection onto the exterior power.

\begin{proposition}[Orbit-exterior formula]
\label{prop:orbit-exterior-formula}
For every rack $X$ and every $n\ge1$,
\begin{equation}
 (\varepsilon_X)_*c_{n*}
 =(-1)^n\operatorname{Alt}_n(\pi_X)_*
 \colon H_n^\R(X)\longrightarrow\Lambda^nA_X.
 \label{eq:orbit-exterior-formula}
\end{equation}
Here we use
$H_n^\R(\mathcal T_X)=A_X^{\otimes n}$ and
$H_n^{\gr}(A_X)=\Lambda^nA_X$.
\end{proposition}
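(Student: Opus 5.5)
The plan is to reduce to the trivial rack $\mathcal T_X$ by naturality, and then compute directly for a free abelian group.

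\emph{Step 1: naturality.} The orbit map $\pi_X\colon X\to\mathcal T_X$ is a rack homomorphism, because $[x\lhd y]=[x]$. Its induced group homomorphism is $\varepsilon_X$, since $\As(\mathcal T_X)$ is the free abelian group $A_X$: the relations $e_\lambda e_\mu=e_\mu e_\lambda$ just say the generators commute. Applying $\varepsilon_X$ to each vertex of~\eqref{eq:kappa-definition} sends $g_I(\mathbf x)$ to $g_I^{\mathcal T_X}(\pi_X\mathbf x)$, exactly as in the proof of Proposition~\ref{prop:quandle-reflection}(2). This gives the chain-level identity
\[
(\varepsilon_X)_\#\kappa_n^X=\kappa_n^{\mathcal T_X}(\pi_X)_{\#}.
\]
It therefore suffices to prove $c^{\mathcal T}_{n*}=(-1)^n\operatorname{Alt}_n$ for the trivial rack $\mathcal T=\mathcal T_X$.

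\emph{Step 2: the trivial rack.} For $\mathcal T$ all boundaries $\partial^\R$ vanish, because $d_i^1=d_i^0$. Hence $H_n^\R(\mathcal T)=C_n^\R(\mathcal T)=A_X^{\otimes n}$, with $(\lambda_1,\dots,\lambda_n)\mapsto u_{\lambda_1}\otimes\cdots\otimes u_{\lambda_n}$. For $G=A_X$ free abelian, I will use the standard identification $H_n^{\gr}(G)\cong\Lambda^nG$. This is given by the Pontryagin product: $g_1\wedge\cdots\wedge g_n$ corresponds to the shuffle product $[g_1]\ast\cdots\ast[g_n]$ in the normalized bar complex, that is, $\sum_{\sigma}\sgn(\sigma)[g_{\sigma(1)}|\cdots|g_{\sigma(n)}]$. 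This product is a cycle because $G$ is abelian, and it is the class mapping to the wedge.

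\emph{Step 3: compare with $\kappa_n$.} For $\mathcal T$ we have $g_I=\prod_{i\in I}e_{\lambda_i}$, with all factors commuting. The $\sigma$-term of~\eqref{eq:kappa-definition}, after passing to inhomogeneous coordinates, is therefore $[e_{\lambda_{\sigma(1)}}|e_{\lambda_{\sigma(2)}}|\cdots|e_{\lambda_{\sigma(n)}}]$. Hence
\[
\kappa_n(\lambda_1,\ldots,\lambda_n)=(-1)^n\sum_\sigma\sgn(\sigma)[e_{\lambda_{\sigma(1)}}|\cdots|e_{\lambda_{\sigma(n)}}],
\]
which is $(-1)^n$ times the shuffle-product cycle representing $u_{\lambda_1}\wedge\cdots\wedge u_{\lambda_n}$. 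This is consistent with $\kappa_1(x)=-[e_x]$. Composing with Step 1 yields~\eqref{eq:orbit-exterior-formula}.

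\emph{Main obstacle.} The delicate point is the sign and normalization of the identification $H_n^{\gr}(A_X)=\Lambda^nA_X$. Its convention must be fixed so that the Pontryagin/shuffle class $[g_1]\ast\cdots\ast[g_n]$ corresponds to $g_1\wedge\cdots\wedge g_n$, with the same $\sgn(\sigma)$ ordering used in the bar complex. Otherwise the computation produces $\pm$ of the intended map. I would state this convention explicitly as the one that defines the identification. I would then check it in degree $2$, where $[a|b]-[b|a]$ must map to $a\wedge b$ under the commutator description $H_2(G)\cong\Lambda^2G_{\ab}$.
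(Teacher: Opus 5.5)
Your proposal is correct and follows the paper's proof essentially verbatim. The chain-level naturality $(\varepsilon_X)_\#\kappa_n^X=\kappa_n^{\mathcal T_X}(\pi_X)_n$ reduces the statement to the trivial rack. There $\kappa_n(\lambda_1,\ldots,\lambda_n)$ becomes $(-1)^n\sum_{\sigma}\sgn(\sigma)[u_{\lambda_{\sigma(1)}}\mid\cdots\mid u_{\lambda_{\sigma(n)}}]$, the shuffle (Pontryagin) representative of $(-1)^n u_{\lambda_1}\wedge\cdots\wedge u_{\lambda_n}$. You are also right that the sign of $H_n^{\gr}(A_X)\cong\Lambda^nA_X$ has to be fixed by this Pontryagin-product convention; that is what the paper's phrase ``the standard isomorphism'' implicitly assumes.
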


\begin{proof}
Naturality of the classifying chain gives the chain-level identity
\[
 (\varepsilon_X)_\#\kappa_n^X
 =\kappa_n^{\mathcal T_X}(\pi_X)_n.
\]
The associated group of the trivial rack $\mathcal T_X$ is the free abelian
group $A_X$, and~\eqref{eq:kappa-definition}, evaluated on
$(\lambda_1,\ldots,\lambda_n)$, becomes the inhomogeneous bar cycle
\[
 (-1)^n\sum_{\sigma\in S_n}\sgn(\sigma)
 [u_{\lambda_{\sigma(1)}}\mid\cdots\mid u_{\lambda_{\sigma(n)}}].
\]
Under the standard isomorphism
$H_n^{\gr}(A_X)\cong\Lambda^nA_X$, this cycle represents
$(-1)^n u_{\lambda_1}\wedge\cdots\wedge u_{\lambda_n}$, proving
\eqref{eq:orbit-exterior-formula}.
\end{proof}

For the remainder of this section, let $X$ be a finite rack. For a finitely
generated abelian group $M$, put $M_{\mathrm{fr}}:=M/\Tor M$. Since the choice
of a free direct summand is not canonical in general, by the free part we
always mean this canonical torsion-free quotient. The proof below shows that
both $H_n^\R(X)$ and $H_n^{\gr}(\As(X))$ are finitely generated, and the
classifying map induces
\[
 \overline c_{n*}\colon H_n^\R(X)_{\mathrm{fr}}
 \longrightarrow H_n^{\gr}(\As(X))_{\mathrm{fr}}.
\]
This records the map induced by $c_{n*}$ on torsion-free quotients, but it
does not determine possible torsion in the image of $c_{n*}$.

\begin{theorem}[The free part for a finite rack]
\label{thm:finite-rack-free-image}
Let $X$ be a finite rack, and set $m=|\OO(X)|$. For every $n\ge1$, the maps
$(\pi_X)_*$ and $(\varepsilon_X)_*$ induce injective homomorphisms with finite
cokernels
\[
 \overline\pi_{n*}\colon
 H_n^\R(X)_{\mathrm{fr}}\lhook\joinrel\longrightarrow A_X^{\otimes n},
 \qquad
 \overline\varepsilon_{n*}\colon
 H_n^{\gr}(\As(X))_{\mathrm{fr}}\lhook\joinrel\longrightarrow\Lambda^nA_X,
\]
and these satisfy
\begin{equation}
 \overline\varepsilon_{n*}\,\overline c_{n*}
 =(-1)^n\operatorname{Alt}_n\,\overline\pi_{n*}.
 \label{eq:free-part-exterior-formula}
\end{equation}
Consequently, over $\QQ$, after identifying the source and target via
$\overline\pi_{n*}$ and $\overline\varepsilon_{n*}$,
\[
 c_{n*}\otimes\operatorname{id}_{\QQ}
 =(-1)^n\operatorname{Alt}_n
 \colon
 (A_X\otimes\QQ)^{\otimes n}
 \longrightarrow
 \Lambda^n(A_X\otimes\QQ).
\]
In particular,
\[
\begin{aligned}
 \rank H_n^\R(X)_{\mathrm{fr}}&=m^n,\\
 \rank H_n^{\gr}(\As(X))_{\mathrm{fr}}
 &=\rank\operatorname{Im}(\overline c_{n*})=\binom mn,\\
 \rank\Ker(\overline c_{n*})&=m^n-\binom mn,
\end{aligned}
\]
where $\binom mn=0$ when $n>m$. Moreover,
$\Coker(\overline c_{n*})$ is finite.

For $\boldsymbol\lambda=(\lambda_1,\ldots,\lambda_n)\in\OO(X)^n$, define
\[
\Theta_{\boldsymbol\lambda}
 :=\sum_{x_1\in X_{\lambda_1},\,\ldots,\,x_n\in X_{\lambda_n}}
 (x_1,\ldots,x_n)
 \in C_n^\R(X).
\]
Then $\Theta_{\boldsymbol\lambda}$ is a cycle, and the classes
$[\Theta_{\boldsymbol\lambda}]$ form a basis of $H_n^\R(X;\QQ)$. Hence their
images $[\Theta_{\boldsymbol\lambda}]_{\mathrm{fr}}$ form a basis of a
finite-index sublattice of $H_n^\R(X)_{\mathrm{fr}}$.
\end{theorem}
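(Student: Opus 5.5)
Once the two rational computations are in place, the whole statement follows from the orbit-exterior formula (Proposition~\ref{prop:orbit-exterior-formula}). The first computation is that $(\pi_X)_*\otimes\QQ\colon H_n^\R(X;\QQ)\to H_n^\R(\mathcal T_X;\QQ)=(A_X\otimes\QQ)^{\otimes n}$ is an isomorphism. The second is that $(\varepsilon_X)_*\otimes\QQ\colon H_n^{\gr}(\As(X);\QQ)\to\Lambda^n(A_X\otimes\QQ)$ is an isomorphism.

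Given these, finite generation of both integral groups yields the two lattice embeddings. Rational injectivity shows that the kernel is torsion, so the induced maps on $(\cdot)_{\mathrm{fr}}$ are injective. Rational surjectivity onto a lattice of the same rank shows that the cokernels are finite. Identity~\eqref{eq:free-part-exterior-formula} is then \eqref{eq:orbit-exterior-formula} passed to torsion-free quotients. The rank statements follow because $\operatorname{Alt}_n\otimes\QQ$ is surjective of rank $\binom mn$. Finiteness of $\Coker(\overline c_{n*})$ follows because $\overline c_{n*}$ has full rank in the target.

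For the rack side, I use the $\Theta_{\boldsymbol\lambda}$ directly. For each $i$, the map $R_{x_i}$ permutes each orbit, so summing over $x_i\in X_{\lambda_i}$ gives $\sum d_i^1\Theta_{\boldsymbol\lambda}=|X_{\lambda_i}|\cdot(\text{same as }\sum d_i^0)$ termwise. Hence $\Theta_{\boldsymbol\lambda}$ is a cycle. Its image under $\pi_X$ is $\prod_i|X_{\lambda_i}|\,u_{\lambda_1}\otimes\cdots\otimes u_{\lambda_n}$, so $(\pi_X)_*\otimes\QQ$ is surjective. For injectivity over $\QQ$, I would invoke the known computation (Etingof--Graña, Litherland--Nelson) that $\dim H_n^\R(X;\QQ)=m^n$ for finite $X$. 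Alternatively, I would average over the finite group $\Inn(X)$ acting on chains: this action is homotopically trivial, and the averaging map projects rationally onto orbit sums. With the dimension in hand, the $[\Theta_{\boldsymbol\lambda}]$ are a basis, and the final sentence of the theorem follows.

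The group side is the main obstacle. Set $G=\As(X)$. Its subgroup generated by the central elements $s_\lambda$ (Lemma~\ref{lem:central-type}, with $t$ a finite exponent since $X$ is finite) is central. Consider the map $G\to \Inn(X)\times A_X$ given by $e_x\mapsto(R_x,u_{[x]})$. Its kernel is contained in the subgroup generated by the $s_\lambda$, and it has finite image, namely a finite-index subgroup. I would then use a central extension $1\to N\to G\to Q\to1$ with $Q$ finite and $N$ finitely generated free abelian of rank $m$, where $N$ is generated by the $s_\lambda$ (which map to $t u_\lambda$ in $A_X$, forcing $N$ to have rank $m$). The Lyndon--Hochschild--Serre spectral sequence with $\QQ$ coefficients collapses to $H_n(N;\QQ)_Q=\Lambda^n(N\otimes\QQ)$, since $Q$ is finite and acts trivially on $N$ by centrality. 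The composite $N\to G\to A_X$ is rationally an isomorphism, so $(\varepsilon_X)_*\otimes\QQ$ is an isomorphism. The same argument gives finite generation: $G$ is finitely generated and virtually $\ZZ^m$, so its homology is finitely generated. The delicate point is showing that the kernel of $G\to\Inn(X)$ is exactly central and finitely generated. I would try to prove that it lies in the center via $e_{x\cdot g}=g^{-1}e_xg$, and then that it is abelian of rank $m$ using $\varepsilon_X$.
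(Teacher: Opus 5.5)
Your rack-side argument (the cycles $\Theta_{\boldsymbol\lambda}$, surjectivity of $(\pi_X)_*\otimes\QQ$, the Etingof--Gra\~na dimension count) and the passage to torsion-free quotients are the same as in the paper. The gap is on the group side, at the point you yourself call delicate: the finiteness of $Q=G/N$ for $N=\langle s_\lambda\rangle$.

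\textbf{The containment claim is false.} You justify finiteness of $Q$ by asserting that the kernel of $G\to\Inn(X)\times A_X$, $e_x\mapsto(R_x,u_{[x]})$, lies in $N$. The elements $\varepsilon_X(s_\lambda)=t\,u_\lambda$ are linearly independent, so $N$ is free abelian and $N\cap[G,G]=1$. Your claim would therefore force $\Ker\rho\cap[G,G]=1$, which fails in general.

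For a counterexample, take the tetrahedral quandle $Q_{M,T}$ with $M=\mathbb F_4$ and $T$ multiplication by a primitive cube root of unity. Here $\Lambda^2M\cong\ZZ/2$ and $\Lambda^2T=\mathrm{id}$, so $f=0$. By Clauwens's presentation and Lemma~\ref{lem:alexander-extension}, the group $S\cong\ZZ/2$ is central and equals $[\Ker p,\Ker p]=[[G,G],[G,G]]$. The map $\rho$ kills $S$, because $\rho([G,G])$ consists of translations of $M$ and is therefore abelian. So the kernel is a nontrivial finite group not contained in $N$.

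\textbf{What is actually needed.} You need $\Ker\rho\cap[G,G]$ to be finite, equivalently $\rank\Ker\rho=m$. Your suggestion to get this ``using $\varepsilon_X$'' only gives $\rank\Ker\rho\ge m$. The reverse inequality is exactly the Schur-type input. The paper supplies it with Schur's theorem: $G/Z(G)$ finite implies $[G,G]$ finite. It then runs the Lyndon--Hochschild--Serre spectral sequence for $1\to[G,G]\to G\to A_X\to1$; because this kernel is finite, $(\varepsilon_X)_*\otimes\QQ$ is an isomorphism directly.

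\textbf{A repair that avoids Schur.} Take the central subgroup to be $Z=\Ker\rho$ itself. It is central by $e_{x\cdot g}=g^{-1}e_xg$, and finitely generated because it has finite index in the finitely generated group $G$.
\begin{itemize}
\item The rational spectral sequence for $1\to Z\to G\to\Inn(X)\to1$ collapses. Hence inclusion induces $\Lambda^n(Z\otimes\QQ)=H_n^{\gr}(Z;\QQ)\cong H_n^{\gr}(G;\QQ)$.
\item For $n=1$ this says $\varepsilon_X|_Z\otimes\QQ\colon Z\otimes\QQ\to A_X\otimes\QQ$ is an isomorphism. That is precisely the missing rank statement; in effect it reproves the rational content of Schur's theorem.
\item Naturality of $\Lambda^n$ then shows $(\varepsilon_X)_*\otimes\QQ$ is an isomorphism in every degree. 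Finiteness of $G/N$ also follows, but is no longer needed.
\end{itemize}
This route trades Schur's theorem for identifying $(\varepsilon_X)_*$ through a composite rather than as an edge map.

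\textbf{The averaging alternative on the rack side.} Averaging over the diagonal $\Inn(X)$-action would not by itself give $\dim_\QQ H_n^\R(X;\QQ)=m^n$. It only reduces to the subcomplex spanned by sums over $\Inn(X)$-orbits in $X^n$. These orbits are in general finer than the products $X_{\lambda_1}\times\cdots\times X_{\lambda_n}$; for instance, for connected $X$ the diagonal of $X^2$ is a separate orbit union. So the Etingof--Gra\~na computation you also cite is genuinely needed there.
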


\begin{proof}
For fixed $x_i$, the map $R_{x_i}$ is a permutation preserving every orbit.
Therefore
$d_i^0\Theta_{\boldsymbol\lambda}=d_i^1\Theta_{\boldsymbol\lambda}$ for all
$i$, so $\Theta_{\boldsymbol\lambda}$ is a cycle. Moreover,
\[
(\pi_X)_*[\Theta_{\boldsymbol\lambda}]
 =\Bigl(\prod_{i:\,1\le i\le n}|X_{\lambda_i}|\Bigr)
 u_{\lambda_1}\otimes\cdots\otimes u_{\lambda_n}.
\]
Thus $(\pi_X)_*\otimes\QQ$ is surjective. Etingof--Gra\~na computed rational rack cohomology. Since the rational rack chain groups of a finite rack are
finite-dimensional, the universal coefficient theorem gives
\[
 H^n_\R(X;\QQ)
 \cong\operatorname{Hom}_{\QQ}(H_n^\R(X;\QQ),\QQ).
\]
Their formula therefore yields
$\dim_\QQ H_n^\R(X;\QQ)=m^n$
\cite[Theorem~4.2 and Corollary~4.3]{EtingofGrana}. Hence
$(\pi_X)_*\otimes\QQ$ is an isomorphism, and the orbit-sum classes form a
rational basis.

Put $G=\As(X)$. The kernel of the action homomorphism
$G\to\Inn(X)$ is central, and $\Inn(X)$ is finite. Hence $G/Z(G)$ is finite.
Schur's theorem states that if $H/Z(H)$ is finite for a group $H$, then the
commutator subgroup $[H,H]$ is finite
\cite[Theorem~10.1.4]{RobinsonGroupTheory}. Thus $[G,G]$ is finite. Since
$\varepsilon_X$ is the abelianization homomorphism,
\[
 1\longrightarrow [G,G]\longrightarrow G
 \xrightarrow{\varepsilon_X}A_X\longrightarrow1
\]
has finite kernel. The Lyndon--Hochschild--Serre spectral sequence gives
\cite[Chapter~VII, \S6]{Brown}
\[
 (\varepsilon_X)_*\otimes\QQ\colon
 H_n^{\gr}(G;\QQ)\xrightarrow{\cong}
 \Lambda^n(A_X\otimes\QQ).
\]
The same spectral sequence with integral coefficients shows that
$H_n^{\gr}(G)$ is finitely generated.

It follows that $\overline\pi_{n*}$ and $\overline\varepsilon_{n*}$ are
injective with finite cokernel. Equation~\eqref{eq:free-part-exterior-formula}
follows from Proposition~\ref{prop:orbit-exterior-formula}, and the remaining
assertions follow from the surjectivity of
$\operatorname{Alt}_n\otimes\QQ$.
\end{proof}

Following Litherland--Nelson, a finite rack $X$ is said to have
\emph{homogeneous orbits} if, whenever $a,b$ belong to the same orbit
$X_\lambda$, the number
\[
 N(a,b):=|\{y\in X\mid a\lhd y=b\}|
\]
depends only on $\lambda$, not on $a$ or $b$ \cite[\S1]{LitherlandNelson}.
In this case $N(a,b)=|X|/|X_\lambda|$; denote this integer by $N_\lambda$.

\begin{corollary}[Images of the Litherland--Nelson cycles]
Let $X$ be a finite rack with homogeneous orbits. For
$\boldsymbol\lambda=(\lambda_1,\ldots,\lambda_n)\in\OO(X)^n$, set
\[
\Psi_{\boldsymbol\lambda}
 :=\Bigl(\prod_{i:\,1\le i\le n}N_{\lambda_i}\Bigr)
 \sum_{x_1\in X_{\lambda_1},\,\ldots,\,x_n\in X_{\lambda_n}}
 (x_1,\ldots,x_n)
 \in C_n^\R(X).
\]
Then $\Psi_{\boldsymbol\lambda}$ is a cycle, and the classes
$[\Psi_{\boldsymbol\lambda}]\otimes1$, as
$\boldsymbol\lambda$ ranges over $\OO(X)^n$, form a basis of
$H_n^\R(X;\QQ)$. If
$[\Psi_{\boldsymbol\lambda}]_{\mathrm{fr}}$ denotes the image in the
torsion-free quotient, then
\[
\overline\varepsilon_{n*}\,\overline c_{n*}
 \bigl([\Psi_{\boldsymbol\lambda}]_{\mathrm{fr}}\bigr)
 =(-1)^n|X|^n
 u_{\lambda_1}\wedge\cdots\wedge u_{\lambda_n}.
\]
Thus, if the $\lambda_i$ are not all distinct, then
$\overline c_{n*}([\Psi_{\boldsymbol\lambda}]_{\mathrm{fr}})=0$. If
$\lambda_1,\ldots,\lambda_n$ are distinct, this image is nonzero. In
particular, if $n\le m$ and a total order is fixed on $\OO(X)$, the elements
\[
 \overline c_{n*}
 \bigl([\Psi_{\lambda_1,\ldots,\lambda_n}]_{\mathrm{fr}}\bigr),
 \qquad \lambda_1<\cdots<\lambda_n,
\]
generate a finite-index sublattice of
$H_n^{\gr}(\As(X))_{\mathrm{fr}}$.
\end{corollary}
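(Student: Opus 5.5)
The plan is to reduce everything to Theorem~\ref{thm:finite-rack-free-image} by noting that $\Psi_{\boldsymbol\lambda}$ is just an integer multiple of the orbit-sum chain: $\Psi_{\boldsymbol\lambda}=\bigl(\prod_i N_{\lambda_i}\bigr)\Theta_{\boldsymbol\lambda}$.

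\textbf{Cycle and basis.} Since $\Theta_{\boldsymbol\lambda}$ is a cycle by Theorem~\ref{thm:finite-rack-free-image}, so is $\Psi_{\boldsymbol\lambda}$. The classes $[\Theta_{\boldsymbol\lambda}]\otimes 1$ form a basis of $H_n^\R(X;\QQ)$. Each $N_{\lambda_i}=|X|/|X_{\lambda_i}|$ is a positive integer, so rescaling each basis vector by the nonzero rational $\prod_i N_{\lambda_i}$ again gives a basis. This settles the first two assertions.

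\textbf{The image formula.} I would apply \eqref{eq:free-part-exterior-formula}, or equivalently Proposition~\ref{prop:orbit-exterior-formula}, before passing to torsion-free quotients. The proof of Theorem~\ref{thm:finite-rack-free-image} gives
\[
(\pi_X)_*[\Theta_{\boldsymbol\lambda}]=\Bigl(\prod_i|X_{\lambda_i}|\Bigr)u_{\lambda_1}\otimes\cdots\otimes u_{\lambda_n}.
\]
Multiplying by $\prod_i N_{\lambda_i}$ and using $N_{\lambda_i}|X_{\lambda_i}|=|X|$ yields
\[
(\pi_X)_*[\Psi_{\boldsymbol\lambda}]=|X|^n\,u_{\lambda_1}\otimes\cdots\otimes u_{\lambda_n}.
\]
Applying $(-1)^n\operatorname{Alt}_n$ then gives the stated value $(-1)^n|X|^n u_{\lambda_1}\wedge\cdots\wedge u_{\lambda_n}$.

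The only thing to check is the identity $N(a,b)=|X|/|X_\lambda|$ under the homogeneity hypothesis. For fixed $a\in X_\lambda$, summing $N(a,b)$ over $b\in X_\lambda$ counts every $y\in X$ exactly once, because $a\lhd y$ always lies in $X_\lambda$. So $|X_\lambda|\,N_\lambda=|X|$. The paper states this identity, so I would simply cite it.

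\textbf{Consequences.} If two of the $\lambda_i$ coincide, the wedge vanishes. Since $\overline\varepsilon_{n*}$ is injective, this gives $\overline c_{n*}([\Psi_{\boldsymbol\lambda}]_{\mathrm{fr}})=0$. If the $\lambda_i$ are distinct, the wedge is a nonzero element of the free module $\Lambda^nA_X$, and $|X|^n\neq0$, so the image is nonzero.

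For the final claim, the elements $|X|^n u_{\lambda_1}\wedge\cdots\wedge u_{\lambda_n}$ with $\lambda_1<\cdots<\lambda_n$ span the sublattice $|X|^n\Lambda^nA_X$, which has finite index in $\Lambda^nA_X$. Its preimage under the injection $\overline\varepsilon_{n*}$ is therefore of finite index in $H_n^{\gr}(\As(X))_{\mathrm{fr}}$. Indeed, the span of the elements $\overline c_{n*}([\Psi])$ maps isomorphically onto a finite-index subgroup of $\Lambda^nA_X$, and $\overline\varepsilon_{n*}$ itself has finite cokernel. A rank count gives the same conclusion: $\binom mn$ independent elements inside a lattice of rank $\binom mn$.

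None of these steps is a real obstacle. The only point needing care is to compute at the level of $(\pi_X)_*$ before taking torsion-free quotients, so that the integral factor $|X|^n$ comes out exactly.
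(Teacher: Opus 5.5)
Your proposal is correct and follows the paper's proof: the paper likewise writes $\Psi_{\boldsymbol\lambda}=\bigl(\prod_i N_{\lambda_i}\bigr)\Theta_{\boldsymbol\lambda}$, substitutes $N_\lambda|X_\lambda|=|X|$ into Theorem~\ref{thm:finite-rack-free-image}, and reads off all the assertions. You have simply supplied details the paper leaves implicit: the counting argument for $N_\lambda|X_\lambda|=|X|$, and the finite-index argument via the injectivity of $\overline\varepsilon_{n*}$.
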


\begin{proof}
Substitute
$\Psi_{\boldsymbol\lambda}=(\prod_{i:\,1\le i\le n}N_{\lambda_i})
\Theta_{\boldsymbol\lambda}$ and
$N_\lambda|X_\lambda|=|X|$ into
Theorem~\ref{thm:finite-rack-free-image}. The assertions now follow
\cite[\S3]{LitherlandNelson}.
\end{proof}

These classes form a rational basis; equivalently, their images in
the torsion-free quotient form a basis of a finite-index sublattice.
They need not form a primitive integral basis. The homogeneous-orbit
assumption is used only to obtain the particular integral
normalization of the explicit Litherland--Nelson cycles; the orbit-sum
basis and Theorem~\ref{thm:finite-rack-free-image} do not require this
assumption.

Moreover, \eqref{eq:free-part-exterior-formula} describes only the map
induced on torsion-free quotients. Thus, even when some orbit indices
coincide, the class $c_{n*}[\Psi_{\boldsymbol\lambda}]$ may be a nonzero
torsion class. In particular, if $X$ is a finite connected rack, then
\[
 H_n^{\gr}(\As(X))_{\mathrm{fr}}=0
 \quad\text{and hence}\quad
 \overline c_{n*}=0
\]
for every $n\ge2$. By contrast, Theorem~\ref{thm:main}(1) only implies
that the actual image of $c_{n*}$ is $t$-torsion; this image may nevertheless
be nonzero.

\section{Further consequences of the main theorem}

We derive three consequences of the main theorem and the edge-replacement
identity. Subsection~\ref{subsec:central-killing-composite} treats composites
with group homomorphisms that kill the central elements determined by the
type, with $\As(X)\to\Inn(X)$ as a special case.
Subsection~\ref{subsec:coxeter-application} applies the same chain computation
to Coxeter quandles and deduces that the second homology of a Coxeter group is
annihilated by $2$. Finally, Subsection~\ref{subsec:H2-away-from-type}
examines the orbit-degree homomorphism and identifies $H_2(\As(X))$ after
inverting $t$ with the exterior square on the orbit set.

\subsection{Composition with homomorphisms killing the central type elements}
\label{subsec:central-killing-composite}

\begin{proposition}[Homomorphisms killing the central type elements]
\label{prop:central-killing-composite}
Let $X$ be a rack, and suppose that
$R_y^t=\operatorname{id}_X$ for every $y\in X$. If a group homomorphism
$f\colon\As(X)\to H$ satisfies
$f((e_x)^t)=1$ for all $x\in X$, then for every $n\ge1$,
\begin{equation}
 t\,f_*c_{n*}=0
 \colon H_n^\R(X)\longrightarrow H_n^{\gr}(H).
 \label{eq:central-killing-composite}
\end{equation}
In particular, for the action homomorphism
$\rho\colon\As(X)\twoheadrightarrow\Inn(X)$,
\[
 t\,\rho_*c_{n*}=0.
\]
\end{proposition}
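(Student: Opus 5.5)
Our plan is to push the edge-replacement identity (Proposition~\ref{prop:edge-replacement}) forward along $f$ and to observe that the central suspension term disappears, because $f(s_\lambda)=1$. Let $f_\#\colon B_*^{\gr}(\As(X))\to B_*^{\gr}(H)$ denote the induced bar-chain map. Since $f$ is applied vertexwise, it commutes with the homogeneous shuffle product: $f_\#(U\diamond V)=f_\#U\diamond f_\#V$ before passing to coinvariants. For $A\in B_m^{\gr}(\As(X))$ this gives
\[
 f_\#\Sigma_{s_\lambda}(A)=\varpi\bigl(\langle1,f(s_\lambda)\rangle\diamond f_\#\widetilde A\bigr)=\Sigma_1(f_\#A),
\]
where $\Sigma_1$ is the central suspension in $H$ attached to $1\in Z(H)$. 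This map is well defined by Lemma~\ref{lem:center-suspension}.

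We apply $f_\#$ to \eqref{eq:edge-replacement} on $C_n^{\R,\lambda}(X)$ for $n\ge2$. Using $f_\#\partial^{\gr}=\partial^{\gr}f_\#$, we obtain
\[
 t\,f_\#\kappa_n+\Sigma_1 f_\#\kappa_{n-1}p_n=\partial^{\gr}_{n+1}(f_\#K^\lambda_n)+(f_\#K^\lambda_{n-1})\partial_n^\R .
\]
Let $\xi$ be a rack $n$-cycle, and decompose $\xi=\sum_\lambda\xi_\lambda$ by \eqref{eq:first-orbit-decomposition}, so that each $\xi_\lambda$ is again a cycle. By Lemma~\ref{lem:auxiliary-maps}, $p_n\xi_\lambda$ is a cycle, and hence so is $f_\#\kappa_{n-1}(p_n\xi_\lambda)$. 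The displayed identity then shows
\[
 t\,[f_\#\kappa_n\xi_\lambda]=-(\Sigma_1)_*[f_\#\kappa_{n-1}p_n\xi_\lambda]\quad\text{in }H_n^{\gr}(H).
\]
This class is zero because $(\Sigma_1)_*=0$ by the last assertion of Lemma~\ref{lem:center-suspension}. Summing over $\lambda$ gives \eqref{eq:central-killing-composite} for $n\ge2$. No connectedness or quandle assumption enters, since the suspension term is killed by $f$ and not by a degeneracy insertion.

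For $n=1$ we argue directly. By the description of $\kappa_1$, $f_*c_{1*}[x]=-[f(e_x)]\in H_1^{\gr}(H)=H_{\ab}$. Its $t$-multiple is $-[f(e_x)^t]=-[f(e_x^t)]=0$. Alternatively, one can run the same argument using $\td\tF_a=\tE_a-L_s$ directly in degree one.

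The special case follows from Lemma~\ref{lem:central-type}. There $\rho((e_x)^t)=R_x^t=\operatorname{id}_X$, so $\rho$ kills every $(e_x)^t$. The main point needing care is the compatibility $f_\#(U\diamond V)=f_\#U\diamond f_\#V$ and its passage to coinvariants. This is routine, because $\mu$ is coordinatewise multiplication and $f$ is a homomorphism, but it is what allows $\Sigma_{s_\lambda}$ to be replaced by $\Sigma_1$.
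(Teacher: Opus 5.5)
Your proposal is correct and is essentially the paper's proof. You push the edge-replacement identity of Proposition~\ref{prop:edge-replacement} forward along the bar-chain map of $f$ and use $f(s_\lambda)=1$ to turn the suspension term into $\Sigma_1$, which vanishes on homology by Lemma~\ref{lem:center-suspension}; you then sum over first-coordinate orbits and treat $n=1$ directly via $\kappa_1(x)=-[e_x]$, exactly as the paper does, with your check that $f_\#$ commutes with shuffles and respects coinvariants (since $f_\#(kV)=f(k)f_\#V$) simply spelling out what the paper states in one phrase.
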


\begin{proof}
For $n=1$, one has $c_{1*}[x]=-[e_x]$, and hence
$t[f(e_x)]=[f(e_x)^t]=0$ in $H_1^{\gr}(H)$. Let $n\ge2$. Decompose a cycle according to the
orbit of its first entry and apply the bar-chain map induced by $f$ to Proposition~\ref{prop:edge-replacement}. This chain map commutes with shuffles, and
$f(s_\lambda)=1$, so the central-suspension term vanishes because
$(\Sigma_1)_*=0$. Therefore $t f_*c_{n*}=0$ on each orbit summand, and summing
over all orbits gives~\eqref{eq:central-killing-composite}. The last assertion
follows from $\rho((e_x)^t)=R_x^t=1$.
\end{proof}

\subsection{Application to Coxeter groups}
\label{subsec:coxeter-application}

Let $(W,S)$ be a Coxeter system of finite rank. The set of all reflections
\[
 Q_W=\{wsw^{-1}\mid w\in W,\ s\in S\}
\]
becomes a quandle under $x\lhd y=yxy$; it is called the \emph{Coxeter
quandle} \cite{AkitaCoxeter}. Since every reflection $y$ is an involution,
$R_y^2=\operatorname{id}$.

The following corollary gives another proof of a consequence of Howlett's computation of the Schur multiplier \cite{HowlettCoxeter} (In cidentally, a quandle-theoretic proof is also given in \cite[Corollary 4.4]{Akita2026}).
\begin{corollary}[The second homology of a Coxeter group]
\label{cor:coxeter-H2}
Let $(W,S)$ be a Coxeter system of finite rank. Then
\[
 2\,H_2^{\gr}(W)=0.
\]
\end{corollary}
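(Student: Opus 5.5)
Let $X=Q_W$ be the Coxeter quandle and set $t=2$; since every reflection is an involution, $R_y^2=\operatorname{id}$ for all $y\in Q_W$. We compare $\As(Q_W)$ with $W$ through the homomorphism $f\colon\As(Q_W)\to W$, $e_x\mapsto x$. It is well defined because $e_xe_y=e_ye_{x\lhd y}$ maps to $xy=y(yxy)$, which holds since $y^2=1$. It is surjective because $S\subset Q_W$ generates $W$. Moreover $f((e_x)^2)=x^2=1$. By Proposition~\ref{prop:central-killing-composite} with $H=W$, we get $2\,f_*c_{2*}=0$ on $H_2^\R(Q_W)$. It therefore suffices to show that $f_*c_{2*}\colon H_2^\R(Q_W)\to H_2^{\gr}(W)$ is surjective, or at least that its image has cokernel killed by $2$.

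For surjectivity we use Hopf's formula. Since $f$ is surjective, the five-term exact sequence for $1\to N\to\As(Q_W)\to W\to1$ reads
\[
H_2^{\gr}(\As(Q_W))\to H_2^{\gr}(W)\to N/[\As(Q_W),N]\to H_1^{\gr}(\As(Q_W))\to H_1^{\gr}(W)\to0 .
\]
The kernel $N$ is the normal closure of the central elements $s_\lambda=(e_x)^2$. Indeed, the Coxeter relations $(st)^{m_{st}}=1$ follow in $\As(Q_W)/\langle e_x^2\rangle$ from the quandle relations: the relation $e_se_{t'}=e_{t'}e_{s\lhd t'}$ along the dihedral orbit yields the braid relation, and together with $e_s^2=1$ this gives the Coxeter presentation. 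Hence $N$ is generated by the central $s_\lambda$, so $N$ is central and $[\As(Q_W),N]=1$. The map $N\to H_1(\As(Q_W))=\ZZ[\OO(Q_W)]$ sends $s_\lambda\mapsto 2u_\lambda$, which is injective on the free abelian group generated by the $s_\lambda$. Since $N$ is abelian and generated by these elements, this map is injective, and exactness then shows that $H_2^{\gr}(\As(Q_W))\to H_2^{\gr}(W)$ is surjective.

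It remains to see that $c_{2*}\colon H_2^\R(Q_W)\to H_2^{\gr}(\As(Q_W))$ is surjective. This is standard: $BX\to K(\As(X),1)$ is an isomorphism on $\pi_1$, so by Hopf's theorem $H_2(BX)\to H_2(\pi_1)$ is surjective, since the cokernel injects into nothing. Combining the two surjections, $f_*c_{2*}$ is onto $H_2^{\gr}(W)$, and therefore $2H_2^{\gr}(W)=0$.

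The main obstacle is the claim that $N$ is central and maps injectively to $H_1$. Centrality follows from Lemma~\ref{lem:central-type}. Injectivity requires that the distinct orbit elements $s_\lambda$ are independent; this follows because the abelianization sends $s_\lambda$ to $2u_\lambda$, and a central subgroup generated by elements with $\ZZ$-independent images is free abelian. The remaining concern is the presentation of $W$. If needed, we would instead avoid it entirely by noting that $W=\As(Q_W)/\langle e_x^2\rangle$, as in Akita's work.
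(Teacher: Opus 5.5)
Your proof is correct and follows the paper's argument. Proposition~\ref{prop:central-killing-composite} gives $2f_*c_{2*}=0$, Hopf gives surjectivity of $c_{2*}$, and the five-term sequence of the central extension, with $e_x^2\mapsto 2u_{[x]}$ injective, gives surjectivity of $H_2^{\gr}(\As(Q_W))\to H_2^{\gr}(W)$. The only difference is that you derive the structure of $\Ker f$ yourself (generated by the central $s_\lambda$, free abelian, injecting into $\As(Q_W)_{\ab}$) from Lemma~\ref{lem:central-type} and the Coxeter relations in $\As(Q_W)/\langle\langle e_x^2\rangle\rangle$, whereas the paper cites Akita for these facts.
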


\begin{proof}
Let $R_W\subset S$ be a complete set of representatives for the
$W$-conjugacy classes of simple reflections, and let
\[
 \phi\colon\As(Q_W)\twoheadrightarrow W,
 \qquad e_x\longmapsto x.
\]
By Proposition~\ref{prop:central-killing-composite},
$2\phi_*c_{2*}=0$, and the Hopf exact sequence shows that $c_{2*}$ is
surjective \cite[Chapter~II, \S5]{Brown}.

By Akita's results, $C_W:=\Ker\phi$ is central, and
\[
 C_W=\bigoplus_{s\in R_W}\langle e_s^2\rangle\cong\ZZ^{R_W},
 \qquad
 \As(Q_W)_{\ab}=\bigoplus_{s\in R_W}\ZZ[e_s].
\]
Moreover, the five-term exact sequence of the central extension
$1\to C_W\to\As(Q_W)\to W\to1$ contains
\begin{equation}\label{fff} 
 H_2^{\gr}(\As(Q_W)) \stackrel{\phi_*}{\longrightarrow} H_2^{\gr}(W)
 \longrightarrow C_W\longrightarrow\As(Q_W)_{\ab}
 \longrightarrow W_{\ab}\longrightarrow0
\end{equation} 
\cite[Proposition~2.4, Lemma~2.5 and Theorem~3.1]{AkitaCoxeter}. The map $C_W\to\As(Q_W)_{\ab}$ sends $e_s^2$ to $2[e_s]$, so it is
multiplication by $2$ on each direct summand and is injective. Thus,
$\phi_*$ in \eqref{fff} 
is surjective, and consequently $2H_2^{\gr}(W)=0$.
\end{proof}

\subsection{The second homology of the associated group after inverting \texorpdfstring{$t$}{t}}
\label{subsec:H2-away-from-type}

Throughout this subsection, let $X$ be a rack with
$\Type(X)=t<\infty$, and set $G=\As(X)$. We retain the notation
$A_X$, $u_\lambda$, $\mathcal T_X$, $\pi_X$, and $\varepsilon_X$ from
Section~\ref{sec:free-part-image}.

For commuting elements $g,h\in G$, write
\begin{equation}
 \langle g,h\rangle_{\mathrm P}
 :=\bigl[\,[g\mid h]-[h\mid g] \,\bigr]
 \in H_2^{\gr}(G)
 \label{eq:Pontryagin-class}
\end{equation}
for their Pontryagin product. It is alternating and satisfies
$\langle g^m,h^n\rangle_{\mathrm P}=mn\langle g,h\rangle_{\mathrm P}$ for
all integers $m,n$. Indeed, it is the image of the standard generator of
$H_2^{\gr}(\ZZ^2)=\Lambda^2\ZZ^2$ under the homomorphism
$\ZZ^2\to G$, $(a,b)\mapsto g^ah^b$, while
$(a,b)\mapsto(ma,nb)$ induces multiplication by $mn$ on
$H_2^{\gr}(\ZZ^2)$. In particular, when the first variable is a fixed central
element $s$, the map $h\mapsto\langle s,h\rangle_{\mathrm P}$ factors through
$G_{\ab}$.

Choose a representative $x_\lambda\in X_\lambda$ for each orbit, and set
$a_\lambda=e_{x_\lambda}$ and $s_\lambda=a_\lambda^t$. By
Lemma~\ref{lem:central-type}, the central element $s_\lambda$ is independent
of the choice of representative. Since the rack differential of
$\mathcal T_X$ is zero,
$H_2^\R(\mathcal T_X)=\ZZ[\OO(X)^2]\cong A_X\otimes A_X$. Define a
$\ZZ$-linear map
\[
\Lambda_X\colon A_X\otimes A_X\longrightarrow H_2^{\gr}(G),
 \qquad
 \Lambda_X(u_\lambda\otimes u_\mu)
 =\langle s_\lambda,a_\mu\rangle_{\mathrm P}.
\]
This is independent of the choice of $x_\mu$, because the second variable
depends only on its class in $G_{\ab}$ and generators belonging to the same
orbit have the same image in $G_{\ab}$.

\begin{lemma}[Orbit factorization in degree two]
\label{lem:degree-two-orbit-factorization}
For the canonical map
\[
 \operatorname{alt}\colon A_X\otimes A_X\longrightarrow\Lambda^2A_X,
 \qquad u_\lambda\otimes u_\mu\longmapsto u_\lambda\wedge u_\mu,
\]
one has
\begin{align}
 t\,c_{2*}&=\Lambda_X(\pi_X)_*
 \colon H_2^\R(X)\longrightarrow H_2^{\gr}(G),\notag\\
 (\varepsilon_X)_*c_{2*}&=\operatorname{alt}(\pi_X)_*
 \colon H_2^\R(X)\longrightarrow\Lambda^2A_X,
 \label{eq:epsilon-c2-alt}\\
 (\varepsilon_X)_*\Lambda_X&=t\,\operatorname{alt}
 \colon A_X\otimes A_X\longrightarrow\Lambda^2A_X.
 \label{eq:epsilon-Lambda-alt}
\end{align}
\end{lemma}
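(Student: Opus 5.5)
The plan is to prove the three identities separately. The second and third follow from naturality; the first is the degree-two case of the edge-replacement identity, made explicit.

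\emph{Identity \eqref{eq:epsilon-c2-alt}.} This is Proposition~\ref{prop:orbit-exterior-formula} with $n=2$: the sign $(-1)^2=1$, and $\operatorname{Alt}_2=\operatorname{alt}$. Nothing further is needed.

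\emph{Identity \eqref{eq:epsilon-Lambda-alt}.} I would use naturality of the Pontryagin product. A homomorphism $f$ between groups sends $\langle g,h\rangle_{\mathrm P}$ to $\langle f(g),f(h)\rangle_{\mathrm P}$, since both are images of the generator of $H_2^{\gr}(\ZZ^2)$. Hence
\[
 (\varepsilon_X)_*\Lambda_X(u_\lambda\otimes u_\mu)=\langle t u_\lambda,u_\mu\rangle_{\mathrm P}
\]
in $H_2^{\gr}(A_X)$, and bilinearity gives $t\langle u_\lambda,u_\mu\rangle_{\mathrm P}$. Under the standard identification $H_2^{\gr}(A_X)\cong\Lambda^2A_X$, the class of $[g\mid h]-[h\mid g]$ is $g\wedge h$. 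This is the same normalization used in Proposition~\ref{prop:orbit-exterior-formula} for $n=2$, where $(-1)^2\sum_\sigma\sgn(\sigma)[\cdots]$ represents $u_{\lambda_1}\wedge u_{\lambda_2}$. So the result is $t\,u_\lambda\wedge u_\mu$, as claimed.

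\emph{Identity $t\,c_{2*}=\Lambda_X(\pi_X)_*$.} Decompose a class $\alpha=\sum_\lambda\alpha_\lambda$ using \eqref{eq:first-orbit-decomposition}. On each summand, pass to homology in Proposition~\ref{prop:edge-replacement} with $n=2$, exactly as in \eqref{eq:orbit-induction-step}:
\[
 t\,c_{2*}(\alpha_\lambda)=-(\Sigma_{s_\lambda})_*c_{1*}(p_*\alpha_\lambda).
\]
Next I would compute $\Sigma_s\kappa_1(y)$ explicitly. We have $\kappa_1(y)=-\varpi\langle1,b\rangle$ with $b=e_y$. The $(1,1)$-shuffle gives
\[
 \langle1,s\rangle\diamond\langle1,b\rangle=\langle1,s,sb\rangle-\langle1,b,bs\rangle,
\]
whose coinvariant class in the inhomogeneous bar complex is $[s\mid b]-[b\mid s]$, using that $s$ is central. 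Hence
\[
 -(\Sigma_{s_\lambda})_*c_{1*}[y]=\langle s_\lambda,e_y\rangle_{\mathrm P}.
\]
Now let $\xi=\sum n_{x,y}(x,y)$ be a $2$-cycle with all first coordinates in $X_\lambda$. Then $p_*[\xi]=\sum n_{x,y}[y]$, and the right-hand side becomes $\sum n_{x,y}\langle s_\lambda,a_{[y]}\rangle_{\mathrm P}$. This uses that $\langle s,h\rangle_{\mathrm P}$ depends only on the image of $h$ in $G_{\ab}$, so $e_y$ may be replaced by the orbit representative. On the other side, $(\pi_X)_*[\xi]=\sum n_{x,y}\,u_\lambda\otimes u_{[y]}$, so $\Lambda_X$ of it agrees with the previous sum. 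Summing over $\lambda$ gives the first identity.

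The main obstacle is the sign bookkeeping. It enters at three points: the shuffle sign convention, the minus sign in $\kappa_1=-[e_x]$, and the minus sign in \eqref{eq:orbit-induction-step}. I would double-check their product by composing both sides with $\varepsilon_X$. By \eqref{eq:epsilon-c2-alt} and \eqref{eq:epsilon-Lambda-alt}, both sides then become $t\,\operatorname{alt}(\pi_X)_*$. This confirms the overall sign is $+$, provided the map $h\mapsto\langle s,h\rangle_{\mathrm P}$ is handled in the orientation fixed above.
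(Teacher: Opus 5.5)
Your proposal is correct and follows the paper's proof. The second identity is the $n=2$ case of the orbit-exterior formula, and the third follows from naturality and bilinearity of the Pontryagin product together with $\varepsilon_X(s_\lambda)=t u_\lambda$. The first is the $n=2$ edge-replacement identity combined with $(\Sigma_s)_*[h]=\langle s,h\rangle_{\mathrm P}$ and $\kappa_1(y)=-[e_y]$; the paper only asserts the former, and your explicit $(1,1)$-shuffle computation verifies it with the right sign.

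Your closing check by composing with $\varepsilon_X$ only corroborates the sign, since $(\varepsilon_X)_*$ kills torsion and the check is vacuous for connected racks. Nothing in your argument depends on it.
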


\begin{proof}
For a central element $s$, one has
$(\Sigma_s)_*[h]=\langle s,h\rangle_{\mathrm P}$, while
$\kappa_1(y)=-[e_y]$. Apply Proposition~\ref{prop:edge-replacement} to each
first-orbit component of a $2$-cycle and move the central-suspension term to
the other side. This gives
$t\,c_{2*}=\Lambda_X(\pi_X)_*$. Equation~\eqref{eq:epsilon-c2-alt} is the case
$n=2$ of Proposition~\ref{prop:orbit-exterior-formula}. Finally,
$\varepsilon_X(s_\lambda)=t u_\lambda$, so naturality and bilinearity of the
Pontryagin product give
\[
 (\varepsilon_X)_*\Lambda_X(u_\lambda\otimes u_\mu)
 =\langle t u_\lambda,u_\mu\rangle_{\mathrm P}
 =t\,u_\lambda\wedge u_\mu,
\]
which is~\eqref{eq:epsilon-Lambda-alt}.
\end{proof}

\begin{theorem}
\label{thm:H2-away-from-type}
The kernel of the homomorphism induced by the orbit-degree map,
\[
 (\varepsilon_X)_*\colon H_2^{\gr}(G)
 \longrightarrow H_2^{\gr}(A_X)\cong\Lambda^2A_X,
\]
is annihilated by $t^2$, and its cokernel is annihilated by $t$.
Consequently, $\varepsilon_X$ induces a canonical isomorphism
\begin{equation}
 H_2^{\gr}\bigl(\As(X);\ZZ[1/t]\bigr)
 \xrightarrow{\ \cong\ }
 \Lambda^2_{\ZZ[1/t]}\bigl(\ZZ[1/t][\OO(X)]\bigr).
 \label{eq:H2-away-from-type}
\end{equation}
Moreover,
\[
 \Ker\bigl((\varepsilon_X)_*\bigr)=\Tor H_2^{\gr}(\As(X)),
 \qquad
 t^2\Tor H_2^{\gr}(\As(X))=0.
\]
\end{theorem}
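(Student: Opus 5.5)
We use the three identities of Lemma~\ref{lem:degree-two-orbit-factorization} together with surjectivity of $c_{2*}$ onto $H_2^{\gr}(G)$. Since $BX$ is a connected CW complex with $\pi_1(BX)\cong G$, Hopf's exact sequence $\pi_2(BX)\to H_2(BX)\to H_2^{\gr}(\pi_1)\to 0$ shows that $c_{2*}\colon H_2^\R(X)\to H_2^{\gr}(G)$ is surjective.

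\emph{Cokernel.} Let $\omega\in\Lambda^2A_X$, and lift it to $\tilde\omega\in A_X\otimes A_X$ with $\operatorname{alt}\tilde\omega=\omega$. By \eqref{eq:epsilon-Lambda-alt},
\[
 (\varepsilon_X)_*\Lambda_X(\tilde\omega)=t\,\omega .
\]
Hence $t\Lambda^2A_X\subset\operatorname{Im}(\varepsilon_X)_*$, so the cokernel is annihilated by $t$.

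\emph{Kernel.} Let $\beta\in\Ker(\varepsilon_X)_*$ and write $\beta=c_{2*}\alpha$. By \eqref{eq:epsilon-c2-alt}, $\operatorname{alt}(\pi_X)_*\alpha=0$. Since $\Lambda^2$ of a free module is the quotient of $A_X\otimes A_X$ by the span of the elements $u\otimes u$ and $u\otimes v+v\otimes u$, the element $(\pi_X)_*\alpha$ is a $\ZZ$-combination of $u_\lambda\otimes u_\lambda$ and $u_\lambda\otimes u_\mu+u_\mu\otimes u_\lambda$. Then
\[
 t\beta=t\,c_{2*}\alpha=\Lambda_X(\pi_X)_*\alpha
\]
is a combination of the classes $\langle s_\lambda,a_\lambda\rangle_{\mathrm P}$ and $\langle s_\lambda,a_\mu\rangle_{\mathrm P}+\langle s_\mu,a_\lambda\rangle_{\mathrm P}$. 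Multiply by $t$ and use $\langle g^m,h^n\rangle_{\mathrm P}=mn\langle g,h\rangle_{\mathrm P}$ together with $s_\lambda=a_\lambda^t$. The first type becomes $t\langle s_\lambda,a_\lambda\rangle_{\mathrm P}=\langle a_\lambda^t,a_\lambda^t\rangle_{\mathrm P}=0$ by the alternating property. The second becomes
\[
 t\langle s_\lambda,a_\mu\rangle_{\mathrm P}+t\langle s_\mu,a_\lambda\rangle_{\mathrm P}
 =\langle s_\lambda,s_\mu\rangle_{\mathrm P}+\langle s_\mu,s_\lambda\rangle_{\mathrm P}=0 .
\]
Each identity $t\langle s,a\rangle_{\mathrm P}=\langle s,a^t\rangle_{\mathrm P}$ is legitimate because $s$ is central, so $s$ commutes with $a$. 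Therefore $t^2\beta=0$. This symmetric-tensor bookkeeping is the main point to check; the rest is formal.

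\emph{Consequences.} Tensoring with $\ZZ[1/t]$ is exact and kills groups annihilated by a power of $t$. Since $H_2^{\gr}(G;\ZZ[1/t])=H_2^{\gr}(G)\otimes\ZZ[1/t]$ by the universal coefficient theorem (the $\Tor$ term vanishes because $\ZZ[1/t]$ is flat), we obtain \eqref{eq:H2-away-from-type}. The kernel is torsion, and $\Lambda^2A_X$ is free, so every torsion element maps to $0$. Hence $\Ker(\varepsilon_X)_*=\Tor H_2^{\gr}(\As(X))$, which is killed by $t^2$.
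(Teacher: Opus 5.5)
Your proposal is correct and follows the paper's proof essentially step for step. It uses surjectivity of $c_{2*}$ via Hopf's sequence, bounds the cokernel by \eqref{eq:epsilon-Lambda-alt}, and bounds the kernel by $t^2$ by evaluating $t\Lambda_X$ on the generators $u_\lambda\otimes u_\lambda$ and $u_\lambda\otimes u_\mu+u_\mu\otimes u_\lambda$ of $\Ker(\operatorname{alt})$; flatness of $\ZZ[1/t]$ and torsion-freeness of $\Lambda^2A_X$ then give the remaining assertions.
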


\begin{proof}
Since the classifying map induces an isomorphism on fundamental groups, the
Hopf exact sequence shows that
$c_{2*}\colon H_2^\R(X)\twoheadrightarrow H_2^{\gr}(G)$ is surjective
\cite[Chapter~II, \S5]{Brown}. Let
$y\in\Ker((\varepsilon_X)_*)$, write $y=c_{2*}(\alpha)$, and put
$M=(\pi_X)_*(\alpha)$. By
Lemma~\ref{lem:degree-two-orbit-factorization},
\[
 \operatorname{alt}(M)=0,
 \qquad
 ty=\Lambda_X(M).
\]
The kernel of $\operatorname{alt}$ is generated by
$u_\lambda\otimes u_\lambda$ and
$u_\lambda\otimes u_\mu+u_\mu\otimes u_\lambda$. By bilinearity and
alternation,
\[
 t\Lambda_X(u_\lambda\otimes u_\lambda)
 =\langle s_\lambda,s_\lambda\rangle_{\mathrm P}=0,
\]
and
\[
 t\Lambda_X(u_\lambda\otimes u_\mu+u_\mu\otimes u_\lambda)
 =\langle s_\lambda,s_\mu\rangle_{\mathrm P}
 +\langle s_\mu,s_\lambda\rangle_{\mathrm P}=0.
\]
Hence $t^2y=0$, so the kernel is annihilated by $t^2$.

Equation~\eqref{eq:epsilon-Lambda-alt} also gives
$t\Lambda^2A_X\subset\operatorname{Im}((\varepsilon_X)_*)$, so the cokernel
is annihilated by $t$. Thus $(\varepsilon_X)_*$ becomes an isomorphism after
inverting $t$, and the flatness of $\ZZ[1/t]$ yields
\eqref{eq:H2-away-from-type}. Finally, $\Lambda^2A_X$ is torsion-free and the kernel is $t^2$-torsion,
so the kernel is exactly $\Tor H_2^{\gr}(G)$.
\end{proof}

\section{Coefficient-contracted symplectic quandles}

We compute the second homotopy group of the rack space of a finite, possibly
nonconnected, symplectic quandle. We use Nosaka's calculations of associated
groups and low-dimensional group homology over finite fields
\cite[\S4.2]{NosakaAdjoint}; the degree-two quandle homology in the
nonconnected case and the vanishing of the third classifying homomorphism are
established here.

Let $q=\mathfrak p^d>10$, let $g\ge2$, and let
$V=\mathbb F_q^{2g}$ be equipped with a nondegenerate alternating form
$\omega$. Put
\[
 \Omega_q=\mathbb F_q^\times/
 \langle-1,(\mathbb F_q^\times)^2\rangle.
\]
Choose a representative $r_\lambda\in\mathbb F_q^\times$ for each
$\lambda\in\Omega_q$, let $V_\lambda^\times$ be a copy of
$V^\times=V\setminus\{0\}$, and define on
$X=\coprod_{\lambda\in\Omega_q}V_\lambda^\times$ the operation
\begin{equation}
 (x,\lambda)\lhd(y,\mu)
 =\bigl(x+r_\mu\omega(x,y)y,\lambda\bigr).
\label{eq:symp-operation}
\end{equation}
This is a coefficient-contracted variant of the nonzero symplectic quandle
\cite{NavasNelson}. Its isomorphism type is independent of the representatives.
Indeed, if $r'_\lambda=b_\lambda^2r_\lambda$, then
$(x,\lambda)\mapsto(b_\lambda^{-1}x,\lambda)$ is an isomorphism. If
$q\equiv3\pmod4$, then $\Omega_q$ is a singleton; after absorbing a square
factor, the remaining change is $r\mapsto-r$. Relative to a symplectic basis,
the map fixing one Lagrangian factor and negating the other has multiplier
$-1$, and hence gives the remaining isomorphism.

Put $G=\As(X)$, $\Gamma=\operatorname{Sp}_{2g}(\mathbb F_q)$, and
$\mathcal A=\ZZ[\Omega_q]$. Let
$\varepsilon_{\OO}\colon G\to\mathcal A$ be the orbit-degree map,
$\rho\colon G\to\Gamma$ the action homomorphism, and
$\varepsilon=\operatorname{aug}\circ\varepsilon_{\OO}$ the total degree,
where $\operatorname{aug}([\lambda])=1$.

\begin{proposition}
\label{prop:symp-associated-group}
The quandle $X$ has type $\mathfrak p$, and
\[
 \Inn(X)\cong\Gamma,
 \qquad
 \OO(X)\cong\Omega_q.
\]
Consequently, if $m=|\OO(X)|$, then
\begin{equation}
 m=
 \begin{cases}
 1,&q\text{ is even or }q\equiv3\pmod4,\\
 2,&q\equiv1\pmod4.
 \end{cases}
\label{eq:symp-orbit-number}
\end{equation}
Moreover, the canonical homomorphism
\begin{equation}
 (\varepsilon_{\OO},\rho)\colon
 G\xrightarrow{\cong}\mathcal A\times\Gamma
\label{eq:symp-associated-group}
\end{equation}
is an isomorphism, and
\begin{equation}
 H_2^{\gr}(G)\cong\Lambda^2\mathcal A,
 \qquad
 H_3^{\gr}(G)\cong\ZZ/(q^2-1).
\label{eq:symp-group-homology}
\end{equation}
\end{proposition}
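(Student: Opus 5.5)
We verify the elementary quandle-theoretic claims directly and obtain the group-theoretic claims by reduction to Nosaka's computations in \cite[\S4.2]{NosakaAdjoint}.

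\emph{Type.} In \eqref{eq:symp-operation}, $R_{(y,\mu)}$ acts on each $V_\lambda^\times$ as the symplectic transvection $x\mapsto x+r_\mu\omega(x,y)y$. Iterating gives $R_{(y,\mu)}^k(x)=x+kr_\mu\omega(x,y)y$, because $\omega(y,y)=0$. So the order of $R_{(y,\mu)}$ is exactly $\mathfrak p$ whenever $\omega(\cdot,y)\neq0$, which always holds. Hence the type is $\mathfrak p$.

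\emph{Inner group and orbits.} Transvections generate $\Gamma$, and $\Gamma$ acts faithfully on $V^\times$, hence faithfully on each $V_\lambda^\times$. The only point needing care is that all transvections $T_{y,c}$ with $c\in r_\mu(\mathbb F_q^\times)^2$ occur, since $T_{by,c}=T_{y,b^2c}$; these already generate $\Gamma$. Thus $\Inn(X)\cong\Gamma$. The orbits are the $V_\lambda^\times$ because $\Gamma$ is transitive on $V^\times$ and the label $\lambda$ is preserved. This gives $\OO(X)\cong\Omega_q$, and \eqref{eq:symp-orbit-number} is the elementary count of $\mathbb F_q^\times/\langle-1,(\mathbb F_q^\times)^2\rangle$: it is trivial for $q$ even or $q\equiv3\pmod 4$ (then $-1$ is a nonsquare), and has order $2$ for $q\equiv1\pmod4$.

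\emph{Associated group.} The map $(\varepsilon_\OO,\rho)$ is surjective. The image contains $(u_\lambda,T)$ for suitable transvections $T$, so it contains $[\cdot,\cdot]$ of these, and $\Gamma$ is perfect for $q>3$; hence the image contains $1\times\Gamma$, and then also $\mathcal A\times1$. For injectivity we use the central extension $1\to\Ker\rho\to G\to\Gamma\to1$. Here $\Ker\rho$ is central, as noted in the proof of Theorem~\ref{thm:finite-rack-free-image}, and it is finitely generated with $\Ker\rho\cap[G,G]$ finite. Since $\Gamma$ is perfect, $[G,G]$ maps onto $\Gamma$ and is a perfect central extension of $\Gamma$. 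It is therefore a quotient of the universal central extension, whose kernel is the Schur multiplier $H_2(\Gamma)$. This multiplier is $0$ for $q>10$, since the exceptional multipliers of $\operatorname{Sp}_{2g}(\mathbb F_q)$ occur only for $q\le9$; this is where the hypothesis $q>10$ enters. So $[G,G]\cong\Gamma$. Combined with $G_{\ab}\cong\mathcal A$, we obtain $G\cong\mathcal A\times[G,G]$, with $\mathcal A$ central, and this is exactly the inverse of \eqref{eq:symp-associated-group}.

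\emph{Homology.} Apply the Künneth formula to $\mathcal A\times\Gamma$ with $\mathcal A\cong\ZZ^m$ free abelian. We use $H_1(\Gamma)=0$, $H_2(\Gamma)=0$, and $H_3(\Gamma)\cong\ZZ/(q^2-1)$ from \cite[\S4.2]{NosakaAdjoint}. This gives $H_2(G)\cong\Lambda^2\mathcal A$ and $H_3(G)\cong\Lambda^3\mathcal A\oplus(\mathcal A\otimes H_2\Gamma)\oplus(\Lambda^2\mathcal A\otimes H_1\Gamma)\oplus H_3\Gamma$, with no Tor terms because $\mathcal A$ is free and the relevant lower groups vanish. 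Since $m\le2$, we have $\Lambda^3\mathcal A=0$, and the middle terms vanish as well, leaving $H_3(G)\cong\ZZ/(q^2-1)$.

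\emph{Main obstacle.} The hardest step is the injectivity of \eqref{eq:symp-associated-group}, namely controlling $\Ker\rho$ through the Schur multiplier of $\Gamma$ for $q>10$. The value $H_3(\Gamma)\cong\ZZ/(q^2-1)$ is also imported rather than proved here.
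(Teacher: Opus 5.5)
Your proposal is correct and follows essentially the same route as the paper. You verify the type, $\Inn(X)\cong\Gamma$ via transvections, and the orbits directly; you then use $H_2^{\gr}(\Gamma)=0$ to show that $\Ker\varepsilon_{\OO}=[G,G]$ maps isomorphically onto $\Gamma$ under $\rho$, and finish with the K\"unneth formula for $m\le2$. The only differences are cosmetic. You derive that $[G,G]$ is a perfect central extension of $\Gamma$ from the perfectness of $\Gamma$ and then invoke the universal central extension. The paper instead cites \cite[Proposition~3.3]{NosakaAdjoint} for that fact and kills the kernel with the five-term exact sequence.
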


\begin{proof}
Write $T_{a,y}(x)=x+a\omega(x,y)y$. A direct calculation gives
$T_{a,y}\in\Gamma$, $T_{a,y}(y)=y$, $T_{a,y}^{-1}=T_{-a,y}$, and
$gT_{a,y}g^{-1}=T_{a,g(y)}$ for $g\in\Gamma$; these identities give the
quandle axioms. Moreover, $T_{a,y}^n=T_{na,y}$, and nondegeneracy of
$\omega$ shows that every right translation has order $\mathfrak p$.

The diagonal action $g(x,\lambda)=(gx,\lambda)$ of $\Gamma$ on $X$ is
faithful and is by quandle automorphisms. For every
$c\in\mathbb F_q^\times$, there are $\mu\in\Omega_q$,
$a\in\mathbb F_q^\times$, and $\epsilon\in\{\pm1\}$ such that
$c=\epsilon r_\mu a^2$; hence $T_{c,y}$ is either $T_{r_\mu,ay}$ or its
inverse. Thus the right translations generate all symplectic transvections
and therefore $\Gamma$ \cite[Lemma~4.3]{NosakaAdjoint}. It follows that
$\Inn(X)\cong\Gamma$. Since the operation preserves the label of the left
entry and $\Gamma$ acts transitively on $V^\times$, the copies
$V_\lambda^\times$ are precisely the orbits, which also gives
\eqref{eq:symp-orbit-number}.

For $q>10$, one has
$H_1^{\gr}(\Gamma)=H_2^{\gr}(\Gamma)=0$ and
$H_3^{\gr}(\Gamma)\cong\ZZ/(q^2-1)$
\cite[Proposition~4.6 and its proof]{NosakaAdjoint}; see also
\cite{FiedorowiczPriddy,Friedlander}. By
\cite[Proposition~3.3]{NosakaAdjoint},
$K=\Ker\varepsilon_{\OO}$ is a perfect central extension of $\Gamma$.
Writing $C=\Ker(K\to\Gamma)$, the five-term exact sequence contains
$H_2^{\gr}(\Gamma)\to C\to H_1^{\gr}(K)$; both outer groups vanish, so
$C=0$. Thus $\rho|_K\colon K\to\Gamma$ is an isomorphism. Consequently,
$(\varepsilon_{\OO},\rho)$ is injective; it is surjective because one can
first prescribe the orbit degree and then correct the $\Gamma$-component by
an element of $K$. This proves \eqref{eq:symp-associated-group}, and
\eqref{eq:symp-group-homology} follows from the K\"unneth formula and
$m\le2$.
\end{proof}

Since the rack space $BX$ is simple
\cite[Proposition~5.2]{FRS}, its Hopf--Whitehead exact sequence takes the form
\cite[\S6.1]{NosakaHomotopical}
\begin{equation}
 H_3^\R(X)\xrightarrow{c_{3*}}H_3^{\gr}(G)
 \longrightarrow\pi_2(BX)
 \longrightarrow H_2^\R(X)\xrightarrow{c_{2*}}H_2^{\gr}(G)
 \longrightarrow0.
\label{eq:symp-whitehead}
\end{equation}

\begin{proposition}
\label{prop:symp-H2-kernels}
Let $\mathcal A_0=\Ker(\operatorname{aug}\colon\mathcal A\to\ZZ)$. Then
\begin{equation}
 H_2^\Q(X)
 \cong\bigoplus_{\lambda\in\Omega_q}\mathcal A_0
 \cong\ZZ^{m(m-1)}.
\label{eq:symp-H2Q}
\end{equation}
Moreover, noncanonically,
\[
 \Ker\bigl(c_{2*}\colon H_2^\R(X)\longrightarrow H_2^{\gr}(G)\bigr)
 \cong\ZZ^{m(m+1)/2}.
\]
\end{proposition}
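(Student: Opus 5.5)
The plan is to compute $H_2^\R(X)$ through the first-orbit decomposition~\eqref{eq:first-orbit-decomposition}, pass to $H_2^\Q(X)$ by removing the degenerate part, and obtain the kernel statement by a rank count using the surjectivity of $c_{2*}$ and~\eqref{eq:symp-group-homology}.

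\emph{Step 1: orbit summands as stabilizer homology.} Fix a representative $x_\lambda=(v,\lambda)$ of each orbit. I will invoke the standard identification, in low degrees, of the summand $H_2(C_*^{\R,\lambda}(X))$ with $H_1^{\gr}(\Stab_G(x_\lambda))$. This comes from viewing $C_*^{\R,\lambda}$ as $\ZZ[X_\lambda]$ tensored with a shifted rack complex, i.e.\ the cellular chains of the part of $BX$ coming from cells whose first coordinate lies in $X_\lambda$. Concretely, a $2$-cycle $\sum n_i(x_\lambda\cdot g_i,y_i)$ is sent to the class in $\Stab_G(x_\lambda)_{\ab}$ of a product of conjugates $g_i e_{y_i}g_i^{-1}$ that fix $x_\lambda$.

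\emph{Step 2: the stabilizer and its abelianization.} By~\eqref{eq:symp-associated-group}, $\Stab_G(x_\lambda)\cong\mathcal A\times\Stab_\Gamma(v)$. Here $\Stab_\Gamma(v)$ is the stabilizer of a nonzero vector in $\operatorname{Sp}_{2g}(\mathbb F_q)$, which is a Heisenberg group extended by $\operatorname{Sp}_{2g-2}(\mathbb F_q)$. For $q>10$ and $g\ge2$, the factor $\operatorname{Sp}_{2g-2}(\mathbb F_q)$ is perfect, and the Heisenberg group has trivial coinvariants. So I expect $H_1^{\gr}(\Stab_\Gamma(v))=0$, which gives $H_2(C_*^{\R,\lambda}(X))\cong\mathcal A\cong\ZZ^m$ and hence $H_2^\R(X)\cong\ZZ^{m^2}$. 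This agrees rationally with Theorem~\ref{thm:finite-rack-free-image}. This is the step I expect to be the main obstacle: one must check the vanishing of the abelianization carefully, including the case $g=2$, where $\operatorname{Sp}_2=\operatorname{SL}_2$, and the case of even $q$. One must also verify that the orbit-degree component of the identification sends the class of $(x_\lambda,y)$ with $y\in X_\mu$ to $u_\mu$.

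\emph{Step 3: quandle homology.} In degree $2$, the degenerate complex gives a splitting $H_2^\R(X)\cong H_2^\Q(X)\oplus H_1^\R(X)$, where the second factor $\ZZ[\OO(X)]$ is spanned by the classes of $(x_\lambda,x_\lambda)$ (Litherland--Nelson). Under Step 1, the class of $(x_\lambda,x_\lambda)$ lies in the $\lambda$-summand and corresponds to $e_{x_\lambda}$, whose $\mathcal A$-component is $u_\lambda$. Hence the $\lambda$-summand of $H_2^\Q(X)$ is $\mathcal A/\ZZ u_\lambda$. This is free of rank $m-1$ and isomorphic to $\mathcal A_0$, via $u_\mu\mapsto u_\mu-u_\lambda$. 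Summing over $\lambda$ gives~\eqref{eq:symp-H2Q}.

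\emph{Step 4: the kernel of $c_{2*}$.} The map $c_{2*}\colon\ZZ^{m^2}\to H_2^{\gr}(G)\cong\Lambda^2\mathcal A\cong\ZZ^{m(m-1)/2}$ is surjective by~\eqref{eq:symp-whitehead}. Since the target is free, the sequence splits, and the kernel is free of rank $m^2-m(m-1)/2=m(m+1)/2$.
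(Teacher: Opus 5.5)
Your proposal is correct, but it reaches \eqref{eq:symp-H2Q} by a different route from the paper.

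\textbf{The paper's route.} It applies Eisermann's Hurewicz theorem for nonconnected quandles, which gives $H_2^\Q(X)\cong\bigoplus_\lambda(\Stab_G(a_\lambda)\cap\Ker\varepsilon)_{\ab}$ directly. Under $G\cong\mathcal A\times\Gamma$ this is $\bigoplus_\lambda(\mathcal A_0\times P)_{\ab}=\bigoplus_\lambda\mathcal A_0$, using $P_{\ab}=0$ from Nosaka's Lemma~4.8. Only afterwards does it recover $H_2^\R(X)\cong\ZZ^{m^2}$ from the Litherland--Nelson splitting.

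\textbf{Your route.} You work at the rack level. The first-orbit summand $H_2(C_*^{\R,\lambda}(X))$ is $H_1$ of the covering, with fibre $X_\lambda$, of the presentation $2$-complex of $G$. Its vertices, edges and squares $X_\lambda$, $X_\lambda\times X$, $X_\lambda\times X^2$ match $C_1^{\R,\lambda}$, $C_2^{\R,\lambda}$, $C_3^{\R,\lambda}$. Hence this summand is $\Stab_G(x_\lambda)_{\ab}\cong\mathcal A$. You then quotient by the degenerate loop $(x_\lambda,x_\lambda)\leftrightarrow e_{x_\lambda}\mapsto u_\lambda$.

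The two answers agree for a simple reason. Because $e_{x_\lambda}$ is central in $\Stab_G(x_\lambda)$ and $\varepsilon(e_{x_\lambda})=1$, one has $\Stab_G(x_\lambda)=\langle e_{x_\lambda}\rangle\times(\Stab_G(x_\lambda)\cap\Ker\varepsilon)$.

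\textbf{Comparison.} Your argument is more elementary and self-contained. It avoids Eisermann's nonconnected formalism, which is exactly where the earlier vanishing claim went wrong (cf.\ Remark~\ref{rem:comparison-nosaka-H2}). It also makes the factor $\mathcal A_0$ transparent as $\mathcal A/\ZZ u_\lambda$, and it yields $H_2^\R(X)\cong\ZZ^{m^2}$ directly. In Step~3 you do not even need the splitting. Since $C_1^\D=0$, the map $H_2^\R(X)\to H_2^\Q(X)$ is surjective, and its kernel is spanned by the classes of $(x_\lambda,x_\lambda)$. The paper's route is shorter, given the citations. Your Step~4 coincides with the paper's rank count.

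\textbf{Two minor corrections.}
\begin{enumerate}
\item Your explicit description of the Step~1 map via conjugates $g_ie_{y_i}g_i^{-1}$ is not right for a right action. You should use closed edge-paths $g_ie_{y_i}h_i^{-1}$ with $x_\lambda h_i=x_\lambda g_ie_{y_i}$. The identification itself is standard and unaffected.
\item In characteristic $2$, the unipotent radical of $P$ is elementary abelian rather than a Heisenberg group. Its $\operatorname{Sp}_{2g-2}(\mathbb F_q)$-coinvariants still vanish, so $P_{\ab}=0$ holds, as in Nosaka's Lemma~4.8.
\end{enumerate}
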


\begin{proof}
Let $P<\Gamma$ be the stabilizer of a nonzero vector. By
\cite[Lemma~4.8]{NosakaAdjoint}, $P_{\ab}=0$. For
$a_\lambda\in V_\lambda^\times$, the isomorphism
\eqref{eq:symp-associated-group} gives
$\Stab_G(a_\lambda)\cap\Ker\varepsilon\cong\mathcal A_0\times P$.
Eisermann's Hurewicz isomorphism for nonconnected quandles therefore yields
\eqref{eq:symp-H2Q}
\cite[Definition~7.13 and Theorem~9.9]{Eisermann}.

The Litherland--Nelson splitting gives
$H_2^\R(X)\cong H_2^\Q(X)\oplus\ZZ[\OO(X)]\cong\ZZ^{m^2}$
\cite[Theorem~2.2]{LitherlandNelson}. By~\eqref{eq:symp-whitehead}, $c_{2*}$
is surjective, while
$H_2^{\gr}(G)\cong\ZZ^{m(m-1)/2}$. Hence its kernel is free of rank
$m^2-m(m-1)/2=m(m+1)/2$.
\end{proof}

\begin{remark}
\label{rem:comparison-nosaka-H2}
If $q\equiv1\pmod4$, then
$\Omega_q=\mathbb F_q^\times/(\mathbb F_q^\times)^2$, so $X$ is the
square-class model in \cite[\S4.2]{NosakaAdjoint}. In particular, $m=2$ and
$H_2^\Q(X)\cong\ZZ^2$. This corrects the vanishing assertion in
\cite[Proposition~4.7]{NosakaAdjoint}: in the nonconnected case,
Eisermann's formula uses the kernel of the total degree
$\varepsilon\colon\As(X)\to\ZZ$, so the factor $\mathcal A_0$ survives.
\end{remark}

\begin{theorem}
\label{thm:symp-pi2}
The homomorphism
$c_{3*}\colon H_3^\R(X)\to H_3^{\gr}(G)$ is zero. Moreover, there is a
noncanonical isomorphism
\[
 \pi_2(BX)\cong
 \ZZ^{m(m+1)/2}\oplus\ZZ/(q^2-1).
\]
\end{theorem}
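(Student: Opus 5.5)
The plan is to prove the vanishing of $c_{3*}$ by comparing the type $\mathfrak p$ with the order of $H_3^{\gr}(G)$. The computation of $\pi_2(BX)$ then follows from the Hopf--Whitehead sequence~\eqref{eq:symp-whitehead} and Proposition~\ref{prop:symp-H2-kernels}.

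\emph{Step 1: $\rho_*$ is an isomorphism in degree $3$.} I first check that the action homomorphism induces an isomorphism
\[
\rho_*\colon H_3^{\gr}(G)\to H_3^{\gr}(\Gamma).
\]
By~\eqref{eq:symp-associated-group}, $G\cong\mathcal A\times\Gamma$, and $\rho$ is the projection onto the second factor. I apply the Künneth formula with $\mathcal A\cong\ZZ^m$ free abelian of rank $m\le2$. The homology of $\mathcal A$ is torsion-free, so no $\Tor$ terms occur. For $q>10$ we have $H_1^{\gr}(\Gamma)=H_2^{\gr}(\Gamma)=0$. The relevant summands are therefore
\[
H_1(\mathcal A)\otimes H_2(\Gamma)=0,\qquad H_2(\mathcal A)\otimes H_1(\Gamma)=0,\qquad H_3(\mathcal A)\otimes H_0(\Gamma)=\Lambda^3\mathcal A=0,
\]
the last because $m\le2$. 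Only $H_0(\mathcal A)\otimes H_3(\Gamma)$ survives, and it maps isomorphically under $\rho_*$. This is essentially the computation behind~\eqref{eq:symp-group-homology}.

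\emph{Step 2: $c_{3*}=0$.} By Proposition~\ref{prop:symp-associated-group}, $X$ has type $\mathfrak p$. Moreover, $\rho((e_x)^{\mathfrak p})=R_x^{\mathfrak p}=1$ for every $x\in X$. Proposition~\ref{prop:central-killing-composite} (or its stated special case for $\rho$) then gives $\mathfrak p\,\rho_*c_{3*}=0$ on all of $H_3^\R(X)$, including the free part. This matters because Theorem~\ref{thm:main}(2) alone would only control torsion classes. Combining with Step~1, $\mathfrak p\,c_{3*}=0$. The target $H_3^{\gr}(G)\cong\ZZ/(q^2-1)$ has order prime to $\mathfrak p$, since $q^2-1\equiv-1\pmod{\mathfrak p}$. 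Multiplication by $\mathfrak p$ is therefore injective on it, and hence $c_{3*}=0$. The main point to verify carefully is Step~1, namely that no $\Lambda^3\mathcal A$ or mixed Künneth term survives; the bound $m\le2$ from~\eqref{eq:symp-orbit-number} is exactly what kills $\Lambda^3\mathcal A$.

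\emph{Step 3: $\pi_2(BX)$.} Since $c_{3*}=0$, the sequence~\eqref{eq:symp-whitehead} yields a short exact sequence
\[
0\to\ZZ/(q^2-1)\to\pi_2(BX)\to\Ker(c_{2*})\to0.
\]
By Proposition~\ref{prop:symp-H2-kernels}, $\Ker(c_{2*})\cong\ZZ^{m(m+1)/2}$. Since this quotient is free abelian, the sequence splits, though noncanonically. This gives
\[
\pi_2(BX)\cong\ZZ^{m(m+1)/2}\oplus\ZZ/(q^2-1).
\]
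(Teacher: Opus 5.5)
Your proposal is correct and follows essentially the same route as the paper. The Künneth formula with $m\le2$ and $H_1^{\gr}(\Gamma)=H_2^{\gr}(\Gamma)=0$ makes $\rho_*$ an isomorphism in degree $3$. Proposition~\ref{prop:central-killing-composite} then gives $\mathfrak p\,c_{3*}=0$ on all of $H_3^\R(X)$, which coprimality with $q^2-1$ turns into $c_{3*}=0$. Finally, the Hopf--Whitehead sequence splits because $\Ker(c_{2*})$ is free.
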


\begin{proof}
Under~\eqref{eq:symp-associated-group}, the K\"unneth formula, $m\le2$, and
$H_1^{\gr}(\Gamma)=H_2^{\gr}(\Gamma)=0$ show that
$\rho_*\colon H_3^{\gr}(G)\to H_3^{\gr}(\Gamma)$ is an isomorphism. For
$x\in X$, one has
$\rho((e_x)^{\mathfrak p})=R_x^{\mathfrak p}=1$; hence
Proposition~\ref{prop:central-killing-composite} gives
$\mathfrak p\,\rho_*c_{3*}=0$. Thus
$\mathfrak p\,c_{3*}=0$. Since
$H_3^{\gr}(G)\cong\ZZ/(q^2-1)$ and
$\gcd(\mathfrak p,q^2-1)=1$, it follows that $c_{3*}=0$.

Sequence~\eqref{eq:symp-whitehead} now reduces to
\[
 0\longrightarrow H_3^{\gr}(G)
 \longrightarrow\pi_2(BX)
 \longrightarrow\Ker(c_{2*})
 \longrightarrow0.
\]
The right-hand term is free by Proposition~\ref{prop:symp-H2-kernels}; hence
the sequence splits noncanonically, and the stated formula follows from
\eqref{eq:symp-group-homology}.
\end{proof}

\begin{remark}
\label{rem:symp-pi2-nosaka}
For the standard connected one-copy quandle
$X_0=V^\times$, with operation $x\lhd y=x+\omega(x,y)y$, Nosaka proved
\[
 \pi_2(BX_0)\cong\ZZ\oplus\ZZ/(q^2-1)
\]
for $g\ge2$ and odd $q\notin\{3,5,7,9,27\}$
\cite[Equation~(18) and Theorem~7.4(I)]{NosakaHomotopical}.
When $q\equiv3\pmod4$ in this range, the present $X$ is isomorphic to
$X_0$, so Theorem~\ref{thm:symp-pi2} recovers his calculation. It also covers
the one-copy cases with even $q>10$ and with $q=27$. If
$q\equiv1\pmod4$, by contrast, the present $X$ is a different, two-orbit
quandle; in this case the theorem gives
\[
 \pi_2(BX)\cong\ZZ^3\oplus\ZZ/(q^2-1),
\]
and the two additional free summands reflect the nonconnected degree-two
homology in Proposition~\ref{prop:symp-H2-kernels}.
\end{remark}

\section{The second homology of the associated group of an Alexander quandle for which \texorpdfstring{$\mathrm{id}_M-T$}{idM-T} is invertible}

In this section, we compute the second homology of the associated group of a
connected Alexander quandle for which $\mathrm{id}_M-T$ is invertible. We
decompose Clauwens's presentation into a central extension and a semidirect
product, and combine two Lyndon--Hochschild--Serre spectral sequences to
derive the final formula.

Let $M$ be an abelian group and $T\in\operatorname{Aut}(M)$. Denote by
$X=Q_{M,T}$ the Alexander quandle with operation
\[
 x\lhd y=Tx+(\mathrm{id}_M-T)y.
\]
Throughout this section, assume that $\mathrm{id}_M-T$ is invertible, and put
\[
 A:=\Lambda^2M,
 \qquad
 L:=\Lambda^2T,
 \qquad
 f:=\mathrm{id}_A-L.
\]
Here, $H_2^\Q(X)\cong A/fA$ is shown by \cite[Corollary~4.3]{BIMNP}. We now compute
$H_2^{\gr}(\As(X))$.

By Clauwens's presentation \cite[Theorem~1]{ClauwensAlexander},
$G=\As(X)$ is isomorphic to
\[
        \ZZ\times M\times S,
        \qquad
        S=\Coker(\mathrm{id}_{M\otimes M}-\tau),
        \quad \tau(x\otimes y)=Ty\otimes x,
\]
where $\overline z$ denotes the class of $z\in M\otimes M$ in $S$, and the
multiplication is
\[
(k,x,\alpha)(m,y,\beta)
 =
 \bigl(k+m,T^m x+y,
 \alpha+\beta+\overline{T^m x\otimes y}\bigr).
\]
Let
$p\colon G\to\ZZ$, $(k,x,\alpha)\mapsto k$, be the projection onto the first
factor, and set $N:=\Ker p$. Then
\[
G=N\rtimes_\varphi\ZZ,
        \qquad
        \varphi(0,x,\alpha)=(0,T^{-1}x,\alpha),
\]
and
\begin{equation}
        1\longrightarrow S\longrightarrow N\longrightarrow M\longrightarrow1
\label{eq:alexander-central-extension}
\end{equation}
is a central extension. For an abelian group $B$ equipped with an action of
$\varphi$, write
\[
 B_\varphi:=B/(\mathrm{id}_B-\varphi)B,
 \qquad
 B^\varphi:=\Ker(\mathrm{id}_B-\varphi)
\]
for the $\varphi$-coinvariants and $\varphi$-invariants, respectively.

\begin{lemma}
\label{lem:alexander-extension}
The map
\[
 b\colon A\longrightarrow S,
 \qquad
 b(x\wedge y)=\overline{(\mathrm{id}_M-T)x\otimes y},
\]
induces an isomorphism $A/fA\cong S$. Moreover,
$S=[N,N]$, $H_1^{\gr}(N)\cong M$, and, up to sign, the transgression
$H_2^{\gr}(M)=A\to S$ of the extension
\eqref{eq:alexander-central-extension} is $b$.
\end{lemma}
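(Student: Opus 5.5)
The plan is to treat $b$ purely algebraically first, and then read off the group-theoretic statements from a commutator computation in $N$. Throughout, I use the defining relation of $S$ in the form
\[
\overline{z\otimes w}=\overline{Tw\otimes z},\qquad z,w\in M,
\]
which is just the statement that $\mathrm{id}_{M\otimes M}-\tau$ is killed.

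\emph{Step 1: $b$ is well defined.} The assignment $(x,y)\mapsto\overline{(\mathrm{id}_M-T)x\otimes y}$ is bilinear, so it suffices to check that it vanishes on $x\wedge x$. We have $\overline{x\otimes x}=\overline{Tx\otimes x}$ directly from the relation, and hence $\overline{(\mathrm{id}_M-T)x\otimes x}=0$.

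\emph{Step 2: an inverse $S\to A/fA$.} I would define $\beta(\overline{z\otimes w}):=(\mathrm{id}_M-T)^{-1}z\wedge w \bmod fA$, using the invertibility of $\mathrm{id}_M-T$. To see that $\beta$ respects the relation, put $u=(\mathrm{id}_M-T)^{-1}z$ and $v=(\mathrm{id}_M-T)^{-1}w$. Then $z\otimes w-Tw\otimes z$ is sent to
\[
u\wedge(\mathrm{id}_M-T)v-Tv\wedge(\mathrm{id}_M-T)u .
\]
Expanding, the two cross terms $-u\wedge Tv$ and $-Tv\wedge u$ cancel, leaving $u\wedge v-Tu\wedge Tv=f(u\wedge v)\in fA$.

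\emph{Step 3: $b$ and $\beta$ are mutually inverse.} Clearly $\beta\circ b=\mathrm{id}$ on $A$ modulo $fA$. For the other composite, $b\circ\beta(\overline{z\otimes w})=\overline{z\otimes w}$. This composite is surjective onto $S$, so $b$ is surjective, and $\beta$ factoring through $A/fA$ shows that $b$ kills $fA$. Together these give $A/fA\cong S$. If the direct check that $b(fA)=0$ is messy, I would deduce it from $\beta b=\mathrm{id}$ and $b\beta=\mathrm{id}$ in this way rather than compute it.

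\emph{Step 4: commutators in $N$.} On $N$ the multiplication reads
\[
(0,x,\alpha)(0,y,\beta)=\bigl(0,x+y,\alpha+\beta+\overline{x\otimes y}\bigr).
\]
So $S=\{(0,0,\alpha)\}$ is central and $N/S\cong M$. A direct computation gives
\[
[(0,x,0),(0,y,0)]=\pm\bigl(0,0,\overline{x\otimes y}-\overline{y\otimes x}\bigr).
\]
Since $\overline{y\otimes x}=\overline{Tx\otimes y}$, this is $\pm\bigl(0,0,\overline{(\mathrm{id}_M-T)x\otimes y}\bigr)=\pm b(x\wedge y)$. Because $b$ is surjective, $[N,N]\supseteq S$. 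Conversely $N/S\cong M$ is abelian, so $[N,N]\subseteq S$. Hence $S=[N,N]$ and $H_1^{\gr}(N)=N/[N,N]\cong M$.

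\emph{Step 5: the transgression.} For a central extension $1\to S\to N\to M\to1$ with $M$ abelian, the transgression $d_2\colon H_2^{\gr}(M)=\Lambda^2M\to S$ in the five-term sequence is, up to a universal sign, the commutator pairing $x\wedge y\mapsto[\tilde x,\tilde y]$ of lifts. One way to see this is via the Hopf description of $H_2^{\gr}(M)$ and naturality applied to the Heisenberg-type extension over $\ZZ^2\to M$. Combined with Step 4, this identifies the transgression with $b$ up to sign.

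The main obstacle is the bookkeeping of signs and conventions in Step 5. I would handle it by pulling back along $\ZZ^2\to M$, $(1,0)\mapsto x$, $(0,1)\mapsto y$, and using that $H_2^{\gr}(\ZZ^2)$ is generated by the Pontryagin class $\langle\cdot,\cdot\rangle_{\mathrm P}$ of \eqref{eq:Pontryagin-class}. This reduces everything to the single standard commutator computation of Step 4.
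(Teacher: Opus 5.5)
Your proposal is correct and follows the paper's proof essentially step by step. The shared ingredients are:
\begin{itemize}
\item the well-definedness check via $\overline{x\otimes x}=\overline{Tx\otimes x}$;
\item the inverse $\overline{z\otimes w}\mapsto(\mathrm{id}_M-T)^{-1}z\wedge w$, verified by the same identity that produces $f(u\wedge v)$;
\item the same commutator computation in $N$;
\item the identification of the transgression with the commutator pairing, which the paper simply cites from Brown and which you justify by the standard naturality argument.
\end{itemize}

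One inference in Step~3 needs repair. As written, deducing $b(fA)=0$ from ``$b\beta=\mathrm{id}$'' is circular, because $b\circ\beta$ only makes sense once $b$ has descended to $A/fA$. Also, $\beta b=\mathrm{id}$ only gives the opposite inclusion $\Ker b\subseteq fA$. The direct check is one line, as in the paper. Applying the defining relation of $S$ twice gives $\overline{z\otimes w}=\overline{Tz\otimes Tw}$ (that is, $\tau^2=T\otimes T$ acts trivially on $S$). Hence $b(Tx\wedge Ty)=b(x\wedge y)$, and so $bf=0$.

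Alternatively, you can rescue your own idea by working with the lift $z\otimes w\mapsto(\mathrm{id}_M-T)^{-1}z\wedge w\in A$. Applying $b$ to this lift gives $\overline{z\otimes w}$ with no descent needed. Applying $b$ to your Step~2 identity then shows that $b$ kills every $f(u\wedge v)$, since $(\mathrm{id}_M-T)^{-1}$ is surjective.
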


\begin{proof}
The map
$(x,y)\mapsto\overline{(\mathrm{id}_M-T)x\otimes y}$ is biadditive. The
relation $\overline{x\otimes x}=\overline{Tx\otimes x}$ in $S$ implies
\[
 \overline{(\mathrm{id}_M-T)x\otimes x}=0,
\]
so $b$ is well defined on $A=\Lambda^2M$. Moreover, the endomorphism induced
by $\tau$ on $S$ is the identity, while $\tau^2=T\otimes T$. Hence
$b(Tx\wedge Ty)=b(x\wedge y)$, and therefore $bf=0$. Let
$\bar b\colon A/fA\to S$ be the induced map. Put
$R=(\mathrm{id}_M-T)^{-1}$. Then
\[
 \sigma(\overline{x\otimes y})=[Rx\wedge y]
\]
is well defined because
\[
 Rx\wedge y-R(Ty)\wedge x=f(Rx\wedge Ry).
\]
Furthermore,
$\bar b\sigma(\overline{x\otimes y})=\overline{x\otimes y}$ and
$\sigma\bar b(x\wedge y)=[x\wedge y]$. Thus $\sigma$ and $\bar b$ are
mutual inverses.

Using the convention $[g,h]=ghg^{-1}h^{-1}$, one computes
\[
 [(0,x,0),(0,y,0)]
 =(0,0,\overline{x\otimes y-y\otimes x})
 =(0,0,b(x\wedge y)).
\]
Since $b$ is surjective, $S=[N,N]$, and hence $N_{\ab}\cong M$. For a
central extension, the transgression agrees with the commutator map up to
sign \cite[Chapter~VII, \S6]{Brown}.
\end{proof}

\begin{theorem}
\label{thm:alexander-h2-associated}
Under the above assumptions,
\[
H_2^{\gr}(\As(Q_{M,T}))
        \cong fA/f^2A
        \cong
        \frac{\operatorname{Im}(\mathrm{id}_A-\Lambda^2T)}
        {\operatorname{Im}((\mathrm{id}_A-\Lambda^2T)^2)}.
\]
\end{theorem}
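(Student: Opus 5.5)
The plan is to compute $H_2^{\gr}(G)$ in two steps. First I reduce it to the $\varphi$-coinvariants of $H_2^{\gr}(N)$, using the semidirect product $G=N\rtimes_\varphi\ZZ$. Then I analyse $H_2^{\gr}(N)$ through the central extension \eqref{eq:alexander-central-extension}, keeping only the part that survives after passing to $G$.

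\emph{Step 1 (the $\ZZ$-extension).} Since $\ZZ$ has cohomological dimension one, the Lyndon--Hochschild--Serre spectral sequence of $1\to N\to G\to\ZZ\to1$ has only the two columns $p=0,1$. It therefore degenerates to a short exact sequence
\[
 0\longrightarrow H_2^{\gr}(N)_\varphi\longrightarrow H_2^{\gr}(G)\longrightarrow H_1^{\gr}(N)^\varphi\longrightarrow0 .
\]
By Lemma~\ref{lem:alexander-extension}, $H_1^{\gr}(N)\cong M$, and $\varphi$ acts on it by $T^{-1}$. Hence $M^\varphi=\Ker(\mathrm{id}_M-T^{-1})=\Ker\bigl(T^{-1}(T-\mathrm{id}_M)\bigr)=0$, because $\mathrm{id}_M-T$ is invertible. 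The same argument gives $M_\varphi=0$. Thus $H_2^{\gr}(G)\cong H_2^{\gr}(N)_\varphi$, induced by inclusion.

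\emph{Step 2 (the central extension).} For $S\to N\to M$, the spectral sequence filters $H_2^{\gr}(N)$ as follows. The top quotient is $E^\infty_{2,0}$, which equals the kernel of the transgression $H_2^{\gr}(M)=A\to S$. By Lemma~\ref{lem:alexander-extension} this transgression is $\pm b$, and $b$ induces $A/fA\cong S$, so $E^\infty_{2,0}=fA$. The subgroup $F_1$ is built from quotients of $E_{1,1}=M\otimes S$ and $E_{0,2}=H_2^{\gr}(S)$. Everything is natural with respect to $\varphi$, since $\varphi$ preserves $S$ and $N$. Moreover, $\varphi$ acts trivially on $S$ and by $\Lambda^2T^{-1}=L^{-1}$ on $A$. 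Applying the right-exact functor $(-)_\varphi$ to $0\to F_1\to H_2^{\gr}(N)\to fA\to0$ gives an exact sequence
\[
(F_1)_\varphi\to H_2^{\gr}(N)_\varphi\to (fA)_\varphi\to0 .
\]
It remains to show that $F_1$ maps to zero in $H_2^{\gr}(G)$.

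\emph{Step 3 (killing $F_1$).} I expect this to be the main obstacle. The key observation is that $S$ is central in all of $G$, because it is central in $N$ and fixed by $\varphi$. I would first check that the images of $E_{0,2}$ and $E_{1,1}$ in $H_2^{\gr}(N)$ are spanned by Pontryagin products $\langle s,h\rangle_{\mathrm P}$ with $s\in S$ and $h\in N$. This uses the Künneth description of the homology of the abelian group $S$, together with the fact that the $E_{1,1}$ classes come from $S\times\langle h\rangle\to N$. As noted in the paper before \eqref{eq:Pontryagin-class}, for central $s$ the map $h\mapsto\langle s,h\rangle_{\mathrm P}\in H_2^{\gr}(G)$ factors through $G_{\ab}$. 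Now $G_{\ab}$ is $\ZZ$ times a quotient of $M_\varphi=0$, since $S=[N,N]$. Hence the image of $h\in N$ in $G_{\ab}$ is trivial, and all such classes vanish in $H_2^{\gr}(G)$. If the identification of $E_{1,1}$ and $E_{0,2}$ classes with Pontryagin products proves awkward, I would instead use the map of extensions from $S\times\langle h\rangle$ into $N$ and naturality.

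\emph{Conclusion.} Steps 1--3 give $H_2^{\gr}(G)\cong(fA)_\varphi=fA/(\mathrm{id}_A-L^{-1})fA$. Finally, $\mathrm{id}_A-L^{-1}=-L^{-1}f$, and $L^{-1}$ is an automorphism commuting with $f$. Therefore $(\mathrm{id}_A-L^{-1})fA=f^2A$, which yields $H_2^{\gr}(G)\cong fA/f^2A$.
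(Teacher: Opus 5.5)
Your proposal is correct. Its overall architecture matches the paper's: the two Lyndon--Hochschild--Serre spectral sequences, $H_2^{\gr}(G)\cong H_2^{\gr}(N)_\varphi$, $E^\infty_{2,0}=\Ker b=fA$, and $(fA)_\varphi=fA/f^2A$. The genuine difference is how you dispose of the filtration piece $F_1$.

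\textbf{The paper's route.} The paper kills $F_1$ in two stages.
\begin{enumerate}
\item It shows $E^\infty_{0,2}=0$ already inside $H_2^{\gr}(N)$. The group $H_2^{\gr}(S)=\Lambda^2S$ is spanned by products $\langle s_1,s_2\rangle_{\mathrm P}$ with $s_1\in Z(N)$. These factor through $N_{\mathrm{ab}}$, and $S=[N,N]$ dies there.
\item The remaining quotient of $M\otimes S$ then vanishes on passing to $\varphi$-coinvariants. Indeed, $\varphi$ acts there by $T^{-1}\otimes\mathrm{id}_S$, and $(\mathrm{id}_M-T^{-1})\otimes\mathrm{id}_S$ is invertible.
\end{enumerate}
This never needs an explicit description of the $E_{1,1}$ classes; only equivariance of the spectral sequence is used.

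\textbf{Your route.} You use a single mechanism for all of $F_1$. Since $S$ is central in $G$ and $N$ maps to zero in $G_{\mathrm{ab}}\cong\ZZ$, every $\langle s,h\rangle_{\mathrm P}$ with $s\in S$, $h\in N$ vanishes in $H_2^{\gr}(G)$. This is the paper's Pontryagin-product trick moved from $N$ up to $G$. It spares you computing the $\varphi$-action on $E_{1,1}$. The cost is an extra lemma: $F_1$ is generated by such products.

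\textbf{Proving that lemma.} Only the $E_{0,2}$ part follows from the K\"unneth description of $H_2^{\gr}(S)$; it does not reach the $E_{1,1}$ classes of the nonsplit extension $N$. Use your fallback argument for those.
\begin{itemize}
\item Take the homomorphism $S\times\ZZ\to N$, $(s,k)\mapsto sh^k$. Use the external product here, since $\langle h\rangle$ may meet $S$.
\item This is a map of central extensions lying over $\ZZ\to M$, $1\mapsto\bar h$.
\item Naturality, together with the K\"unneth description of the split extension, shows that $\langle s,h\rangle_{\mathrm P}\in F_1$ maps to the class of $\pm\,\bar h\otimes s$ in $E^\infty_{1,1}$.
\end{itemize}
These classes generate $E^\infty_{1,1}$, so $F_1$ is generated by Pontryagin products with a central first entry, as you need.
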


\begin{proof}
Consider the Lyndon--Hochschild--Serre spectral sequence of the central
extension~\eqref{eq:alexander-central-extension}. By
Lemma~\ref{lem:alexander-extension}, one has $d^2_{2,0}=\pm b$, and hence
$E^\infty_{2,0}=\Ker b=fA$. The homomorphism
$H_2^{\gr}(S)\to H_2^{\gr}(N)$ induced by inclusion is zero. Indeed,
$H_2^{\gr}(S)=\Lambda^2S$ is generated by the Pontryagin products
$\langle s_1,s_2\rangle_{\mathrm P}$. As observed immediately after
\eqref{eq:Pontryagin-class}, for fixed $s_1\in S\subset Z(N)$ the map
$s_2\mapsto\langle s_1,s_2\rangle_{\mathrm P}$ factors through $N_{\ab}$.
Since $S=[N,N]$, every such generator maps to zero. The image of this edge
homomorphism is the filtration subgroup
$F_0H_2^{\gr}(N)\cong E^\infty_{0,2}$; hence $E^\infty_{0,2}=0$, and the
filtration gives a
$\varphi$-equivariant exact sequence
\[
 M\otimes S\longrightarrow H_2^{\gr}(N)
 \longrightarrow fA\longrightarrow0.
\]

The automorphism $\varphi$ acts as $T^{-1}\otimes\mathrm{id}_S$ on
$M\otimes S$ and as $L^{-1}$ on $fA$. Since
$\mathrm{id}_M-T^{-1}$ is invertible, the endomorphism
$\mathrm{id}_{M\otimes S}-\varphi=(\mathrm{id}_M-T^{-1})\otimes\mathrm{id}_S$
is an automorphism. Hence $(M\otimes S)_\varphi=0$. Right exactness of
coinvariants therefore gives
\[
 H_2^{\gr}(N)_\varphi\cong(fA)_\varphi.
\]
On the other hand, the Lyndon--Hochschild--Serre spectral sequence for
$N\to G\to\ZZ$ yields
\[
 0\longrightarrow H_2^{\gr}(N)_\varphi
 \longrightarrow H_2^{\gr}(G)
 \longrightarrow H_1^{\gr}(N)^\varphi
 \longrightarrow0.
\]
Since $H_1^{\gr}(N)\cong M$ and
$M^{T^{-1}}=\Ker(\mathrm{id}_M-T^{-1})=0$, it follows that
\[
 H_2^{\gr}(G)\cong(fA)_\varphi
 =fA/(\mathrm{id}_A-L^{-1})fA=fA/f^2A.
\]
The last equality follows from
$\mathrm{id}_A-L^{-1}=-L^{-1}f$ and
$L^{-1}(f^2A)=f^2A$.
\end{proof}
We give an examples of Alexander quandles with $  H_2^{\gr}\bigl(\As(X_{\ell,r})\bigr) \neq 0. $
\begin{corollary}[A family with nontrivial second group homology]
\label{cor:alexander-nonvanishing-family}
Let $\ell\ge3$ be odd and let $r\ge3$. Put
$M=(\ZZ/\ell)^r$, choose a basis $e_1,\ldots,e_r$, and define
$T\in\operatorname{Aut}(M)$ by
\[
 Te_1=-e_1,\qquad
 Te_2=e_1-e_2,\qquad
 Te_j=-e_j\quad(3\le j\le r).
\]
Then $X_{\ell,r}:=Q_{M,T}$ is a finite connected Alexander quandle of
cardinality $\ell^r$, and
\[
 H_2^{\gr}\bigl(\As(X_{\ell,r})\bigr)
 \cong(\ZZ/\ell)^{r-2}, \qquad \qquad 
 H_2^\Q(X_{\ell,r})
 \cong(\ZZ/\ell)^{\binom r2-r+2}.
\]
In particular, the second homology of the associated group need not vanish,
even for finite connected Alexander quandles.
\end{corollary}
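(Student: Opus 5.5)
The plan is to apply Theorem~\ref{thm:alexander-h2-associated} directly, together with the cited formula $H_2^\Q(X)\cong A/fA$ from \cite[Corollary~4.3]{BIMNP}. Both apply once $\mathrm{id}_M-T$ is invertible. After that, everything reduces to linear algebra over $\ZZ/\ell$ for the operator $f=\mathrm{id}_A-\Lambda^2T$ on $A=\Lambda^2M\cong(\ZZ/\ell)^{\binom r2}$.

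\textbf{Hypotheses and connectedness.} First I check the hypotheses. In the basis $e_1,\ldots,e_r$ we have
\[
(\mathrm{id}_M-T)e_1=2e_1,\qquad (\mathrm{id}_M-T)e_2=2e_2-e_1,\qquad (\mathrm{id}_M-T)e_j=2e_j\quad(j\ge3).
\]
This matrix is triangular with all diagonal entries $2$. Since $\ell$ is odd, it is invertible over $\ZZ/\ell$. Clearly $T$ is an automorphism. The quandle $Q_{M,T}$ is connected because the orbit of $0$ contains $0\lhd y=(\mathrm{id}_M-T)y$, which ranges over all of $M$. Its cardinality is $|M|=\ell^r$.

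\textbf{Computing $f$.} Next I write $T=-(\mathrm{id}_M-N)$, where $N$ is the nilpotent map with $Ne_2=e_1$ and $Ne_i=0$ for $i\ne2$, so that $N^2=0$. The sign cancels in the exterior square, which gives
\[
\Lambda^2T(e_i\wedge e_j)=(e_i-Ne_i)\wedge(e_j-Ne_j).
\]
Reading this off on the basis $e_i\wedge e_j$ with $i<j$:
\begin{itemize}
\item if $2\notin\{i,j\}$, then $\Lambda^2T$ fixes $e_i\wedge e_j$;
\item $\Lambda^2T(e_1\wedge e_2)=e_1\wedge(e_2-e_1)=e_1\wedge e_2$;
\item for $j\ge3$, $\Lambda^2T(e_2\wedge e_j)=e_2\wedge e_j-e_1\wedge e_j$.
\end{itemize}
Hence $f$ kills every basis vector except the $e_2\wedge e_j$ with $j\ge3$, and it sends each of these to $e_1\wedge e_j$. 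So
\[
fA=\bigoplus_{j\ge3}(\ZZ/\ell)\,e_1\wedge e_j\cong(\ZZ/\ell)^{r-2},
\]
and $fA$ is spanned by the basis vectors $e_1\wedge e_j$ in the kernel of $f$. It follows that $f^2=0$.

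\textbf{Conclusion.} Theorem~\ref{thm:alexander-h2-associated} then gives $H_2^{\gr}(\As(X_{\ell,r}))\cong fA/f^2A=fA\cong(\ZZ/\ell)^{r-2}$, which is nonzero because $r\ge3$. For the quandle homology, $fA$ is spanned by a subset of the standard basis of $A$. Therefore $A/fA$ is free over $\ZZ/\ell$ of rank $\binom r2-(r-2)$, and this yields the stated $H_2^\Q$.

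\textbf{Expected obstacle.} There is no real obstacle. The only care needed is the sign bookkeeping in $\Lambda^2T$: one must check that the overall sign $(-1)^2$ disappears and that the $Ne_i\wedge Ne_j$ term never contributes, since $N$ has rank one.
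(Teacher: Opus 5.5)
Your proposal is correct and is essentially the paper's proof. The paper also writes $T=-\mathrm{id}_M+D$, where $D$ is your nilpotent $N$ (so $D^2=0$), and deduces that $\mathrm{id}_M-T=2\,\mathrm{id}_M-D$ is invertible because $\ell$ is odd. It then computes that $f$ sends $e_2\wedge e_j\mapsto e_1\wedge e_j$ for $j\ge3$ and kills every other basis vector, giving $fA\cong(\ZZ/\ell)^{r-2}$ and $f^2A=0$, and concludes via Theorem~\ref{thm:alexander-h2-associated} together with $H_2^\Q(X)\cong A/fA$; your orbit-of-$0$ argument for connectedness just makes explicit a step the paper leaves unstated.
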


\begin{proof}
Let $D:=T+\mathrm{id}_M$. Then $D(e_2)=e_1$, while $D(e_i)=0$ for
$i\ne2$, and hence $D^2=0$. Thus $T=-\mathrm{id}_M+D$ is invertible.
Since $\ell$ is odd, multiplication by $2$ is invertible on $M$, and
$
 \mathrm{id}_M-T=2\mathrm{id}_M-D
$
is also invertible. Therefore $X_{\ell,r}$ is connected.

Let $A=\Lambda^2M$ and $f=\mathrm{id}_A-\Lambda^2T$. The elements
$e_i\wedge e_j$, $1\le i<j\le r$, form a basis of $A$ over $\ZZ/\ell$.
For $j\ge3$, one has
\[
 (\Lambda^2T)(e_2\wedge e_j)
 =(-e_2+e_1)\wedge(-e_j)
 =e_2\wedge e_j-e_1\wedge e_j,
\]
whereas every other basis element $e_i\wedge e_j$ is fixed by
$\Lambda^2T$. Consequently,
\[
 f(e_2\wedge e_j)=e_1\wedge e_j\quad(3\le j\le r),
 \qquad
 f(e_i\wedge e_j)=0
\]
for all remaining pairs $i<j$. Hence
\[
 fA=\bigoplus_{j : 3\leq j \leq r}(\ZZ/\ell)(e_1\wedge e_j),
 \qquad
 f^2A=0.
\]
Theorem~\ref{thm:alexander-h2-associated} now gives
$
 H_2^{\gr}\bigl(\As(X_{\ell,r})\bigr)
 \cong fA/f^2A
 \cong(\ZZ/\ell)^{r-2}.
$
Finally, the isomorphism $H_2^\Q(X_{\ell,r})\cong A/fA$ gives the stated
formula for the second quandle homology.
\end{proof}

\section*{Acknowledgments}
The author thanks Toshiyuki Akita and Kakeru Shikata for helpful discussions
and advice concerning this work.

\end{document}